\theoremstyle{plain}
\newtheorem{theorem}{Theorem}
\newtheorem{lemma}[theorem]{Lemma}
\newtheorem{corollary}[theorem]{Corollary}
\newtheorem{proposition}[theorem]{Proposition}
\theoremstyle{definition}
\newtheorem*{remark}{Remark}
\newenvironment{namedassumption}[1]{
  
  \assumption}{\endassumption}
\subjclass[2010]{
  60F17% Functional limit theorems; invariance principles
, 60G18% Self-similar processes
, 60J80% Branching processes
, 92D25% Population dynamics
}
\title[Local extinctions of branching processes with immigration: scaling limits]{Critical branching processes with immigration: \\
 scaling limits of local extinction sets
}
\author{Aleksandar Mijatovi\'c$^{1,3}$}
\address{1: Department of Statistics, The University of Warwick}
\address{2: Instituto de Matem\'aticas, Universidad Nacional Aut\'onoma de M\'exico}
\address{3: The Alan Turing Institute}
\author{Benjamin Povar$^1$}
\author{Ger\'onimo {Uribe Bravo}$^{1,2}$}
\newcommand{\cond}[2]{\left.\vphantom{#2}#1\ \right| #2}
\DeclareMathOperator{\cbi}{\ensuremath{CBI}}
\DeclareMathOperator{\bgwi}{\ensuremath{BGWI}}
\DeclareMathOperator{\Beta}{B}
\newcommand{\na}{\ensuremath{\mathbb{N}}}
\newcommand{\indi}[1]{\si_{#1}} \newcommand{\si}{{\ensuremath{\bf{1}}}}
\newcommand{\re}{\ensuremath{\mathbb{R}}}
\newcommand{\set}[1]{\ensuremath{\left\{ #1\right\} }}
\newcommand{\proba}[2][{ }]{\ensuremath{\mathbb{P}_{{#1}}\! \left( #2 \right)}}
\newcommand{\paren}[1]{\left(#1\right)}
\newcommand{\bra}[1]{\left[#1\right]}
\newcommand{\esp}[1]{\ensuremath{\mathbb{E}\! \left( #1 \right)}}
\newcommand{\espc}[2]{\ensuremath{\imf{\mathbb{E}}{\cond{#1}{#2}}}}
\DeclarePairedDelimiter\ceil{\lceil}{\rceil}
\DeclarePairedDelimiter\floor{\lfloor}{\rfloor}
\newcommand{\p}{\ensuremath{ \mathbb{P}}}
\newcommand{\cadlag}{c\`adl\`ag}
\newcommand{\eps}{\ensuremath{ \varepsilon}}
\newcommand{\abs}[1]{\hspace{.25mm}\left|#1\right|\hspace{.25mm}}
\newcommand{\imf}[2]{\ensuremath{#1\!\paren{#2}}}
\date{\today}
\begin{document}
\begin{abstract} 
We establish the joint scaling limit of a critical Bienaym\'e-Galton-Watson process with immigration (BGWI) 
and its (counting) local time at zero
to the corresponding self-similar continuous-state branching process with immigration (CBI) 
and its (Markovian) local time at zero
for balanced offspring and immigration laws  in stable domains of attraction.
Using a general framework for invariance principles of local times~\cite{MR4463082}, the problem reduces to the  
analysis of the structure of excursions from zero and positive levels, together with
the weak convergence of the hitting times of points of the BGWI to those of the CBI. 
A key step in the proof of our main limit theorem is a novel Yaglom limit for the law at time $t$ of an excursion  with lifetime exceeding $t$ of a scaled  infinite-variance critical BGWI. 

Our main result implies a joint septuple scaling limit of BGWI $Z_1$, its local time at $0$, the random walks $X_1$ and $Y_1$ associated to the reproduction and immigration mechanisms, respectively, the counting local time at $0$ of $X_1$, an additive functional of $Z_1$ and $X_1$  evaluated at this functional. In the septuple  limit, four different scaling sequences are identified and given explicitly in terms of the offspring generating function (modulo asymptotic inversion), the local extinction probabilities of the BGWI and the tails of return times to zero of $X_1$.
\end{abstract}
\maketitle

\section{Introduction and main results}
\label{sectionIntroduction}
\subsection{Scaling limits of the critical BGWIs and their counting local times at zero}
It is 
known that the scaling limits of Bienaymé-Galton-Watson processes with immigration ($\bgwi$) 
are self-similar continuous-state branching processes with immigration ($\cbi$), 
see \cite{MR0290475,MR2225068}, \cite{MR3098685} and Theorem~\ref{septupleLimitTheorem} below. 
However, little is known about the convergence of natural functionals of the $\bgwi$ processes,
such as their local times 
(as representative aspects of the structure of their zero sets), 
to their continuum analogues. 
The main aim of this paper is to provide results in this direction. 
The setup naturally includes 
branching processes with infinite immigration mean and
critical reproduction mechanism with infinite  offspring variance. 
Some of our results are well known and classical in the finite-variance setting without immigration  
 (known by names such as Feller's diffusion, Kolmogorov's estimate or Yaglom's limit).  
In particular,
 the convergence of one-dimensional distributions has been investigated in \cite{MR0290475} and,   conditioned on non-extinction in the finite variance case,  in \cite{MR0266320} 
and \cite{MR451433}. 
For the analysis of the asymptotic tails of the life spans of BGWI processes see~\cite{MR0300351},  \cite{MR0688076}, \cite{MR451433},\cite{MR0776902}. 
The cases of infinite offspring variance  or infinite immigration mean  appear not to have been analysed as much. 
More recent results, still in the finite-variance reproduction case, 
have considered random environment or state-dependent BGW processes, as in \cite{MR3834847,MR4094392,MR4259452}. 
On the other hand, limit theorems for local times have been studied for random walks 
or reflected random walks 
(as in \cite{MR665738}, \cite{MR636771}, \cite{MR749918}, \cite{MR2663630}, \cite{MR3485370}). 
More recently, 
a general framework to obtain invariance principles for local times was put forth in \cite{MR4463082}; 
it is this general framework that we will apply in the setting of branching processes with immigration.

Let $Z_1$ denote a $\bgwi$ process started at $0$ with the offspring and immigration distributions $\mu$ and $\nu$ on $\na\coloneqq\{0,1,\ldots\}$, respectively (see~\eqref{eq_discreteLampertiTransformation_1} below for a definition of BGWI).
We assume throughout the paper 
$\mu(1)<1$
and 
$\nu(0)>0$,
implying that the reproduction mechanism is not deterministic and that $Z_1$ 
may return zero. 
Following \cite{MR0096321,MR0228077, MR0290475}, we now introduce Assumption~\ref{assumption_SL}.

\begin{namedassumption}{\textbf{(SL)}}\label{assumption_SL}
%The offspring and immigration distributions $\mu$ and $\nu$ on $\na$ satisfy the following:
%$\mu$ is non-trivial, 
%i.e. $\mu(1)<1$,  
%and 
The generating functions
\[
    f(s)\coloneqq \sum_{n\in\na}s^n \mu(n)\quad\text{and}\quad g(s)\coloneqq \sum_{n\in\na}s^n \nu(n),\qquad s\in[0,1],
\]
of the offspring and immigration distributions $\mu$ and $\nu$ 
take the form
\begin{equation}
    \label{eq:critical_gen_functions}
	f(s)=s+c(1-s)^{1+\alpha}\mathcal{l}(1-s)
	\quad\text{and} \quad
	g(s)=1-d(1-s)^\alpha\mathcal{k}(1-s),\quad s\in (0,1), 
\end{equation}    
for some constants $c,d>0$ and $\alpha\in (0,1]$ 
and functions $\mathcal{l},\mathcal{k}:(0,1)\to(0,\infty)$, slowly varying at zero, with $\mathcal{l}(s)/\mathcal{k}(s)\to 1$ as $s\to 0$.
\end{namedassumption}

As shown in \cite[Thm~2.3]{MR0290475}, \cite{MR2225068}, \cite[Thm~5.8]{MR4339413} or \cite{MR3098685} 
(see also the proof of Theorem~\ref{septupleLimitTheorem} below), 
Assumption~\ref{assumption_SL} is equivalent to the existence of a scaling limit for $Z_1$, which we now describe.
Define a scaling sequence $(b_n)$
via asymptotic inversion  of the regularly varying function $x\mapsto x^\alpha/\mathcal{l}(1/x)$ at infinity~\cite[Thm~1.5.12]{MR898871}. Thus $(b_n)$ is regularly varying with index $1/\alpha$
and satisfies 
$b_n^\alpha /(n\mathcal{l}(1/b_n))\to1$  as $n\to\infty$, denoted  throughout the paper  by
\begin{equation}
    \label{equationDefbn}
    b_n^\alpha \sim n\mathcal{l}(1/b_n)\quad\text{as $n\to\infty$.}
\end{equation}
For  $u\in \re$, let
$\lfloor u\rfloor\coloneqq \max\{n\in\mathbb{Z}:n\leq u\}$.
The continuous-time extension of the scaled $\bgwi$, 
\begin{equation}
\label{eq:cont_time_bgwi}
        \frac{1}{b_n} Z_1(\floor{n\cdot}),
\end{equation}
converges weakly as $n\to\infty$ in Skorokhod space (of \cadlag\  functions mapping $[0,\infty)$ into itself)   to a process $Z$, which is a self-similar continuous-state branching process with immigration~(CBI) of index $\alpha$
started at $0$ (see~\eqref{eq_LampertiTransformation} below for a definition of a CBI process).\footnote{See \cite{MR4339413} for an introduction to $\cbi$ processes. 
The weak convergence $Z_1(\floor{n\cdot})/b_n\stackrel{d}{\to} Z$ follows from the finite dimensional convergence \cite[Thm~2.3]{MR0290475} and the tightness of \cite{MR2225068}. 
\cite{MR0290475}  also establish the necessity of Assumption~\ref{assumption_SL} for the finite dimensional distributions of $Z_1(\floor{n\cdot})/b_n$ to converge weakly. The limit $Z_1(\floor{n\cdot})/b_n\stackrel{d}{\to} Z$ also follows under Assumption~\ref{assumption_SL} as part of the time-change coupling construction in Subsection~\ref{subsec:septuple_lim_thm} below, given before the proof of our septuple limit result (see  Theorem~\ref{septupleLimitTheorem}).}
The law of the self-similar CBI $Z$ is
characterised by the Laplace transform of its one-dimensional distributions with arbitrary initial state $z\in[0,\infty)$:
\begin{equation}
\label{equationLaplaceTransformOfCBI}
	\imf{\mathbb{E}_z}{e^{-\lambda Z(t)}}= (1+\alpha c \lambda^\alpha t)^{-\delta} 
        e^{-\frac{\lambda z}{(1+\alpha c \lambda^\alpha t)^{1/\alpha}}},\quad\text{ where }
        \delta\coloneqq\frac{d}{\alpha c}. 
\end{equation}
The finite-variance case $\alpha=1$ is special in that $Z$ has continuous sample paths; 
indeed, when $c=2$ it is a squared Bessel process of dimension $2\delta$ and a multiple of it otherwise.

Our main result gives a scaling limit of the zero (or local extinction) set $\set{m \in\na: Z_1(m) = 0}$  of $Z_1$, 
encoded via the counting 
local time $L_1$ of $Z_1$ at $0$, 
 defined for any $t\geq 0$ as
\begin{equation}
\label{equationDefiningDiscreteLocalTime}
L_1(t)  \coloneqq  \left|\set{m \in\na: Z_1(m) = 0}\cap [0, t]\right|.
\end{equation}
The main aim is to prove that the scaling limit of $L_1$ is the Markovian local time 
$L$ at $0$ of the self-similar CBI $Z$ started at $0$.
This local time ${L}$ 
is non-trivial if and only if $\delta \in (0,1)$ (see \cite[\S 5.2.1]{MR3263091}). 
Indeed, recall that $Z$ is point recurrent at $0$ if and only if $\delta \in (0,1)$.  
If $\delta\geq 1$, 
$Z$ does not return to $0$ at positive times. 
In the particular case of a squared Bessel process, i.e. $\alpha=1$ and $c=2$, this dichotomy is manifested through its dimension $2\delta$.
%$2\delta<2$ (resp. $2\delta\geq 2$) implies point recurrence (resp. polarity).

When $\delta\in (0,1)$, the right continuous inverse of ${L}$ is a stable subordinator of index $1-\delta$. 
Our main result 
adds a limit theorem to 
the celebrated determination of the structure of the zero set of a squared Bessel process~\cite{MR0247668}, 
corresponding to the special case $\alpha=1$ and $c=2$ of the preceding discussion. 

\begin{theorem}
\label{theorem_Local_time_conv}
Let Assumption~\ref{assumption_SL} hold with 
$\delta=\frac{d}{\alpha c}\in (0,1)$.
Then, for any 
$\kappa>0$, 
the sequence $(c_n)$ given by 
\begin{equation}
\label{eq_theorem_Local_time_conv_c_n}
c_n \coloneqq \kappa n \proba{Z_1(n)=0},  \quad n\in\na,
\end{equation}
is regularly varying of index $1-\delta$. Furthermore,  there exists $\kappa$ so that the weak convergence
\begin{equation}
\label{eq_theorem_Local_time_conv_joint_conv}
    \paren{
    \frac{1}{b_n}Z_1(\floor{n\cdot}),\frac1{c_n}{L_1(\floor{n\cdot})}
    } 
    \stackrel{d}{\to } (Z, {L}) \ \text{as} \ n\to\infty
\end{equation}holds in the product Skorokhod topology. 
\end{theorem}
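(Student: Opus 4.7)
The plan is to verify the hypotheses of the invariance-principle framework for local times of \cite{MR4463082}, taking as basic input the process convergence $Z_1(\floor{n\cdot})/b_n\stackrel{d}{\to} Z$ from Theorem~\ref{septupleLimitTheorem}. The first task is to establish the regular variation of $(c_n)$ via precise asymptotics of $\mathbb{P}(Z_1(n)=0)$. Starting from the BGWI factorisation
\[
\mathbb{P}(Z_1(n)=0)=\prod_{j=0}^{n-1}g\bigl(f^{\circ j}(0)\bigr),
\]
valid when the process is started at $0$, a Slack-type iteration of $u_j:=1-f^{\circ j}(0)$ under Assumption~\ref{assumption_SL} gives $u_j^{-\alpha}\sim \alpha c j\,\mathcal{l}(u_j)$ as $j\to\infty$, while the relation $g(1-u)=1-d u^{\alpha}\mathcal{k}(u)$ together with $\mathcal{l}/\mathcal{k}\to 1$ yields $\log g(f^{\circ j}(0))\sim -\delta/j$. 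Summing shows that $\mathbb{P}(Z_1(n)=0)$ is regularly varying of index $-\delta$, so $(c_n)$ is regularly varying of index $1-\delta$ for every $\kappa>0$.

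Next I would supply the two structural ingredients required by the framework of \cite{MR4463082}: weak convergence of the first return time to $0$ of $Z_1/b_n$ to that of $Z$, and a description of the conditional law of the excursion of $Z_1$ away from $0$ at a fixed scaled time. The first ingredient follows from $Z_1/b_n\stackrel{d}{\to}Z$ combined with the fact that, when $\delta<1$, $Z$ hits $0$ at a dense set of times and its hitting functional at $0$ is almost surely continuous. The second, more delicate ingredient is the novel Yaglom-type statement highlighted in the abstract: for each fixed $t>0$, the conditional law of $b_n^{-1}Z_1(\floor{nt})$ given that the excursion straddling $\floor{nt}$ has not yet ended converges to the entrance law at time $t$ of the It\^o excursion measure of $Z$ away from $0$. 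I would prove this through the conditional Laplace transform, computed from the factorisation above by evaluating $\mathbb{E}_0\bigl[s^{Z_1(\floor{nt})}\bigr]$ and dividing by the probability that the straddling excursion has not ended (itself regularly varying by the Slack-type estimates), and then passing to the limit via Karamata-type Tauberian arguments. The key novelty here is the simultaneous presence of the infinite-variance generating function $f$ and the heavy-tailed immigration factor $g$, an interplay absent from the classical Yaglom theory for pure BGW processes.

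With these ingredients in place, the framework of \cite{MR4463082} yields tightness of $c_n^{-1}L_1(\floor{n\cdot})$ and identifies any weak limit as a non-decreasing continuous process whose right-continuous inverse is the $(1-\delta)$-stable subordinator encoding the excursion lengths of $Z$ at $0$; this is precisely the Markovian local time $L$. Joint convergence with $Z_1(\floor{n\cdot})/b_n$ is built into the excursion decomposition of \cite{MR4463082}, so the Skorokhod-topology statement in~\eqref{eq_theorem_Local_time_conv_joint_conv} follows. The constant $\kappa$ is pinned down by matching a single one-dimensional marginal: the Darling--Kac theorem applied to the null-recurrent chain $Z_1$, whose return-time tail is regularly varying of index $\delta-1$ by the first step, yields Mittag-Leffler convergence of $L_1(n)/c_n$ at a deterministic time, and there is a unique $\kappa$ so that this Mittag-Leffler limit coincides in law with $L(1)$. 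The principal obstacle, in my view, lies in the Yaglom step: combining an infinite-variance critical reproduction law with a (possibly infinite-mean) immigration law requires a delicate joint control of the Slack-type asymptotics of $f^{\circ j}$ and of the generating function $g$, which, to my knowledge, has not appeared in the literature in this combined form.
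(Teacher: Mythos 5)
Your first step, establishing the regular variation of $(c_n)$ via the factorisation $\proba{Z_1(n)=0}=\prod_{j<n}g(f^{\circ j}(0))$ and Slack-type iteration, is correct and matches the paper's Lemma~\ref{lemmaPreLimitAsymptotics}. Your identification of the Yaglom limit (the conditional law of $b_n^{-1}Z_1(\floor{nt})$ given the straddling excursion has not ended) as the key novelty is also correct, and your proposed route through conditional Laplace transforms and Tauberian estimates is essentially the paper's approach in Theorem~\ref{lemmaYaglomLimit}.

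However, there is a genuine and central gap. You claim that the convergence of the first return time to $0$ of $Z_1/b_n$ to that of $Z$ ``follows from $Z_1/b_n\stackrel{d}{\to}Z$ combined with the fact that \ldots\ its hitting functional at $0$ is almost surely continuous.'' This is precisely what the paper shows is \emph{not} automatic. Because the CBI limit $Z$ is non-negative (so $0$ is a boundary point, not an interior value, and $Z$ is not downwards regular at $0$) and the BGWI is not downwards skip-free, the map sending a trajectory to its hitting time of $\{0\}$ is \emph{not} almost surely continuous at the paths of $Z$ with respect to the Skorokhod topology. The example in Appendix~\ref{app:example_non_conv_local_time} exhibits a sequence of critical BGWIs that converges to the very same self-similar CBI $Z$ and whose counting local times at $0$ fail to converge to the Markov local time $L$. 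If continuity of the hitting functional at $0$ held, that example could not exist. The paper therefore must do substantial additional work: it introduces the $\varepsilon$-level excursion endpoints $g_t^\varepsilon$, $d_t^\varepsilon$, proves a deterministic Skorokhod-space lemma (Lemma~\ref{lemmaSkorokhodSpaceHittingTimeOfOpenSet}) for hitting times of \emph{open} sets $[0,\varepsilon)$ whose boundary the limit process leaves regularly, and then controls the gap between $d_t^\varepsilon$ and $d_t$ (and between $g_t^\varepsilon$ and $g_t$) purely distributionally, via a local arcsine limit theorem, convergence of BGW and BGWI extinction times from positive starting points, and the characterisation of the conditional law of $Z(t)$ given $g_t(Z)=s$ (Proposition~\ref{lemmaCBIExcursionStraddlingt}). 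None of this is a routine continuity argument, and it is the technical heart of the proof. Your Darling--Kac route to pinning down $\kappa$ is a plausible alternative to the paper's identification of the asymptotic equivalence $c_n\sim\tilde c_n$ via a Tauberian theorem, but it cannot rescue the convergence-of-hitting-times step, which must be argued from scratch.
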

\begin{remark}%\label{remark_theorem_MUB22_multiplicative_constant}
\noindent (a) Both the scaling sequence $\paren{c_n}$ in Theorem~\ref{theorem_Local_time_conv} 
and the Markov local time $L$ of $Z$
are defined up to positive multiplicative constants only. 
Thus, fixing the scaling sequence determines the multiplicative constant in the definition of the local time $L$ and vice versa. 

\noindent (b) As shown in Proposition~\ref{porp:A1} of Appendix~\ref{subsec:equivalence_SL_SL'} below,  in the case $\alpha\in (0,1)$, Assumption~\ref{assumption_SL} is equivalent to the following: 
the offspring distribution
$\mu$ is \emph{critical} (i.e.~has mean one), 
its tail $\overline \mu(k)\coloneqq \mu((k,\infty)\cap\na)$ is regularly varying with index $-(1+\alpha)$ and  the following tail balance condition holds:
\[
\frac{\overline \nu(k)}{k \overline \mu(k)}\text{ converges to a limit in $(0,\infty)$ as $k\to\infty$.}
\]
The tail $\overline \nu(k)\coloneqq \nu((k,\infty))$ is also regularly varying  of index $-\alpha$, placing $\mu$ and $\nu$ in the $(1+\alpha)$-stable and $\alpha$-stable domains of attraction, respectively, and making Theorem~\ref{theorem_Local_time_conv} applicable to the entire class.

\noindent (c) Any critical finite-variance offspring generating function satisfies Assumption~\ref{assumption_SL}
with $\alpha=1$, in which case 
the slowly varying function $\mathcal{l}$  is bounded on a neighbourhood of zero. Moreover, in this case, the corresponding immigration has finite mean and $\mathcal{k}$ is likewise locally bounded at zero (see Lemma~\ref{lemma_infinite_activity} 
in Appendix~\ref{app:alpha=1} below). 
Thus Theorem~\ref{theorem_Local_time_conv} covers all critical BGWIs with finite-variance offspring distributions. 
Note further that Assumption~\ref{assumption_SL} with $\alpha=1$ does not require the offspring distribution to have finite variance (see the Remark 
in Appendix~\ref{app:alpha=1} below). This case is also covered by Theorem~\ref{theorem_Local_time_conv}.

\noindent (d) The convergence of the counting local time in~\eqref{eq_theorem_Local_time_conv_joint_conv} is fragile. The  example in 
Appendix~\ref{app:example_non_conv_local_time} below gives a convergent sequence of BGWIs, with the same reproduction mechanism as  $(Z_1(\floor{n\cdot})/b_n)_{n\geq1}$  and only a local perturbation of the immigration law $\nu$ at $0$, such that its limit is the self-similar CBI process $Z$ (as in  Theorem~\ref{theorem_Local_time_conv}) but  the corresponding counting local times are proved not to converge to the Markov local time of $Z$ at zero.
\end{remark}

We will obtain the joint convergence of the local time process, together with the scaled BGWI process, using the recently established general framework of \cite{MR4463082}.
We now state a simplified version of~\cite[Thm~1]{MR4463082} in the context of the models studied in the present paper, which
satisfy Assumption~\ref{assumption_SL}  with $\delta=\frac{d}{\alpha c}\in(0,1)$.
Note that the convergence of local time in Theorem~\ref{thmLocalTimeLimitTheoremFromMUB22} below is  in probability. 

\begin{theorem}[{\cite[Thm 1]{MR4463082}}]
\label{thmLocalTimeLimitTheoremFromMUB22}
\label{theorem_MUB22}
Let Assumption \ref{assumption_SL} hold 
with $\delta=\frac{d}{\alpha c}\in(0,1)$ and the  sequence $(b_n)$ satisfy~\eqref{equationDefbn}.
Let $(Z_n)$ be a sequence of continuous-time stochastic processes on the same probability space as  $Z$, such that $Z_n\stackrel{d}{=}Z_1(\floor{n\cdot})/b_n$. 
For any $t\in[0,\infty)$, define
\begin{equation}   \label{eq_thmLocalTimeLimitTheoremFromMUB22_g_t_d_t}
    g_t(Z)\coloneqq \sup\set{s\leq t: Z(s)=0}
    \quad\text{and}\quad
    d_t(Z)\coloneqq \inf\set{s> t: Z(s)=0},
\end{equation}
and the corresponding $g_t(Z_n)$
and $d_t(Z_n)$. 
Define $L_n(t)
    :=\left|\set{s\in \na/n: Z_n(s)=0}\cap[0,t]\right|$.
%so that $L_n\stackrel{d}{=}L_1(n\cdot)/\tilde c_n$. 
If  $Z_n\stackrel{\p}{\to} Z$ and, for every $t>0$, 
\begin{equation}\label{eq_theorem_MUB22_a_assumption}
g_t(Z_n)\stackrel{\p}{\to}g_t(Z) \quad\text{ and } \quad d_t(Z_n)\stackrel{\p}{\to} d_t(Z),\quad\text{as $n\to\infty$,}
\end{equation}%
there exists a constant $\tilde \kappa>0$ such that for the scaling sequence 
   $ \tilde c_n
    %=\frac{\kappa}{\proba{d_1(Z_1)>n}}
    \coloneqq\tilde \kappa/\proba{d_{1/n}(Z_n)>1}$ the following limit in probability holds:
\[(Z_n,L_n/\tilde c_n)\stackrel{\p}{\to} (Z,L) \text{ as }n\to\infty .\] 
\end{theorem}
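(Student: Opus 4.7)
The strategy I would adopt, following~\cite{MR4463082}, is to recast the problem in terms of the right-continuous inverse
\[
\tau_n(s) \coloneqq \inf\{u \geq 0 : L_n(u)/\tilde c_n > s\},
\]
and to prove that $\tau_n$ converges in probability to $\tau \coloneqq L^{-1}$. Since $\delta \in (0,1)$, classical excursion theory for the point-recurrent self-similar $\cbi$ process $Z$ identifies $\tau$ as a stable subordinator of index $1-\delta$. The joint convergence $(Z_n, L_n/\tilde c_n) \stackrel{\mathbb{P}}{\to} (Z, L)$ would then follow by the standard inversion for non-decreasing processes, using that $L$ is continuous as the inverse of the pure-jump subordinator $\tau$ (and that $Z_n\stackrel{\mathbb{P}}{\to} Z$ is given, so joint convergence reduces to convergence of the second component).

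The first step would be to extract from the hypothesis \eqref{eq_theorem_MUB22_a_assumption} the convergence of the excursion structure of $Z_n$ away from $0$. The jumps of $\tau_n$ correspond precisely to the lengths of complete excursions of $Z_n$; for any fixed $t$, the duration $d_t(Z_n) - g_t(Z_n)$ of the excursion straddling $t$ converges in probability to $d_t(Z) - g_t(Z)$. Combined with $Z_n \stackrel{\mathbb{P}}{\to} Z$ and a monotonicity argument applied on a countable dense set of times $t$, this upgrades to convergence of the marked point process of excursion intervals of $Z_n$ (truncated to those of length greater than some $\eta > 0$) towards that of $Z$ read off from It\^o's excursion measure.

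Second, the normalization $\tilde c_n = \tilde \kappa / \mathbb{P}(d_{1/n}(Z_n) > 1)$ must match the tail of the excursion-length distribution. By the self-similarity of $Z$ and $\delta \in (0,1)$, the It\^o excursion measure of $\{\zeta > t\}$ is regularly varying of index $-(1-\delta)$ in $t$; a Tauberian/regular-variation argument then identifies $\tilde c_n$ as a regularly varying sequence of index $1-\delta$, and the constant $\tilde \kappa$ can be chosen so that the L\'evy measure of the weak limit of the jumps of $\tau_n$ coincides with that of $\tau$. Heuristically, $\mathbb{P}(d_{1/n}(Z_n) > 1)$ measures the probability that an excursion begun near time zero lasts longer than macroscopic time $1$, and this pins down the correct scaling up to a multiplicative constant.

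Given these two ingredients, the convergence $\tau_n \stackrel{\mathbb{P}}{\to} \tau$ should follow because $\tau$, as a subordinator, is determined in law by its L\'evy measure, and the excursion straddling time $t$ can be controlled directly through the hypothesis on $g_t$ and $d_t$. The main obstacle I anticipate is proving that ``small'' excursions (of length below a threshold $\eta$) and the incomplete straddling excursion at the terminal time together contribute negligibly, uniformly in $n$, as $\eta \to 0$. This requires a law-of-large-numbers type estimate leveraging the regular variation of excursion-length tails, combined with the uniform tightness provided by $Z_n \stackrel{\mathbb{P}}{\to} Z$ and the controls on $g_t$ and $d_t$ from \eqref{eq_theorem_MUB22_a_assumption}; pinpointing the constant $\tilde \kappa$ compatible with a pre-fixed normalization of the Markov local time $L$ is the only remaining bookkeeping.
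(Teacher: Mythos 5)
The paper does not prove this statement at all: Theorem~\ref{theorem_MUB22} is quoted verbatim (in simplified form) from \cite[Thm~1]{MR4463082}, so there is no ``paper's own proof'' to compare against --- the authors invoke it as a black box and devote their effort to \emph{verifying its hypotheses} (equation~\eqref{eq_theorem_MUB22_a_assumption}) for the BGWI/CBI pair. Your sketch is therefore a blind reconstruction of what \cite{MR4463082} proves, and the high-level architecture you describe --- pass to the right-continuous inverse $\tau_n$ of $L_n/\tilde c_n$, show $\tau_n$ converges to the stable-$(1-\delta)$ subordinator $\tau = L^{-1}$, then invert using continuity of $L$ --- is indeed the standard route for invariance principles of local times, so it is a reasonable guess at the underlying argument.

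One genuine gap to flag in your write-up: near the end you write that ``$\tau_n \stackrel{\mathbb{P}}{\to} \tau$ should follow because $\tau$, as a subordinator, is determined in law by its L\'evy measure.'' Determination in law only buys you convergence in \emph{distribution}; the theorem asserts convergence in \emph{probability}, and these are not interchangeable here because $\tau_n$ and $\tau$ live on the same probability space precisely so that the stronger statement is meaningful. The mechanism that upgrades to convergence in probability is the pathwise one you only gesture at: the hypotheses $g_t(Z_n)\stackrel{\mathbb{P}}{\to} g_t(Z)$ and $d_t(Z_n)\stackrel{\mathbb{P}}{\to} d_t(Z)$ for every $t>0$, applied on a countable dense set of $t$'s, pin down the entire excursion interval decomposition of $Z_n$ to that of $Z$ in probability, and it is \emph{this} coupling of excursion sets --- not a law-level identification via the L\'evy measure --- that drives $\tau_n\stackrel{\mathbb{P}}{\to}\tau$. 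The L\'evy-measure/regular-variation considerations only serve to show that $\tilde c_n$ is the right deterministic normalisation (so that small-excursion counts stabilise); they cannot by themselves substitute for the pathwise control on excursion intervals. Your ``main obstacle'' paragraph about small excursions is well-placed, but the law-of-large-numbers estimate you anticipate there must be run in probability, conditionally on the coupled excursion structure, not merely at the level of one-dimensional marginals.
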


We will see in the proof of Theorem \ref{theorem_Local_time_conv} 
(cf. equations \eqref{equationLocalLimitTheoremBGWIAt0} and \eqref{equationExcursionLengthAsymptoticsBGWI} of Lemma \ref{lemmaPreLimitAsymptotics} below) 
that the sequences $(c_n)$ and $(\tilde c_n)$ are asymptotically equivalent. 
Hence, to apply Theorem \ref{theorem_MUB22}, we need to verify the limits in~\eqref{eq_theorem_MUB22_a_assumption} of the endpoints of excursions of the scaled BGWI process. 
The applications in~\cite{MR4463082} of the general form of Theorem~\ref{thmLocalTimeLimitTheoremFromMUB22} for regenerative processes are mainly path-wise and do not apply in our setting for two reasons: 
the $\bgwi$ process $Z_1$ is not downwards skip-free and the CBI process $Z$ is  not downwards regular at zero (since it is non-negative).
This makes the hitting times by $Z$ of the boundary point zero much harder to detect from the hitting times of zero by $Z_1$. 

One of the main contributions in this paper is to present a new paradigm, combining path-wise and distributional arguments, enlarging the scope of applications of the framework in~\cite{MR4463082}. 
In the setting of the present paper, this new paradigm rests on the branching properties of both  the $\bgwi$  $Z_1$ and the CBI $Z$, as well as the self-similarity of  $Z$. In particular, we establish the following Yaglom limit for infinite-variance critical branching processes with immigration. 

\begin{theorem}[Yaglom limit for BGWI]
\label{lemmaYaglomLimit}
Under Assumption \ref{assumption_SL} with $\delta=\frac{d}{\alpha c}\in(0,1)$, 
fix any $t>0$ and consider a sequence $(t_n)$ such that $t_n\to t$. 
Then, $Z_n(t_n)$, conditioned on $Z_n$ remaining positive on $[1/n,t_n]$, 
converges weakly to the Linnik law $\nu^t$:
\begin{equation}
    \label{equationYaglomLimit}
    \lim_{n\to\infty}
    \espc{e^{-\lambda Z_n(t_n)}}{d_{1/n}(Z_n)>t_n}
    =(1+\alpha c t \lambda^\alpha)^{-1}\quad \text{for all $\lambda\geq0$.} 
\end{equation}
\end{theorem}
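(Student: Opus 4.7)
The plan is to exploit the regenerative structure of the $\bgwi$ $Z_1$ at zero. Let $T_0 \coloneqq \inf\{j \geq 1 : Z_1(j) = 0\}$ denote the first return time of $Z_1$ to zero, so that $\{d_{1/n}(Z_n) > t_n\} = \{T_0 > \lfloor nt_n \rfloor\}$, and write $\phi_n(s, \lambda) \coloneqq \mathbb{E}[e^{-\lambda Z_n(s)}]$, $h_j \coloneqq \mathbb{P}(T_0 = j)$ and $\bar H(m) \coloneqq \mathbb{P}(T_0 > m)$. The strong Markov property of $Z_1$ at $T_0$ (after which $Z_1$ begins afresh as an independent $\bgwi$ from zero) gives the renewal identity $\mathbb{E}[e^{-\lambda Z_n(t_n)}] = \sum_{j=1}^{\lfloor nt_n \rfloor} h_j\, \phi_n(t_n - j/n, \lambda) + \mathbb{E}[e^{-\lambda Z_n(t_n)};\, T_0 > \lfloor nt_n \rfloor]$, which, using $\sum_j h_j + \bar H(\lfloor nt_n \rfloor) = 1$, rearranges to
\[
\mathbb{E}[e^{-\lambda Z_n(t_n)};\, d_{1/n}(Z_n) > t_n] = \phi_n(t_n, \lambda)\, \bar H(\lfloor nt_n \rfloor) + \sum_{j=1}^{\lfloor nt_n \rfloor} h_j\bigl[\phi_n(t_n, \lambda) - \phi_n(t_n - j/n, \lambda)\bigr].
\]

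Dividing through by $\bar H(\lfloor nt_n \rfloor)$ and passing to the limit uses two ingredients: (i) from Theorem~\ref{septupleLimitTheorem}, $\phi_n(s, \lambda) \to \Phi(s, \lambda) \coloneqq (1 + \alpha c s \lambda^\alpha)^{-\delta}$ uniformly on compacts; and (ii) from Lemma~\ref{lemmaPreLimitAsymptotics} combined with Karamata's Tauberian theorem applied to the generating-function identity $\sum_{m \geq 0} z^m \mathbb{P}(Z_1(m) = 0) = (1 - \mathbb{E}[z^{T_0}])^{-1}$, the regular variation $\bar H(m) \sim K m^{\delta - 1}$ for an explicit $K > 0$. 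It follows that the rescaled atomic measures $\mu_n \coloneqq \bar H(\lfloor nt_n \rfloor)^{-1} \sum_j h_j\, \delta_{j/n}$ converge vaguely on $(0, t]$ to $\mu_\infty(ds) \coloneqq (1 - \delta)\, t^{1 - \delta}\, s^{\delta - 2}\, ds$. Combined with uniform Lipschitz bounds on $\phi_n(\cdot, \lambda)$ (needed to tame the non-integrable singularity of $\mu_\infty$ at $s = 0$), this yields
\[
\mathbb{E}[e^{-\lambda Z_n(t_n)} \mid d_{1/n}(Z_n) > t_n] \longrightarrow \Phi(t, \lambda) + (1 - \delta)\, t^{1 - \delta} \int_0^t [\Phi(t, \lambda) - \Phi(t - s, \lambda)]\, s^{\delta - 2}\, ds.
\]

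What remains is the deterministic identification of this limit with the Linnik Laplace transform. Setting $a \coloneqq \alpha c \lambda^\alpha$ and integrating by parts with antiderivative $s^{\delta - 1}/(\delta - 1)$ reduces the limit above to $1 - \delta a t^{1 - \delta} \int_0^t s^{\delta - 1}\, (1 + a(t - s))^{-\delta - 1}\, ds$. Substituting $u = t - s$ and then $y = a(t - u)/(1 + au)$ maps $[0, t]$ onto $[0, at]$ and reduces the inner integral to $(1 + at)^{-1}\int_0^{at} y^{\delta - 1}\, dy/a^\delta = t^\delta/(\delta(1 + at))$, so the whole expression collapses to $1 - at/(1 + at) = (1 + \alpha c t \lambda^\alpha)^{-1}$, as required. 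The main technical obstacle is the interchange of limit and integral near $s = 0$ against the singular endpoint of $\mu_\infty$; a robust alternative bypassing the need for explicit equicontinuity of $\phi_n$ is to work directly with generating functions in $z$, using the identity $\sum_m z^m \mathbb{E}[e^{-\lambda Z_n(m/n)};\, T_0 > m] = (1 - \mathbb{E}[z^{T_0}])\sum_m z^m \phi_n(m/n, \lambda)$ and applying Karamata's Tauberian theorem to invert.
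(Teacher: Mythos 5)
Your argument takes a genuinely different route from the paper's, and the route is sound modulo one technical point you flag yourself. The paper derives a recursion for $N(n,s)\coloneqq\esp{1-s^{Z_1(n\wedge\rho)}}$ directly from the BGWI dynamics, iterates it to the explicit sum $\sum_{k=0}^n q_{n-k}\bra{1-g\circ f^{\circ k}(s)}\Pi_k(s)$, establishes \emph{three} separate regular-variation asymptotics (for $q_{n-k}/q_{n+1}$, for $1-g\circ f^{\circ k}$, and for $\Pi_k$) uniformly over a cut range of $k$, and finally identifies the resulting Riemann integral via Euler's integral formula and a hypergeometric identity. You instead write the \emph{unconditional} Laplace transform through the standard first-return renewal identity and extract the conditional one algebraically; this lets you get by with only the tail asymptotics of $\bar H$ from Lemma~\ref{lemmaPreLimitAsymptotics} and the marginal convergence $\phi_n\to\Phi$, bypassing the separate analysis of $1-g\circ f^{\circ k}$ and $\Pi_k$. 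Your deterministic identification of the limit (integration by parts, then the substitution $y=a(t-u)/(1+au)$, which makes the $(1+au)$ factors cancel exactly) is a pleasant elementary replacement for the paper's hypergeometric argument; I checked that the rearrangement producing $\phi_n(t_n,\lambda)\bar H(\floor{nt_n})+\sum_j h_j\bra{\phi_n(t_n,\lambda)-\phi_n(t_n-j/n,\lambda)}$ is algebraically exact, that $\mu_n((s,t])\to (s/t)^{\delta-1}-1$ by regular variation of $\bar H$, and that the integral does collapse to $(1+\alpha ct\lambda^\alpha)^{-1}$.

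Three points need attention before this constitutes a complete proof. First, the interchange of limit and integral near $s=0$, which you correctly identify as the main obstacle, must actually be carried out: it does work, because $0\leq \phi_n(t_n,\lambda)-\phi_n(t_n-s,\lambda)\leq \sum_{\floor{n(t_n-s)}\leq i<\floor{nt_n}}\paren{1-g\circ f^{\circ i}(e^{-\lambda/b_n})}\leq (1-g(e^{-\lambda/b_n}))\paren{\floor{nt_n}-\floor{n(t_n-s)}}=O(s)$ uniformly by~\eqref{eq:asym_bound}, while summation by parts and the direct half of Karamata's theorem give $\int_0^\gamma s\,\mu_n(ds)=\bar H(\floor{nt_n})^{-1}\sum_{j\leq\gamma n}(j/n)h_j\to\frac{1-\delta}{\delta}\gamma^\delta t^{1-\delta}$, which vanishes as $\gamma\to0$; but none of this is in your write-up. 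Second, citing Theorem~\ref{septupleLimitTheorem} for $\phi_n\to\Phi$ is circular, since that result depends on Theorem~\ref{theorem_Local_time_conv}, which in turn uses the Yaglom limit you are proving; the correct reference is the underlying convergence $Z_n\stackrel{d}{\to}Z$ (via \cite[Thm~2.3]{MR0290475} or the coupling of Subsection~\ref{subsec:septuple_lim_thm}), which is established independently. Third, the claim $\bar H(m)\sim Km^{\delta-1}$ is not literally correct — by~\eqref{equationExcursionLengthAsymptoticsBGWI} there is also the slowly varying factor $1/\mathcal{l}^*(m)$ — though what you actually use is only the ratio $\bar H(\floor{ns})/\bar H(\floor{nt_n})\to(s/t)^{\delta-1}$, which regular variation gives in any case. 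The closing ``robust alternative'' via the fixed-$n$ generating-function identity and a Tauberian inversion is, as written, murkier than the main route rather than cleaner: the identity is in the auxiliary variable $z$ for each fixed $n$, so one would still have to organize the double limit $z\uparrow 1$, $n\to\infty$, which is not obviously any easier than proving the uniform Lipschitz bound.
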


While Assumption \ref{assumption_SL} covers all finite variance offspring distributions (see Appendix~\ref{app:alpha=1} below), the novelty in Theorem~\ref{lemmaYaglomLimit} is in the infinite-variance case (see the classical finite-variance result in~\cite[Thm~1]{MR451433}). 
Theorem~\ref{lemmaYaglomLimit} plays a key role in the proof of Theorem~\ref{theorem_Local_time_conv} and is of independent interest.
The limiting Laplace transform in \eqref{equationYaglomLimit} is that of a Linnik law $\nu^t$ (cf.~\cite{MR1047827}).
Note that the Laplace transform in~\eqref{equationYaglomLimit} coincides with that of the marginal at time $t$ (with $\delta=1$) under $\p_0$ in \eqref{equationLaplaceTransformOfCBI} but, unlike its unconditioned version,
surprisingly  depends neither on $d$ nor on $\delta$.
The presence of the immigration in the BGWI $Z_n$ persists in the limit in~\eqref{equationYaglomLimit}: the corresponding Yaglom limit for BGW process in~\cite[Thm~1]{MR0228077} is not Linnik (for $\alpha\in(0,1)$).
Note also that the Linnik law $\nu^t$ is a natural generalisation of the corresponding Yaglom limit for finite-variance critical BGWIs ($\alpha=1$ in our setting), where the limit is exponential~\cite[Thm~1(a)]{MR451433}.

It is natural to consider an extension of Theorem~\ref{lemmaYaglomLimit} to the limit of the law of the entire excursion of $Z_n$, straddling the time $1/n$, with lifetime greater than $t_n$. However, as this is unnecessary for the scaling limits of local extinctions of BGWIs, it is left for future research. 

Two further results, crucial in our proof of Theorem~\ref{theorem_Local_time_conv}, are the local limit theorem for $g_t(Z_n)/t$ with generalised arcsine limit law of parameter $1-\delta$ (see Corollary~\ref{corPreLimitAsymptotics} below for the precise statement) and the characterisation of the conditional law of the marginal of the CBI process $Z(t)$, given $g_t(Z)=s$, 
as the Linnik law $\nu^{t-s}$. The local limit theorem follows from  the asymptotics of both the local extinction probabilities of the BGWI and the tails of its return times to zero. The conditional law has a short direct proof using excursion theory and the self-similarity of the CBI process $Z$ (see Proposition~\ref{lemmaCBIExcursionStraddlingt} below for details). Since this conditional law equals the limit law in~\eqref{equationYaglomLimit}, Proposition~\ref{lemmaCBIExcursionStraddlingt} and Theorem~\ref{lemmaYaglomLimit}
imply that excursions with lifetime greater than $t$ (evaluated at time $t$) of BGWIs converge to those of the limiting CBI.

A short \href{https://youtu.be/M6cBiiJt_90?si=VRRoRmTYZMzTjat4}{YouTube presentation}~\cite{YouTube_talk} describes our results. \href{https://youtu.be/PCuIkPBApoE?si=abwrWDXAy9uYOqO4}{Part 2} of this video discusses the ideas behind the proofs as well as their structure.

\subsection{The septuple limit theorem}
\label{subsec:septuple_lim_thm}
Before proceeding to the proof of Theorem~\ref{theorem_Local_time_conv} in Sections~\ref{sectionProofOverview} and~\ref{sectionProofsOfAuxiliaryResults} below,   we explain
 why $Z_1(\floor{n\cdot})/b_n \stackrel{d}{\to} Z$ as $n\to\infty$ and construct a probability space on which the convergence takes place in probability (and not only weakly) as Theorem~\ref{theorem_MUB22} requires. 
The latter can of course be achieved through Skorokhod's representation for weak convergence, 
but a much more concrete construction (in terms of the scaling limits of random walks) yields a far-reaching extension of Theorem~\ref{theorem_Local_time_conv},  given in Theorem~\ref{septupleLimitTheorem} below, hard to obtain through other methods. 

Define the distribution $\tilde \mu$ on $\na\cup\{-1\}$ by $\tilde \mu(k)=\mu(k+1)$ and let 
$X_1$ and $Y_1$ be independent random walks  with jump distributions $\tilde \mu$ and $\nu$, respectively. 
 The discrete Lamperti transformation 
 (see, for example, \cite[Eq.~(1.1) in Ch. 9\S 1]{MR838085}, \cite[p. 2]{MR3444314} or \cite[Eq.~(1)]{MR3098685})
 tells us that $Z_1$ has the same law as the solution to the recursion
\begin{equation}
\label{eq_discreteLampertiTransformation_1}
    Z_1= X_1\circ C_1+Y_1,
    \quad\text{where}
    \quad
    C_1(k)=\sum_{0\leq j<k} Z_1(j)\qquad\text{for all $k\in\na$.}
\end{equation}
As remarked above, under Assumption~\ref{assumption_SL}, $\mu$ is critical, implying that $X_1$ is a centred random walk. 
Recall that $X_1$ and $Y_1$ are in the domain of attraction of (independent) stable L\'evy processes $X$ and $Y$ with positive jumps 
of indices $1+\alpha$ and $\alpha\in (0,1]$, respectively (see also Appendix \ref{subsec:equivalence_SL_SL'}). 
More specifically, for the scaling sequence $\paren{b_n}$ in~\eqref{eq:cont_time_bgwi} above, we have
\[
        \left(X_1(\floor{n b_n \cdot}), Y_1(\floor{n\cdot})\right)/b_n\stackrel{d}{\to}  (X,Y)\quad\text{as $n\to\infty$}
\]
in the product Skorokhod topology. 
Since $X$ is a spectrally positive stable L\'evy process and $Y$ is a stable subordinator, 
 their respective Laplace exponents take the form $\Psi(\lambda)=c\lambda^{1+\alpha}$ and $\Phi(\lambda)=d\lambda^\alpha$, $\lambda\geq 0$. 
Note that, when $\alpha=1$, $X$ is a multiple of Brownian motion and $Y$ is the deterministic subordinator $t\mapsto d\cdot t$. 
Heuristically (and rigorously established in Theorem~\ref{septupleLimitTheorem} below), 
the discrete Lamperti transformation of $X_1$ and $Y_1$ in~\eqref{eq_discreteLampertiTransformation_1}
can be scaled to converge 
to the continuous Lamperti transformation $Z$ of $X$ and $Y$, 
the unique solution to the equation
\begin{equation}
\label{eq_LampertiTransformation}
	Z=X\circ C+Y,
        \quad\text{where}\quad 
        C(t)=\int_{0}^t Z(s)\, ds\qquad\text{for all $t\geq0$.}
\end{equation}
The process $Z$ defined in~\eqref{eq_LampertiTransformation} is a continuous-state branching process 
whose dynamics are characterised by their Laplace transforms 
in~\eqref{equationLaplaceTransformOfCBI} above. 
By analogy with the discrete case, the Laplace exponents of $X$ and $Y$ are then called the branching and immigration mechanisms of $Z$.\footnote{
See \cite{MR3098685, MR3689968} for an analysis of the time-change equation.}
The above method of establishing weak convergence of branching processes in terms of associated random walks is a path-wise version of results of \cite{MR0362529} when there is no immigration. 
We can now state the following septuple limit theorem.

\begin{theorem}
\label{septupleLimitTheorem}
Let Assumption \ref{assumption_SL} hold 
with $\delta=\frac{d}{\alpha c}\in(0,1)$ and the regularly varying scaling sequence $(b_n)$ satisfy~\eqref{equationDefbn}.
Consider  independent random walks $X_1$ and $Y_1$ whose jump distributions have generating functions $f(s)/s$ and $g(s)$ and independent stable L\'evy processes $X$ and $Y$ with Laplace exponents $\Psi(\lambda)=c\lambda^{1+\alpha}$ and $\Phi(\lambda)=d\lambda^\alpha$. 
Let $Z_1$ and $C_1$ (resp. $Z$ and $C$) be constructed from $X_1$ and $Y_1$  (resp. $X$ and $Y$) as in 
\eqref{eq_discreteLampertiTransformation_1} (resp.~\eqref{eq_LampertiTransformation}). 
Let $L_1(X_1)$ 
and $L_1(Z_1)$ 
be the counting local times  at $0$
of $X_1$
and $Z_1$ (cf.~\eqref{equationDefiningDiscreteLocalTime}) and let $L(X)$ and $L(Z)$ be (Markovian) local times at $0$ of $X$ and $Z$. 
Then there exists a regularly varying sequence $\paren{a_n}$ of index 
$1+1/\alpha$,  
such that 
\begin{align*}
    &\paren{ \frac{X_1(\floor{nb_n\cdot})}{b_n}, 
    \frac{L_1(X_1)(\floor{n b_n \cdot})}{a_{n b_n}}, 
    \frac{Y_1(\floor{n\cdot})}{b_n},
    \frac{C_1(\floor{n\cdot})}{n b_n}, 
    \frac{X_1\circ C_1(\floor{n \cdot})}{b_n}, 
    \frac{Z_1(\floor{n\cdot })}{b_n}, 
    \frac{L_1(Z_1)(\floor{n\cdot})}{c_n}
    }
    \\&\stackrel{d}{\to} 
    (X,L(X), Y,C,X\circ C,Z,L(Z))\quad\text{as $n\to\infty$.}
\end{align*}
\end{theorem}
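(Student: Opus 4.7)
The plan is to build the septuple limit in three stages. First, obtain joint weak convergence of the driving pair $(X_1,Y_1)$ together with the counting local time of $X_1$ at zero. Second, pass to an enlarged probability space via Skorokhod representation and apply continuity of the continuous Lamperti time-change to adjoin $C_1$, $X_1\circ C_1$ and $Z_1$. Third, invoke Theorem~\ref{theorem_Local_time_conv} to attach $L_1(Z_1)$.

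For the first stage, Assumption~\ref{assumption_SL} is equivalent, via Proposition~A.1 of the appendix, to $\tilde\mu$ lying in the $(1+\alpha)$-stable domain of attraction and $\nu$ in the $\alpha$-stable domain of attraction; the classical functional invariance principles then give $X_1(\floor{nb_n\cdot})/b_n\stackrel{d}{\to}X$ and $Y_1(\floor{n\cdot})/b_n\stackrel{d}{\to}Y$ in Skorokhod space, with joint convergence following from the independence of $X_1$ and $Y_1$. The walk $X_1$ is centred, recurrent, and its first-return time to $0$ has tail regularly varying of index $\alpha/(1+\alpha)$ (by Sparre--Andersen and fluctuation theory for the limiting spectrally positive stable Lévy process $X$); applying Theorem~\ref{thmLocalTimeLimitTheoremFromMUB22} directly to $X_1$, whose continuum limit $X$ is regular and recurrent at $0$, produces a regularly varying sequence $(a_n)$ of index $1+1/\alpha$ for which $L_1(X_1)(\floor{nb_n\cdot})/a_{nb_n}\stackrel{\p}{\to}L(X)$ jointly with the convergence of $X_1$.

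In the second stage, I would use Skorokhod's representation to realise the triple $(X_1(\floor{nb_n\cdot})/b_n,\; L_1(X_1)(\floor{nb_n\cdot})/a_{nb_n},\; Y_1(\floor{n\cdot})/b_n)$ as an almost surely convergent sequence to $(X,L(X),Y)$ on a common probability space. The continuous Lamperti time-change $(x,y)\mapsto(z,c,x\circ c)$, defined implicitly by $z=x\circ c+y$ and $c(t)=\int_0^t z(s)\,ds$, is almost surely continuous at realisations of the pair $(X,Y)$ with $X$ spectrally positive stable and $Y$ an independent subordinator (see \cite{MR3098685,MR3689968}). The discrete identity~\eqref{eq_discreteLampertiTransformation_1} rescales consistently, since $C_1(\floor{nt})/(nb_n)=n^{-1}\sum_{j<\floor{nt}}Z_1(j)/b_n$ is a Riemann-type sum of the rescaled BGWI $Z_1(\floor{n\cdot})/b_n$, so the continuity of the Lamperti map propagates the convergence to the joint almost-sure limits of $Z_1(\floor{n\cdot})/b_n$, $C_1(\floor{n\cdot})/(nb_n)$ and $X_1\circ C_1(\floor{n\cdot})/b_n$ to $Z$, $C$ and $X\circ C$.

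For the third stage, the in-probability limit $Z_1(\floor{n\cdot})/b_n\stackrel{\p}{\to}Z$ just produced is exactly the hypothesis needed by Theorem~\ref{theorem_Local_time_conv}, which then delivers $L_1(Z_1)(\floor{n\cdot})/c_n\stackrel{\p}{\to}L(Z)$ jointly with all six other coordinates, completing the septuple. The main obstacle is the second stage: establishing the almost sure continuity of the Lamperti time-change in the infinite-variance regime $\alpha<1$, where $X$ has large positive jumps whose images under $c^{-1}$ must be checked not to create pathological discontinuities, and verifying that the rescaled discrete Lamperti equation is genuinely the image of the rescaled driving pair under this continuous map rather than merely a weak approximation of it.
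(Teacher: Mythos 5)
Your proposal follows essentially the same route as the paper: weak convergence of the driving walks via Laplace transforms, Skorokhod representation to an almost-sure coupling, the Lamperti time-change machinery of \cite{MR3098685} (which indeed covers the infinite-variance regime; the paper relies on \cite[Prop.~2, Prop.~4, Thm~3]{MR3098685} for uniqueness and for the joint-jump condition), then the local-time invariance principle of \cite{MR4463082} for $X_1$ and Theorem~\ref{theorem_Local_time_conv} for $Z_1$, with the in-probability character of both local-time limits on the coupled space yielding the joint septuple. Two small points of imprecision. First, you invoke Theorem~\ref{thmLocalTimeLimitTheoremFromMUB22} directly on $X_1$; as stated in this paper that theorem is a specialisation to the BGWI $Z_n$, so for the random walk you actually want \cite[Thm~2]{MR4463082} (or the general form of \cite[Thm~1]{MR4463082}), which is what the paper cites. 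Second, you assert that this local-time theorem already produces a regularly varying $(a_n)$; it does not — the scaling sequence from \cite{MR4463082} is expressed in terms of excursion measures, and its regular variation of index $1+1/\alpha$ requires a separate argument, carried out in Appendix~\ref{subce:a_n_reg_var} via Lamperti's converse \cite[Thm~2]{MR0138128} and identification of the self-similarity exponent of $L(X)$. Neither point undermines the overall plan; they are details you would need to fill in.
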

Note the scaling in the second component $L_1(X_1)$:
in general the temporal scaling would be $n$ and the spatial scaling would be $a_n$. In this case we pass to a subsequence for the joint convergence of the random vector. 
Also, note that the regularly varying sequence $nb_n$ can always be assumed to be integer valued 
(since $\floor{b_n}$ also satisfies \eqref{equationDefbn} if $b_n$ does). 
The simplest scalings are those of $Y_1$ and $Z_1$, which then dictate the form of the others  as seen in the proof. 

The above result is essentially a straightforward consequence of Theorem~\ref{theorem_Local_time_conv}.  
The hard part of the proof of Theorem~\ref{theorem_Local_time_conv} consists of establishing the assumptions of Theorem~\ref{thmLocalTimeLimitTheoremFromMUB22}. 
Assuming we have done so, we now proceed with a simple proof of Theorem~\ref{septupleLimitTheorem}. This includes a proof of the weak convergence of the scaled BGWI processes to the CBI processes, 
which is essentially (but not explicitly) given in ~\cite{MR3098685}. 
This convergence depends on the following explicit coupling. 

%Let
%\[
%    X_n \coloneqq X_1(\floor{nb_n\cdot})/b_n
%    \quad\text{and}\quad
%    Y_n \coloneqq Y_1(\floor{n\cdot })/b_n.
%    \footnote{In order to canonically identify continuous and discrete time versions of processes such as $X_1$, 
%we assume without loss of generality that scaling sequences such as %$(b_n)$ start at $1$.}
%\]

The Laplace transforms
$\lambda\mapsto g(e^{-\lambda/b_n})^n$ and $\lambda\mapsto e^{\lambda \lfloor nb_n\rfloor/b_n}f(e^{-\lambda/ b_n})^{\lfloor nb_n\rfloor}$ (of $Y_1(n)/b_n$ and $X_{1}(\floor{n b_n})/b_n$) converge towards $\exp(-d\lambda^\alpha)$ and $\exp (-c\lambda^{1+\alpha})$  as $n\to\infty$, respectively (see~\eqref{eq:g_conv} and~\eqref{eq:f_conv} in Appendix \ref{subsec:equivalence_SL_SL'}).
Thus we see that these random variables converge to $X(1)$ and $Y(1)$ as $n\to\infty$. 
Skorokhod's theorem~\cite[Thm~23.14]{MR4226142}
then implies the weak convergence of the scaled processes $X_1(\floor{nb_n\cdot})/b_n\stackrel{d}{\to } X$ and $Y_1(\floor{n\cdot })/b_n\stackrel{d}{\to } Y$ in the Skorokhod topology. 
Independence implies their joint weak convergence to  $(X,Y)$ in the product Skorokhod topology. 
From this point onwards, we work on a probability space where this convergence holds almost surely: 
assume then, that on an adequate probability space, there exist a sequence of processes $X_n$ and $Y_n$ such that $X_n\stackrel{d}{=}X_1(\floor{nb_n\cdot})/b_n$ and $Y_n\stackrel{d}{=} Y_1(\floor{n\cdot })/b_n$ and that $X_n\to X$ and $Y_n\to Y$ almost surely (in the Skorokhod topology). 
Note that the  temporal scaling of the process $X_1$, with law equal to that of $X_n$, 
allows us to apply~\cite[Lem.~6]{MR3098685} and obtain the scalings that should be used for $Z_1$ and $C_1$. 
Indeed, in the terminology of~\cite{MR3098685}, 
for a recursion $h^{1/n}$, analogous to the discrete Lamperti transformation in~\eqref{eq_discreteLampertiTransformation_1},
but in which time increases each $1/n$ (i.e. $1/n$ is a discretisation parameter), we have 
\[
    \frac{1}{b_n}Z_1(\floor{n\cdot})\stackrel{d}{=}Z_n=h^{1/n}(X_n,Y_n)\qquad\&\qquad
     \frac{1}{nb_n}C_1(\floor{b_n\cdot})\stackrel{d}{=} C_n =c^{1/n}(X_n,Y_n)
\]
(see~\cite[Lem.~6]{MR3098685} for the latter recursion).
Assuming~\eqref{eq_LampertiTransformation}, together with a certain differential inequality, has a unique solution and  the processes $X\circ C$ and $Y$ do not jump at the same time,~\cite[Thm~3]{MR3098685} gives the convergence $Z_n\to Z$ and $C_n\to C$, 
almost surely. 
The fact that \eqref{eq_LampertiTransformation} and the differential inequality have a unique solution 
follows from~\cite[Prop.~2]{MR3098685}. 
By \cite[Prop.~4]{MR3098685}, $X\circ C$ and $Y$ do not jump at the same time since $X$ is quasi left continuous. 

The coupling $(Z_n,Z)$, where $Z_n\stackrel{d}{=}Z_1(\floor{n\cdot})/b_n$,  is crucial in the proof of Theorem~\ref{septupleLimitTheorem}.
%but also in the proof of Theorem~\ref{theorem_Local_time_conv} below.

\begin{proof}[Proof of Theorem~\ref{septupleLimitTheorem}]
The coupling constructed above implies $(X_n, Y_n,C_n,X_n\circ C_n,Z_n)\to(X,Y,C,X\circ C,Z)$ almost surely as $n\to\infty$.
It remains to justify the convergence of the local time components. This is where we can no longer assert the almost sure convergence and have to resort to convergence in probability. 
In the case of $L_1(X_1)$, this follows from~\cite[Thm~2]{MR4463082}, 
except that we do not immediately get that the scaling sequence is regularly varying. 
Indeed, the scaling sequence $(a_n)$ obtained in \cite{MR4463082}, 
for which $L_n(t):= L_1(X_1)(\floor{nt})/a_n$ converges as $n\to \infty$ to $L(X)$,  
is expressed in terms of the excursion measure of $X_n$. 
The regular variation of $(a_n)$ is established in  Appendix~\ref{subce:a_n_reg_var} below. 
 
The scaling limit of $L_1(Z_1)$ is, of course, 
the content of Theorem \ref{theorem_Local_time_conv} and follows from our assumption, which in turn imply the assumptions of Theorem~\ref{thmLocalTimeLimitTheoremFromMUB22} above. 
Since the above limit theorems all hold in probability whenever the random walk components converge almost surely, the stated joint weak convergence follows. 
\end{proof}

\subsection{Organisation of the remainder of the paper}
%We end this section by discussing the organisation of the paper. 
In Section \ref{sectionProofOverview}, 
we discuss the proof of Theorem \ref{theorem_Local_time_conv} 
in the context of the framework of Theorem \ref{theorem_MUB22}. 
Section~\ref{sectionProofOverview} gives a proof of Theorem~\ref{theorem_Local_time_conv} using a result on branching processes, Theorem~\ref{propositionConvergenceOfPairsdepsdgepsg}, stated and applied at the end of the section. 
Finally, in Section \ref{sectionProofsOfAuxiliaryResults}, 
we prove Theorem~\ref{propositionConvergenceOfPairsdepsdgepsg}. This proof  is of independent interest as it  extends  classical results on (local) extinction times of branching processes to the setting of infinite variance and establishes a new Yaglom limit for BGWIs stated in Theorem~\ref{lemmaYaglomLimit} above. 
The proof of Theorem~\ref{propositionConvergenceOfPairsdepsdgepsg} relies on a web of results, discussed briefly in the first paragraph of Section~\ref{sectionProofsOfAuxiliaryResults} and 
pictorially represented in the diagram following it. Appendix~\ref{appendix_aroundRegularVariation} discusses Assumption~\ref{assumption_SL}. Appendix~\ref{app:example_non_conv_local_time} illustrates the fragility of the convergence of local time in the context of critical BGWIs.

\section{Proof of Theorem~\ref{theorem_Local_time_conv}}
\label{sectionProofOverview}

%The objective of this section is to discuss and prove %Theorem  \ref{theorem_Local_time_conv}. 
%As mentioned in Section \ref{sectionIntroduction}, 
The of our main result in Theorem~\ref{theorem_Local_time_conv} consists of verifying the assumptions in equation~\eqref{eq_theorem_MUB22_a_assumption} of Theorem~\ref{theorem_MUB22}. 
In the applications of Theorem~\ref{theorem_MUB22} in~\cite{MR4463082},  its assumptions were verified using path-wise arguments. 
Indeed, we see that~\eqref{eq_theorem_MUB22_a_assumption} mostly involves the convergence of the hitting times of zero, both
forwards and backwards in time from a given time $t$. 
However, the path-wise arguments of~\cite{MR4463082} do not suffice in the setting of 
Theorem~\ref{theorem_Local_time_conv} 
%for the following reasons:  
%requires a new technique for establishing these conditions for the following reasons
%the approximating since hitting times are not continuous functionals on the Skorokhod space of \cadlag\ functions and, 
because, for the CBI, we wish to analyse the hitting times of the boundary point $0$, which is 
 difficult to detect from the hitting times of the approximating BGWI processes (see example in 
Appendix~\ref{app:example_non_conv_local_time} below where BGWIs converge to the CBI $Z$ but the counting local times at $0$ do not converge to the Markov local time of $Z$ at $0$).

The innovation in the proof of Theorem~\ref{theorem_Local_time_conv} 
comes from introducing a new technique for establishing the limits in probability in~\eqref{eq_theorem_MUB22_a_assumption}, where
$Z_n$ and $Z$ are coupled as in Subsection~\ref{subsec:septuple_lim_thm} above.
%In the proof above, we introduced the scaled processes $X_n,Y_n,Z_n,C_n,\ldots$, 
%which hide the explicit scalings involved in Theorems \ref{theorem_Local_time_conv} and \ref{septupleLimitTheorem}, 
%but which will be the main focus from this point onwards. 
Unlike the applications in~\cite{MR4463082}, our new technique relies on  a mixture of path-wise and distributional arguments. In  order to prove the assumptions in~\eqref{eq_theorem_MUB22_a_assumption} involving the limits of  starts and ends of excursions $g_t$ and $d_t$ 
defined in~\eqref{eq_thmLocalTimeLimitTheoremFromMUB22_g_t_d_t}, we introduce analogous quantities (both for $Z$ and $Z_n$) at any positive level $\eps>0$:
\begin{equation}
\label{eq_thmLocalTimeLimitTheoremFromMUB22_geps_t_deps_t}
    g^\eps_t(Z)\coloneqq \sup\set{s\leq t: Z(t)\in [0,\eps)}
    \quad\text{and}\quad
    d^\eps_t(Z)\coloneqq \inf\set{s> t: Z(t)\in [0,\eps)}. 
\end{equation}
with conventions $\sup\emptyset=0$ and $\inf\emptyset=\infty$. 

The following inequalities are valid for any $t\geq 0$ and any $\eps,\eta>0$:
\[
   \proba{\abs{d_t(Z)-d_t(Z_n)}>\eta}\leq \mathrm{I}_d+\mathrm{II}_d+\mathrm{III}_d
   \quad\text{and} 
   \quad\proba{\abs{g_t(Z)-g_t(Z_n)}>\eta}\leq \mathrm{I}_g+\mathrm{II}_g+\mathrm{III}_g,
\]
%\begin{align*}
%    &\proba{\abs{d_t(Z)-d_t(Z_n)}>\eta}\leq \mathrm{I}_d+\mathrm{II}_d+\mathrm{III}_d
%\end{align*}
where the summands on the right-hand sides of the two inequalities are given by
\begin{equation}
\label{eq:def_I_II_III}
\begin{array}{lcl}
 \mathrm{I}_d
    \coloneqq\proba{\abs{d_t(Z)-d^{\eps}_{t}(Z)}>\eta/3},& &\mathrm{I}_g
    \coloneqq\proba{\abs{g_t(Z)-g^{\eps}_{t}(Z)}>\eta/3},\\
    \mathrm{II}_d
    \coloneqq\proba{\abs{d^\eps_{t}(Z)-d^\eps_{t}(Z_n)}>\eta/3},& & \mathrm{II}_g
    \coloneqq\proba{\abs{g^\eps_{t}(Z)-g^\eps_{t}(Z_n)}>\eta/3},\\
    \mathrm{III}_d
    \coloneqq\proba{\abs{d_t(Z_n)-d^{\eps}_{t}(Z_n)}>\eta/3},& &\mathrm{III}_g
    \coloneqq\proba{\abs{g_t(Z_n)-g^{\eps}_{t}(Z_n)}>\eta/3}. 
    \end{array}\end{equation}
% The corresponding $\mathrm{I}_g, \mathrm{II}_g$ and $\mathrm{III}_g$ in the inequality
% \[
%    \proba{\abs{d_t(Z)-d_t(Z_n)}>\eta}\leq \mathrm{I}_d+\mathrm{II}_d+\mathrm{III}_d\quad\text{and} \quad\proba{\abs{g_t(Z)-g_t(Z_n)}>\eta}
%     \leq \mathrm{I}_d+\mathrm{II}_d+\mathrm{III}_d
% \]are given by
% \begin{align*}
%     \mathrm{I}_g
%     &\coloneqq\proba{\abs{g_t(Z)-g^{\eps}_{t}(Z)}>\eta/3}, \\
%     \mathrm{II}_g
%     &\coloneqq\proba{\abs{g^\eps_{t}(Z)-g^\eps_{t}(Z_n)}>\eta/3},\text{ and }\\
%     \mathrm{III}_g
%     &\coloneqq\proba{\abs{g_t(Z_n)-g^{\eps}_{t}(Z_n)}>\eta/3}. 
% \end{align*}
By the coupling $(Z_n,Z)$, constructed before the proof of Theorem~\ref{septupleLimitTheorem} above, we may assume that 
the scaled BGWI processes $Z_n\stackrel{d}{=}Z_1(\floor{n\cdot})/b_n$ converge to the CBI process $Z$ almost surely $Z_n \stackrel{\text{a.s.
}}{\to} Z$ as $n\to\infty$.\footnote{The properties of the coupling $(Z_n,Z)$ used in the remainder of the paper are the almost sure convergence and the correct distributions of the processes $Z_n$ and $Z$. The processes $X_n,Y_n,C_n,X,Y,C$, used in this coupling construction, do not feature in the proof of Theorem~\ref{theorem_Local_time_conv}.}
%\label{footnote_page} 
Since this implies convergence in probability $Z_n \stackrel{\mathbb{P}}{\to} Z$ as $n\to\infty$,  Theorem~\ref{theorem_Local_time_conv} will follow from  Theorem~\ref{theorem_MUB22} if we can prove that, 
for every $\eta>0$, 
we have 
\begin{equation}
\label{eq:lim}
\lim_{\eps\to 0}\limsup_{n\to\infty}P(n,\eps)=0,\quad\text{ where }\quad 
P(n,\eps)\in\{\mathrm{I}_d, \mathrm{II}_d, \mathrm{III}_d, \mathrm{I}_g, \mathrm{II}_g, \mathrm{III}_g\}.
\end{equation}
%Note that the probabilities $\mathrm{I}_d,\mathrm{I}_g$ 
%involve only the limiting CBI process $Z$ and can be controlled using the regularity of $Z$. The probabilities
%$\mathrm{II}_d,\mathrm{II}_g$
%are also amenable to path-wise arguments using the coupling $(Z_n,Z)$ %mentioned above and 
%involving the Skorokhod space Lemma~\ref{lemmaSkorokhodSpaceHittingTimeOfOpenSet} established below. It is the probabilities  $\mathrm{III}_d,\mathrm{III}_g$ that involve the approximating branching processes only and require
% auxiliary distributional limit results concerning discrete-time BGWI %processes. 
 %developed in Section~\ref{sectionProofsOfAuxiliaryResults} below.

The remainder of the paper analyses each of these limits. 
We first deal with the limits of the probabilities $\mathrm{I}_d,\mathrm{I}_g$, which involve the limiting process only and essentially only require the quasi left continuity of $Z$. We then analyse the limits of $\mathrm{II}_d,\mathrm{II}_g$,   requiring the coupling $(Z_n,Z)$ mentioned above and a general path-wise convergence result in Lemma~\ref{lemmaSkorokhodSpaceHittingTimeOfOpenSet}, downwards regularity of $Z$ and its quasi left continuity.  
The limits in~\eqref{eq:lim} of probabilities $\mathrm{III}_d,\mathrm{III}_g$
follow from Theorem~\ref{propositionConvergenceOfPairsdepsdgepsg} below, whose proof in Section~\ref{sectionProofsOfAuxiliaryResults} below  requires both weak convergence of the hitting times of branching processes as well as path-wise arguments,  including the aforementioned Skorokhod convergence result in Lemma~\ref{lemmaSkorokhodSpaceHittingTimeOfOpenSet}.
 
\begin{proof}[Proof of Theorem \ref{theorem_Local_time_conv} (case $\lim_{\eps\to0}\mathrm{I}_{d,g}=0$ for $\mathrm{I}_{d,g}$ in~\eqref{eq:def_I_II_III})] 
Note  that by~\eqref{equationLaplaceTransformOfCBI} the following limit holds $\proba{Z(t)=0}=
\lim_{\lambda\to\infty}\imf{\mathbb{E}_z}{e^{-\lambda Z(t)}}=0$ 
for all $Z(0)=z\in[0,\infty)$ and $t>0$.
Moreover, since $\delta\in (0,1)$,
 \cite[\S 5.2.1]{MR3263091} implies that $Z$ is recurrent. 
As neither $\mathrm{I}_d$ nor $\mathrm{I}_g$ depend on $Z_n$, 
we will in this part of the proof temporarily denote $d_t=d_t(Z)$ and $d_t^\eps=d_t^\eps(Z)$ and, similarly,  $g_t=g_t(Z)$ and $g_t^\eps=g_t^\eps(Z)$. Pick $t>0$.

%If $Z(t)=0$, then $d^\eps_t=d_t=t$ so that $d^\eps_t\to d_t$ as $\eps\to 0$. 
%Otherwise, if $Z(t)>0$, the lack of 
Since $Z$ has no negative jumps,  for $0< \eps<Z(t)$ we get $d^{\eps}_t\leq d_t<\infty$ and $Z(d^\eps_t)=\eps$.  Moreover, $d^\eps_t$ increases strictly, 
say to $\tilde d_t\leq d_t$, as $\eps\to 0$. 
Recall that the self similar CBI process $Z$ of Theorem \ref{theorem_Local_time_conv}  
is Feller  (evident from the one-dimensional distributions in \eqref{equationLaplaceTransformOfCBI}) and therefore quasi left continuous, 
implying $\eps= Z({d^\eps_t})\to Z({\tilde d_t})$ 
so that $Z(\tilde d_t)=0$ and hence $\tilde d_t=d_t$. 
Hence $d^\eps_t\to d_t$ as $\eps\to 0$ a.s. for all $t>0$, implying $\mathrm{I}_d\to0$ as $\eps\to0$.

Regarding $\mathrm{I}_g$, 
%Now, 
on the interval $(g_t,d_t)$,   $g^{\eps}_t$ decreases (as $\eps\to0$) to a limit we will denote $\tilde g_t\geq g_t$. 
Note that, by definition $\eps\geq Z({g^\eps_t-})\to 0$  as $\eps\to 0$ ($Z$ has left limits $Z(s-)$, $s\geq0$, with $Z(0-)\coloneqq 0$).
If $g^\eps_t>\tilde g_t$ for all small $\eps>0$ then $0=\lim_{\eps\to 0} Z({g^\eps_t-})=Z({\tilde g_t})$ by right continuity, 
so that $Z({\tilde g_t})=0$ and therefore $\tilde g_t=g_t$. 
If $g^\eps_t=\tilde g_t$ for all $\eps$ small enough, it means that $Z$ jumps at $\tilde g_t$ 
and, by the lack of negative jumps, $Z({\tilde g_t})>0$. Since $\eps\geq Z({g^\eps_t-})= Z({\tilde g_t-})$ for all small $\eps>0$, we get $Z({\tilde g_t-})=0$. 
To conclude, 
we will use the following claim, 
found in \cite[p. 44]{MR4463082} and which essentially uses quasi left continuity: 
during each excursion interval $(g,d)$ of $Z$ away from $0$ 
(i.e. a connected component of the excursion set
   $ [0,\infty)\setminus\overline{ \{s\geq 0: Z(s)=0\} }$),
for every $u\in (g,d)$, 
we have not only that $Z(u)\neq 0$ but also that $Z(u-)\neq 0$. 
(We prove a similar statement at level $\eps$ in our analysis of $\mathrm{II}_d$ below.) 
The claim  implies that $\tilde g_t$ does not belong to the interval $(g_t,t]\subset(g_t,d_t)$ 
and therefore $\tilde g_t=g_t$. 
Thus $\lim_{\eps\to 0}g^\eps_t=g_t$ a.s., implying $\mathrm{I}_g\to0$ as $\eps\to0$.
\end{proof}

The path-wise arguments for probabilities $\mathrm{II}_{d}, \mathrm{II}_{g}$ in~\eqref{eq:def_I_II_III} rely on a Skorokhod space result below, stated for a metric space 
$(E,d)$. Let $D$ denote the Skorokhod space of \cadlag\  functions from $[0,\infty)$ into $E$ (i.e. $f\in D$ is right continuous 
and has left limits).
Recall that if a sequence $(f_n)_{n\in\na}$ tends to $f$ in the Skorokhod $J_1$-topology on $D$, for every $T>0$ at which $f$ is continuous 
there exists a sequence of increasing homeomorphisms $(\lambda_n)_{n\in\na}$ from $[0,T]$ to itself, satisfying  
$\sup_{t\in[0,T]}\max\{|\lambda_n(t)-t|,d(f_n(t),f(\lambda_n(t))\}\to0$ as $n\to\infty$,
see~\cite[Sec.~5]{MR838085} for details. In particular, for any sequence $t_n\to t\in [0,T)$, where $f$ is continuous at the time $t$, we have 
\begin{equation}
\label{eq:uniform_convergence}
d(f_n(t_n),f(t))\leq d(f_n(t_n),f(\lambda_n(t_n)))+d(f(\lambda_n(t_n)),f(t))\to0\quad\text{as $n\to\infty$.}
\end{equation}

Using the convention $\inf\emptyset=\infty$, for any set $U\subset E$, define the hitting time (resp. left-limit hitting time)  of $U$ by a function $f\in D$ as follows:
\begin{equation}
\label{eq:def_T_U(f)}
\imf{T_U}{f}\coloneqq \inf\set{t\geq 0: \imf{f}{t}\in U}\qquad\text{(resp. 
 }\>\imf{T^-_U}{f}\coloneqq\inf\set{t\geq 0: \imf{f}{t-}\in U}\text{).}
\end{equation}
Here and throughout we define the left limit at $t>0$ by $f(t-)\coloneqq \lim_{s\uparrow t}f(t)$  and at $t=0$ by $f(0-)\coloneqq f(0)$.
It follows easily from the definition in~\eqref{eq:def_T_U(f)} that for any
open set $O$ in $E$ the following holds: if $t$ satisfies $f(t)\in O$ (resp. $f(t-)\in O$), then $T_O^-(f)\leq t$ (resp. $T_O(f)\leq t$). Thus any 
\cadlag\ function $f$ satisfies 
\begin{equation}
    \label{eq:T=T^-}
    T_O(f)=T_O^-(f)\qquad\text{for any open set $O$ in $E$.}
\end{equation}

\begin{lemma}
\label{lemmaSkorokhodSpaceHittingTimeOfOpenSet}
Let $O\subset E$ be open and denote its closure in $E$ by $C$.
Let $f\in D$ satisfy
\begin{equation}
\label{eq:Skorkhod_lem_ass1}
\imf{T_O}{f}\leq \min\{\imf{T_C}{f} , \imf{T^-_{C}}{f}\}. \end{equation}
If   a sequence $\paren{f_n}_{n\in\na}$ in $D$  converges in $J_1$-topology to $f$, then
$\imf{T_O}{f_n}\to \imf{T_O}{f}$ as $n\to\infty$ in $[0,\infty]$. 
If, in addition, 
%\comment{it holds } 
we assume
\begin{equation}
\label{eq:Skorkhod_lem_ass2}
T_O(f)<\infty\qquad\text{and}\qquad    f(T_O(f))=f(T_O(f)-), 
\end{equation}
then $\imf{T_O}{f_n}<\infty$ for all large $n$ and
$ \paren{\imf{T_O}{f_n},\imf{f_n}{\imf{T_O}{f_n}}}\to \paren{\imf{T_O}{f},\imf{f}{\imf{T_O}{f}}}$ as $n\to\infty$ in $[0,\infty)\times E$
\end{lemma}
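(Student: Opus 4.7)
The plan is to establish the convergence $T_O(f_n) \to T_O(f)$ in $[0,\infty]$ in two halves (a $\limsup$ bound and a $\liminf$ bound) and then to upgrade this to joint convergence with $f_n(T_O(f_n))$ under hypothesis~\eqref{eq:Skorkhod_lem_ass2}. Throughout I will draw on three standard consequences of the $J_1$-estimate $\sup_{t \in [0,T]} \max\{|\lambda_n(t)-t|, d(f_n(t), f(\lambda_n(t)))\} \to 0$ (valid on any $[0,T]$ where $f$ is continuous at $T$): (i) at every continuity point $u$ of $f$ one has $f_n(u) \to f(u)$; (ii) for any sequence $s_n \to s$, every subsequential limit of $f_n(s_n)$ lies in $\{f(s-), f(s)\}$, since $f_n(s_n) = f(\lambda_n(s_n)) + o(1)$ and $\lambda_n(s_n) \to s$; (iii) if moreover $f$ is continuous at $s$, then $f_n(s_n) \to f(s)$.

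For the $\limsup$ bound I assume $T_O(f) < \infty$ (otherwise nothing is to prove). Given $\eps > 0$, I will produce $u \in [T_O(f), T_O(f)+\eps)$ with $f(u) \in O$ and $f$ continuous at $u$: right continuity of $f$ and openness of $O$ imply that whenever $f(s) \in O$ the set $\{t: f(t) \in O\}$ contains a right-open interval at $s$, and only countably many points of that interval can be discontinuities of $f$; such an $s$ exists arbitrarily close to $T_O(f)$ from above by the definition of the infimum. Applying (i), $f_n(u) \to f(u) \in O$, and openness of $O$ then gives $f_n(u) \in O$ for all large $n$, whence $T_O(f_n) \leq u$ and thus $\limsup_n T_O(f_n) \leq T_O(f) + \eps$; sending $\eps \downarrow 0$ closes this direction.

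The main obstacle is the reverse bound $\liminf_n T_O(f_n) \geq T_O(f)$, which is precisely where~\eqref{eq:Skorkhod_lem_ass1} enters. Suppose for contradiction $\liminf_n T_O(f_n) = t^\sharp < T_O(f)$ (allowing $T_O(f) = \infty$). Along a subsequence with $T_O(f_{n_k}) \to t^\sharp$, the definition of $T_O$ lets me choose $s_k \to t^\sharp$ with $f_{n_k}(s_k) \in O \subset C$. Since $C$ is closed, every subsequential limit of $f_{n_k}(s_k)$ lies in $C$; but by (ii) it also lies in $\{f(t^\sharp-), f(t^\sharp)\}$. Consequently at least one of $f(t^\sharp-), f(t^\sharp)$ belongs to $C$, forcing $\min\{T_C(f), T_C^-(f)\} \leq t^\sharp < T_O(f)$ and contradicting~\eqref{eq:Skorkhod_lem_ass1}. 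The delicate feature is that $J_1$-convergence cannot distinguish between a left and a right limit of $f$ at a jump, so the hypothesis is tailored precisely to exclude both $f(t^\sharp)$ and $f(t^\sharp-)$ from $C$ simultaneously.

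Finally, under~\eqref{eq:Skorkhod_lem_ass2}, the already established limit $T_O(f_n) \to T_O(f) =: t^* < \infty$ gives $T_O(f_n) < \infty$ for all large $n$, and continuity of $f$ at $t^*$ (the equality $f(t^*) = f(t^*-)$) combined with (iii) applied to $t_n := T_O(f_n) \to t^*$ yields $f_n(T_O(f_n)) \to f(T_O(f))$, completing the joint convergence in $[0,\infty) \times E$.
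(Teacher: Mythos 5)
Your argument is correct. The $\limsup$ half and the upgrade to joint convergence under~\eqref{eq:Skorkhod_lem_ass2} proceed as in the paper: pick a continuity point $u$ slightly past $T_O(f)$ with $f(u)\in O$, invoke pointwise convergence at continuity points, and then use continuity of $f$ at $T_O(f)$ to push the time convergence to value convergence.

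Where you genuinely diverge from the paper is the $\liminf$ direction. The paper's proof is \emph{uniform}: it shows that $\eta:=\inf\{d(f(s),C)\wedge d(f(s-),C):s\in[0,T]\}>0$ for any $T<T_O(f)$, and then, via the $J_1$-homeomorphisms, that the approximating paths $f_n$ (and their left limits) stay at distance at least $\eta/4$ from $C$ on $[0,T]$ for all large $n$, whence $T_O(f_n)\ge T$. Your proof is instead a \emph{sequential compactness} argument: you suppose a subsequence $T_O(f_{n_k})\to t^\sharp<T_O(f)$, choose $s_k\to t^\sharp$ with $f_{n_k}(s_k)\in O\subset C$, observe via the $J_1$-estimate that $f_{n_k}(s_k)=f(\lambda_k(s_k))+o(1)$ with $\lambda_k(s_k)\to t^\sharp$, so a further subsequence of $f_{n_k}(s_k)$ converges to one of $f(t^\sharp)$ or $f(t^\sharp-)$, and then closedness of $C$ forces that point into $C$, contradicting~\eqref{eq:Skorkhod_lem_ass1}. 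Both arguments exploit the same feature (that $J_1$-convergence only pins $f_n(s_n)$ down to $\{f(s-),f(s)\}$ near a jump), but yours avoids the intermediate uniform lower bound and the two nested ``suppose-not'' compactness extractions the paper needs to establish and transfer $\eta$; it is shorter and somewhat cleaner. The trade-off is that the paper's argument delivers the slightly stronger quantitative statement that $f_n$ is \emph{uniformly} bounded away from $C$ on $[0,T]$, which could be reused elsewhere, whereas your extraction argument is tailored exactly to the claimed qualitative conclusion.
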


\begin{proof}
Under assumption~\eqref{eq:Skorkhod_lem_ass2}, $f$ is continuous at 
$\imf{T_O}{f}<\infty$. Thus, by~\eqref{eq:uniform_convergence}, the second assertion of the lemma follows from the first. We now prove that~\eqref{eq:Skorkhod_lem_ass1} implies $\imf{T_O}{f_n}\to \imf{T_O}{f}$.

We first show $\limsup_{n} \imf{T_O}{f_n}\leq \imf{T_O}{f}$. If $\imf{T_O}{f}=\infty$, this is obvious.
If $\imf{T_O}{f}<\infty$,
for any $\delta>0$, there exists a continuity point  $t'\in[\imf{T_O}{f},\imf{T_O}{f}+\delta]$ of $f$  
such that $\imf{f}{t'}\in O$.
By~\eqref{eq:uniform_convergence} (with $t_n=t'$ for all $n\in\na$) we have $\imf{f_n}{t'}\in O$ for all large  $n$, implying
 the inequality. Moreover,  since $\liminf_n \imf{T_O}{f_n}\geq0$, if $\imf{T_O}{f}=0$ then the limit $\imf{T_O}{f_n}\to 0$ holds.

We assume $\imf{T_O}{f}\in(0,\infty]$ and have to prove $\liminf_n T_O(f_n)\geq T_O(f)$. By~\eqref{eq:Skorkhod_lem_ass1},
for any $s\in [0,\imf{T_O}{f})$,
we have $\imf{f}{s}\not\in C$ and $\imf{f}{s-}\not\in C$.
Fix arbitrary $T\in[0,\imf{T_O}{f})$.
Then we have 
%there exists $\eta>0$ such that
\begin{equation}
\label{eq:def_of_eta}
   \eta:=\inf\{ d(f(s),C)\wedge d(f(s-),C):s\in[0,T]\}>0,
\end{equation}
where $u\wedge v:= \min\{u,v\}$,  $u,v\in\re$.
Indeed, if to the contrary $\eta=0$, there exists a sequence $(s_n)_{n\in\na}$ in $[0,T]$ converging to
$s'\in[0,T]$, such that $d(f(s_n),C)\wedge d(f(s_n-),C)\to0$ as $n\to\infty$. Thus 
there exists $(s_n')_{n\in\na}$ in $[0,T]$, such that  $s_n'\to s'$ and $d(f(s_n'),C)\to0$. Since $f$ is \cadlag,
$d(f(s'),C)\leq d(f(s_n'),C)+
d(f(s_n'),f(s'))$
and 
$d(f(s'-),C) \leq d(f(s_n'),C)+ d(f(s_n'),f(s'-))$,
we have
$$
d(f(s'),C)\wedge d(f(s'-),C)\leq 
d(f(s_n'),C)+
d(f(s_n'),f(s'))\wedge d(f(s_n'),f(s'-))\to0%\quad\text{as $n\to\infty$,}
$$
as $n\to\infty$, implying either $f(s')\in C$ or $f(s'-)\in C$ (recall $C$ is closed) and contradicting~\eqref{eq:Skorkhod_lem_ass1}.

We now show that~\eqref{eq:def_of_eta} implies the following claim: there exists $N\in\na$ such that  
\begin{equation}
\label{eq:uniform_bound_for_all_large_n}
    \inf\{ d(f_n(s),C)\wedge d(f_n(s-),C):s\in[0,T]\}>\eta/4\qquad\text{ for all $n\geq N$.}
\end{equation}
%$\imf{d}{\imf{f_n}{s}, C}> \eta/2$
%and $\imf{d}{\imf{f_n}{s-}, C}>\eta/2$ for any $s\in [0,t-\delta]$ and 
Assume claim~\eqref{eq:uniform_bound_for_all_large_n} is false. Then there exist sequences $(n_k)_{k\in\na}$ in $\na$, with $n_k\to\infty$, and $(s_k)_{k\in\na}$ in $[0,T]$, such that 
$d(f_{n_k}(s_k),C)\wedge d(f_{n_k}(s_k-),C)\leq 3\eta/8$ for all $k\in\na$. Hence there exists a sequence 
$(s_k')_{k\in\na}$, such that $d(f_{n_k}(s_k'),C)<\eta/2$ for all $k\in\na$.
Moreover, by passing to a subsequence, we may assume that $s_k'\to s'\in[0,T]$ as $k\to\infty$. Hence, for $k\in\na$, we have
\begin{align}
\label{eq:upper_bound_on_distance_from_C}
d(f(s'),C)\wedge d(f(s'-),C) & \leq d(f_{n_k}(s_k'),C)+d(f(s'),f_{n_k}(s_k'))\wedge d(f(s'-),f_{n_k}(s_k')).
\end{align}
Convergence $f_{n_k}\to f$ in $J_1$-topology on $D$  and 
a triangle inequality (analogous  to~\eqref{eq:uniform_convergence}, using $f(\lambda_k(s_k'))$ with  increasing homeomorphisms 
%\comment{any... or the ones that guarantee convergence? So, this argument is to get rid of the EK lemma?} 
$\lambda_k;[0,T]\to[0,T]$, $k\in\na$)  
imply
\begin{align*}
d(f(s'),f_{n_k}(s_k'))\wedge d(f(s'-),f_{n_k}(s_k')) \leq &  d(f(s'),f(\lambda_k(s_k')))\wedge d(f(s'-),f(\lambda_k(s_k')))\\ 
& + \sup_{u\in[0,T]} d(f(\lambda_k(u)),f_{n_k}(u))\to0 \text{ as $k\to\infty$.}
\end{align*}
Thus by~\eqref{eq:upper_bound_on_distance_from_C}
we obtain 
$d(f(s'),C)\wedge d(f(s'-),C)\leq \eta/2$, contradicting~\eqref{eq:def_of_eta} since $s'\in[0,T]$.

By~\eqref{eq:uniform_bound_for_all_large_n} we get 
$T_O(f_n)\geq T$ for all $n\geq N$. Since $T\in[0,\imf{T_O}{f})$ was arbitrary, we have $\liminf_n T_O(f_n)\geq T_O(f)$, concluding the proof of the lemma.
\end{proof}

Recall that the coupling $(Z_n,Z)$, where $Z_n\stackrel{d}{=}Z_1(\floor{n\cdot})/b_n$ and $Z$ denotes the self-similar CBI in~\eqref{eq_LampertiTransformation}, 
is such that $Z_n\stackrel{\text{a.s.}}{\rightarrow} Z$ as $n\to\infty$.

\begin{proof}[Proof of 
Theorem \ref{theorem_Local_time_conv} 
(case $\lim_{\eps\to 0}\lim_{n\to\infty}\mathrm{II}_{d}=0$ for $\mathrm{II}_{d}$ in~\eqref{eq:def_I_II_III})]

Pick arbitrary $\eps>0$ and $t\geq0$ and define $O:=[0,\eps)\subset E:=[0,\infty)$ and its closure $C:=[0,\eps]$. Recall $d_t^\eps(Z)=\inf\{s> t: Z(s)\in O\}$ and $d_t^\eps(Z_n)=\inf\{s> t: Z_n(s)\in O\}$.
 Since $O$ is open, note that $T_O(f)$ (defined in~\eqref{eq:def_T_U(f)}
 for any \cadlag\ 
$f:[0,\infty)\to E$) satisfies 
$T_O(f)=\inf\{t>0:f(t)\in O\}$.
Define the \cadlag\ processes $Z^t:=Z(t+\cdot)$ and $Z_n^t:=Z_n(t+\cdot)$ with the same semigroups as $Z$ and $Z_n$, respectively. Then, by Lemma~\ref{lemmaSkorokhodSpaceHittingTimeOfOpenSet}, $d_t^\eps(Z)=t+T_O(Z^t)<\infty$ and $d_t^\eps(Z_n)=t+T_O(Z_n^t)<\infty$ satisfy   
$|d_t^\eps(Z)-d_t^\eps(Z_n)|\stackrel{n\to\infty}{\longrightarrow}0$ almost surely, thus implying  Theorem~\ref{theorem_Local_time_conv} in this case (since $\mathrm{II}_{d}=\proba{|d_t^\eps(Z)-d_t^\eps(Z_n)|>\eta/3}\to0$ as $n\to\infty$, the limit in~\eqref{eq:lim} holds for $\mathrm{II}_{d}$), if we establish
\begin{equation}
\label{eq:assum_Lem_2_IId}
    T_O(Z^t)\leq T_C(Z^t)\wedge T_C^-(Z^t)\qquad\text{almost surely.}
\end{equation}

We first prove $T_O(Z^t)= T_C(Z^t)$ a.s. Since $O\subset C$, we have
$T_O(Z^t)\geq T_C(Z^t)$ and, as $T_C(Z^t)$ is a stopping time,  also $\proba{T_C(Z^t)<T_O(Z^t)}=\imf{\mathbb{E}}{\imf{\p_{Z(T_C(Z^t))}}{0<T_O(Z)}}$
as well as $Z(T_C(Z^t))\in C$ a.s. If $Z(T_C(Z^t))\in O$, then $\p_{Z(T_C(Z^t))}(0<T_O(Z))=0$. If $Z(T_C(Z^t))=\eps$, then~\cite[p.~45]{MR4463082} (under the heading \emph{Regular and instantaneous character of $z$}) yields downwords regularity of $Z$ at any positive level, implying $\p_{\eps}(0<T_O(Z))=0$.
Hence $\proba{T_C(Z^t)<T_O(Z^t)}=0$ and the equality $T_O(Z^t)= T_C(Z^t)$ holds almost surely.

We now prove  $T_C(Z^t)=T^-_C(Z^t)$ a.s. 
Notice that the downwards regularity of $Z$ in the previous paragraph and the strong Markov property at $T_C(Z^t)$ yield
 $T^-_C(Z^t)\leq T_C(Z^t)$ a.s. 
%  Hence the strict inequality $T^-_C(Z^t)<T_C(Z^t)$ can only occur if 
%  %$Z^t$ enters $C$ at level 
%  the left limit 
%  $\eps=Z^t(T^-_C(Z^t)-)$ 
% is approached from above and  $Z^t$ jumps upwards at $T^-_C(Z^t)$. 
Consider $C_\delta:=[0,\eps+\delta]$  and note that for $\delta>0$ we have $T_{C_\delta}(Z^t)\leq T^-_C(Z^t)\leq T_C(Z^t)$ a.s.
If $\delta'>\delta>0$, then $T_{C_{\delta'}}(Z^t)\leq T_{C_\delta}(Z^t)$ with strict inequality on the event $\{Z^t(T_{C_{\delta'}}(Z^t))>\delta\}$. Hence the stopping time $\tilde T:=\lim_{\delta\downarrow0}T_{C_\delta}(Z^t)$  satisfies
\begin{equation}
\label{eq:useful_inequality}
    \tilde T\leq T^-_C(Z^t)\leq T_C(Z^t)\quad\text{a.s.} 
\end{equation}
If $Z^t(0)\in C$, then by definition~\eqref{eq:def_T_U(f)} we have $T_C(Z^t)=0$. Moreover, in this case we have $T_{C_\delta}(Z^t)=0$ for all $\delta>0$, implying 
$\tilde T=0$ and, by~\eqref{eq:useful_inequality}, the equality $T^-_C(Z^t)= T_C(Z^t)$. 
If $Z^t(0)\notin C$, then $Z^t(T_{C_\delta}(Z^t))>\eps$ for all small $\delta>0$ and
 $T_{C_\delta}(Z^t)\uparrow\tilde T$ as $\delta\downarrow0$. The quasi left continuity of $Z$ at $\tilde T$ implies
$Z^t(\tilde T)=\lim_{\delta\downarrow0}
Z^t(T_{C_\delta}(Z^t))=\eps\in C$, where the last equality follows from $Z^t(T_{C_\delta}(Z^t))=\eps+\delta$ for all small $\delta>0$ (which holds since $Z$ has no negative jumps). In particular we get $\tilde T\geq T_C(Z^t)$. By~\eqref{eq:useful_inequality} we obtain
$\tilde T= T^-_C(Z^t)= T_C(Z^t)$ a.s. 
Thus~\eqref{eq:assum_Lem_2_IId} holds.
\end{proof}
%We conclude that %$T_C(Z^t)=T^-_C(Z^t)$. 
% \[
%     \tilde d^\eps_t(Z)
%     (=\inf\set{s>t: Z_{s-}\leq \eps})
%     =\inf\set{s>t: Z_{s}\leq \eps}.
% \]Since $\eps=Z(\tilde d^\eps_t(Z))$, the strong Markov property and downwards regularity imply that
% \[
%     \tilde d^\eps_t(Z)
%     =\inf\set{s>t: Z(s-)\leq \eps} 
% \]
% We can therefore apply Lemma \ref{lemmaSkorokhodSpaceHittingTimeOfOpenSet} to the \cadlag\ path $Z^t$ and with the open set $O=[0,\eps)$ and its closure $C$. 
% % Indeed, we have just proved that
% % \[
% %     d_t(Z)=t+T_O(\tilde Z)=t+T_{C}(\tilde Z)=t+T^-_C(\tilde Z). 
% % \]
% If $\tilde Z_n(s)=Z_n(t+s)$, then 
% $t+T_O(\tilde Z_n)=d^\eps_t(Z_n)$ (and analogously $t+T_0(Z^t)=d_t(Z)$), 
% so that
% $d^\eps_t(Z_n)\to d^\eps_t(Z)$ almost surely as $n\to\infty$ by Lemma \ref{lemmaSkorokhodSpaceHittingTimeOfOpenSet}. 
% We conclude that $\lim_{n\to\infty}\mathrm{II}_{d}=0$. 

%Note that in the above argument, we have %actually obtained that $Z$ is continuous at %$d^\eps_t$, 
%so that both parts of Lemma %\ref{lemmaSkorokhodSpaceHittingTimeOfOpenSet} apply. 
%This will be useful in the analysis of %$\mathrm{III}_{g,d}$. 

\begin{proof}[Proof of 
Theorem \ref{theorem_Local_time_conv}
(case $\lim_{\eps\to 0}\lim_{n\to\infty}\mathrm{II}_{g}=0$ for $\mathrm{II}_{g}$ in~\eqref{eq:def_I_II_III})] 
Let $O=[0,\eps)$, $C=[0,\eps]$ and define the \cadlag\ process $\hat Z$ by $\hat Z(s)\coloneqq Z(\max\{t-s,0\}-)$ for  $s\in[0,\infty)$. 
Note that $\hat Z$ runs along the path of $Z$ backward from time $t$ to $0$ and for $s\in[t,\infty)$ we have $\hat Z(s)= Z({0-})=Z(0)=0$. By the definitions in~\eqref{eq_thmLocalTimeLimitTheoremFromMUB22_geps_t_deps_t} of $g_t^\eps(Z)$
and in~\eqref{eq:def_T_U(f)} of $T_O^-(\hat Z)$ we obtain
\begin{align*}
g_t^\eps(Z) & =\sup\{s\in[0,t]:Z(s)\in O\}=\sup\{t-s\in[0,t]:Z(t-s)\in O\}\\
& =t-\inf\{s\in[0,t]:Z(t-s)\in O\}\\
& =t-\inf\{s\in[0,\infty):Z(\max\{t-s,0\})\in O\}
\\
&=t-\inf\{s\in[0,\infty):\hat Z(s-)\in O\}=t-T_O^-(\hat Z)=t-T_O(\hat Z),
\end{align*}
where the last equality holds by~\eqref{eq:T=T^-} since $O$ is open in $E=[0,\infty)$ and $\hat Z$ is \cadlag.
The same argument applied to the \cadlag\ process $\hat Z_n$, where $\hat Z_n(s)\coloneqq Z_n(\max\{t-s,0\}-)$ for $s\in[0,\infty)$, yields
$g_t^\eps(Z_n)=t-T_O(\hat Z_n)$.

Given the representations  
$g_t^\eps(Z)=t-T_O(\hat Z)$ and
$g_t^\eps(Z_n)=t-T_O(\hat Z_n)$,
as in the case $\mathrm{II}_{d}$, the aim is to verify Assumption~\eqref{eq:Skorkhod_lem_ass1} of Lemma~\ref{lemmaSkorokhodSpaceHittingTimeOfOpenSet},  which 
(phrased in terms of $\hat Z$) requires 
\begin{equation}
\label{eq:assumption_Lem_4_for_Z}
    T_O(\hat Z)\leq \min\{T_C(\hat Z) , T^-_{C}(\hat Z)\}.
\end{equation}
Then, by Lemma~\ref{lemmaSkorokhodSpaceHittingTimeOfOpenSet},
we have
$T_O(\hat Z_n)\to T_O(\hat Z)$ almost surely as $n\to\infty$, implying 
by definition~\eqref{eq:def_I_II_III} the limit
$\lim_{n\to\infty}\mathrm{II}_g=0$ for every $\eps>0$
and concluding the proof of this case of Theorem~\ref{theorem_Local_time_conv}.

It remains to prove the inequality in~\eqref{eq:assumption_Lem_4_for_Z}.
The key difference with the case $\mathrm{II}_{d}$ is that the dynamics of $\hat Z$ is not tractable, making its regularity at $\eps$ hard to establish. 
The solution is to rephrase~\eqref{eq:assumption_Lem_4_for_Z} in terms of the forward process $Z$ and its behaviour upon touching or approaching level $\eps$. 
Note first that $\hat Z$ only has negative jumps and hence
$T_C(\hat Z)\leq T^-_C(\hat Z)$. 
Indeed, if $s$ is such that $\hat Z(s-)\leq \eps$, 
then $\hat Z(s)=\hat Z(s-)+(\hat Z(s)-\hat Z(s-))\leq \eps$. 
Hence~\eqref{eq:assumption_Lem_4_for_Z} holds if we show $T_O(\hat Z)\leq T_C(\hat Z)$ a.s.

Recall that by definition $T_O(\hat Z)\leq t$, since $\hat Z(t)=Z(0)=0$. On the event $\{T_C(\hat Z)<T_O(\hat Z)\}$, the forward process $Z$ behaves as follows at some time $s\in[0,t]$:   \textit{either} $Z$ approaches $\eps$ from above by a left limit at $s$ and jumps up at $s$ and remains above $\eps$ on $[s,t]$ \textit{or} $Z$ touches $\eps$ from above continuously at $s$ and remains above $\eps$ on $(s,t]$. 
Hence
$$\{T_C(\hat Z)<T_O(\hat Z)\}\subset \cup_{r\in\mathbb{Q}\cap[0,t]}\{T_C(Z^r)<T_O(Z^r)\},$$
where, as shown in
the proof of $\mathrm{II}_d$ above, we have
$\p(T_C(Z^r)<T_O(Z^r))=0$ for all $r\in\mathbb{Q}\cap[0,t]$ (recall $Z^r=Z(r+\cdot)$).
Therefor $T_O(\hat Z)\leq T_C(\hat Z)\leq T^-_C(\hat Z)$ and~\eqref{eq:assumption_Lem_4_for_Z} holds.
\end{proof} 

We now proceed with the proof of the limits of the probabilities $\mathrm{III}_d$ and $\mathrm{III}_g$ in~\eqref{eq:def_I_II_III}. It is based on  Theorem~\ref{propositionConvergenceOfPairsdepsdgepsg}, which will be established in Section~\ref{sectionProofsOfAuxiliaryResults} below via uniform distributional control of forward-looking hitting times of zeros of BGWI processes.

\begin{theorem}
\label{propositionConvergenceOfPairsdepsdgepsg}
Under Assumption \ref{assumption_SL} with $\delta=\frac{d}{\alpha c}\in(0,1)$, for any $t\geq0$ and $\eps>0$, we have 
\[
    \paren{g^\eps_t(Z_n),g_t(Z_n)}
    \stackrel{d}{\to}\paren{g^\eps_t(Z),g_t(Z)}
    \quad\text{and}\quad
    \paren{d^\eps_t(Z_n),d_t(Z_n)}
    \stackrel{d}{\to}\paren{d^\eps_t(Z),d_t(Z)}\quad\text{as $n\to\infty$.}
\]
\end{theorem}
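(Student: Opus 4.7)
\textbf{Reduction.} Working on the coupling of Subsection~\ref{subsec:septuple_lim_thm} where $Z_n\to Z$ almost surely in the Skorokhod topology, the analyses of $\mathrm{II}_d$ and $\mathrm{II}_g$ above already yield the almost-sure convergences
\[
    d^\eps_t(Z_n)\to d^\eps_t(Z),\qquad g^\eps_t(Z_n)\to g^\eps_t(Z),\qquad Z_n(d^\eps_t(Z_n))\to \eps,
\]
the last identity being a consequence of the second assertion of Lemma~\ref{lemmaSkorokhodSpaceHittingTimeOfOpenSet} (whose continuity hypothesis holds since $Z$ has no negative jumps and hence enters $[0,\eps)$ continuously at height $\eps$). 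It thus remains to establish the weak convergences
\[
    R_n:=d_t(Z_n)-d^\eps_t(Z_n)\stackrel{d}{\to} R:=d_t(Z)-d^\eps_t(Z)
    \quad\text{and}\quad
    S_n:=g^\eps_t(Z_n)-g_t(Z_n)\stackrel{d}{\to} S:=g^\eps_t(Z)-g_t(Z),
\]
jointly with the a.s.\ convergences of the level-$\eps$ endpoints; since those endpoints converge almost surely, the joint statement reduces to identifying the limiting conditional laws of the residuals $R_n$ and $S_n$.

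\textbf{Forward residual $R_n$.} The time $d^\eps_t(Z_n)$ is a stopping time of the BGWI $Z_n$, so by the strong Markov property, conditional on $V_n:=Z_n(d^\eps_t(Z_n))$ the law of $R_n$ coincides with that of the hitting time of $0$ by an independent BGWI started from $\lfloor V_n b_n\rfloor$ particles, time-rescaled by $1/n$; similarly, $R$ is the hitting time of $0$ by a CBI started from $\eps$, by strong Markov at $d^\eps_t(Z)$. Since $V_n\to\eps$ a.s., it suffices to prove continuity of hitting times in the starting state: whenever $v_n\to\eps>0$, the rescaled BGWI hitting time of $0$ from $\lfloor v_n b_n\rfloor$ particles converges in distribution to the CBI hitting time of $0$ from $\eps$. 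I would prove this by reusing the scaling-limit coupling of Subsection~\ref{subsec:septuple_lim_thm} with the shifted initial condition (the branching property decouples the $\lfloor v_n b_n\rfloor$-particle progeny from the independent immigration component), then applying Lemma~\ref{lemmaSkorokhodSpaceHittingTimeOfOpenSet} at levels $\delta\downarrow 0$ together with the downward regularity of $Z$ at $0$ (which holds because $\delta=d/(\alpha c)\in(0,1)$). The quantitative backbone here is provided by the BGWI hitting-time asymptotics collected in Lemma~\ref{lemmaPreLimitAsymptotics}.

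\textbf{Backward residual $S_n$.} This is the main obstacle, because $g_t$ is not a stopping time of $Z_n$, so a direct strong-Markov decomposition at $g_t(Z_n)$ is unavailable and the reversed filtration is not that of a BGWI. My plan is to mix path-wise and distributional arguments. Inside the excursion of $Z_n$ straddling $t$, the first upcrossing of level $\eps$ after $g_t(Z_n)$ \emph{is} a stopping time, so the portion of the excursion after it can be analysed forward as in the previous paragraph; the backward step from this upcrossing down to $g_t(Z_n)$ is then accessed distributionally, through the Yaglom limit of Theorem~\ref{lemmaYaglomLimit} (which identifies the law at time $t$ of an excursion with lifetime exceeding $t$ as the Linnik law $\nu^t$) matched against the CBI excursion description in Proposition~\ref{lemmaCBIExcursionStraddlingt} (which identifies the conditional law of $Z(t)$ given $g_t(Z)=s$ as $\nu^{t-s}$). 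The coincidence of these Linnik marginals, combined with the weak convergence of BGWI hitting times from Lemma~\ref{lemmaPreLimitAsymptotics} and the self-similarity of $Z$, pins down the limiting joint law of $(g_t(Z_n),g^\eps_t(Z_n),Z_n(t))$ and yields $S_n\stackrel{d}{\to} S$. The hard part is precisely this backward identification: the non-Markovian nature of $g_t$ forces one to route the argument through the distributional Yaglom input rather than through a one-line strong-Markov decomposition as in the forward case.
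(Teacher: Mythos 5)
Your global architecture is correctly aligned with the paper's: decompose forward via the strong Markov property at the stopping time $d^\eps_t(Z_n)$, and handle the backward residual distributionally through the Yaglom limit of Theorem~\ref{lemmaYaglomLimit} matched against the CBI excursion law of Proposition~\ref{lemmaCBIExcursionStraddlingt}. However, there are several concrete gaps.

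\emph{Forward residual.} Your plan for the continuity of BGWI hitting times of zero in the initial state --- ``reusing the scaling-limit coupling \ldots then applying Lemma~\ref{lemmaSkorokhodSpaceHittingTimeOfOpenSet} at levels $\delta\downarrow 0$ together with the downward regularity of $Z$ at $0$'' --- does not work. First, $Z$ is \emph{not} downwards regular at $0$: the paper states this explicitly when motivating the whole approach (the process is nonnegative, so it cannot enter $(-\infty,0)$). You are conflating this with the regular/instantaneous character of $0$ for $\delta\in(0,1)$. Second, and more substantively, sending the level $\delta'\downarrow 0$ in Lemma~\ref{lemmaSkorokhodSpaceHittingTimeOfOpenSet} runs into exactly the $\mathrm{III}_d$-type uniformity problem: for fixed $\delta'$ the lemma gives $d^{\delta'}_0(Z_n^{z_n})\to d^{\delta'}_0(Z^z)$, but you then need $\sup_n|d^{\delta'}_0(Z_n^{z_n})-d_0(Z_n^{z_n})|$ to be small uniformly as $\delta'\downarrow0$, which is precisely what Theorem~\ref{propositionConvergenceOfPairsdepsdgepsg} is being proved to control. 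The argument is circular. The paper avoids this by a purely distributional route: Lemma~\ref{lemmaPreLimitAsymptotics} $\Rightarrow$ local limit theorem (Corollary~\ref{corPreLimitAsymptotics}) $\Rightarrow$ Lemma~\ref{propHittingTimeConvergence} ($d_{t_n}(Z_n)\stackrel{d}{\to}d_t(Z)$) $\Rightarrow$ the branching-decomposition step $d_0(Z_1^l)\stackrel{d}{=}d_{d_0(\tilde Z_1^l)}(Z_1)$ combined with Lemma~\ref{lemmaConvergenceOfExtinctionTimesOfBGW} $\Rightarrow$ Lemma~\ref{propHittingTimeConvergence_varying_Staring_points}. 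You cite Lemma~\ref{lemmaPreLimitAsymptotics} as ``the quantitative backbone'' but do not route through the local limit theorem; without that, the continuity in the initial state has no proof.

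\emph{Backward residual.} Your assertion that ``the first upcrossing of level $\eps$ after $g_t(Z_n)$ is a stopping time'' is false: $g_t$ is not a stopping time (it looks forward to $t$), so neither is any time defined by reference to it. The paper never uses a strong Markov decomposition at any such random time. Instead it exploits the deterministic-time identity
\[
\{g_t(Z_n)<s_1,\ g^\eps_t(Z_n)<s_2\}=\{g_{s_2}(Z_n)<s_1,\ t<d_{s_2}^\eps(Z_n)\},
\]
which converts the problem into a joint weak limit of $(g_s(Z_n),d_s^\eps(Z_n))$, accessed through Lemma~\ref{prop:g_t,Z(t)-limit} (itself established via the Yaglom/Linnik match) and the Markov property at the fixed time $s$. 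Your proposal omits this reduction entirely.

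\emph{Minor.} Your claim $Z_n(d^\eps_t(Z_n))\to\eps$ is not quite right: the a.s.\ limit is $Z(d^\eps_t(Z))$, which equals $\eps$ only on $\{Z(t)>\eps\}$; on $\{Z(t)\le\eps\}$ it equals $Z(t)\in(0,\eps]$. The paper handles both cases uniformly by working with the limit $z=Z(d^\eps_t(Z))>0$ rather than with $\eps$ itself.
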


Before concluding the proof of Theorem~\ref{theorem_Local_time_conv}, 
note that, as seen in the proof above of the limits of the probabilities $\mathrm{II}_d$ and $\mathrm{II}_g$ in~\eqref{eq:def_I_II_III}, 
under our coupling we in fact have almost sure convergence of the first components
in the both weak limits  of Theorem~\ref{propositionConvergenceOfPairsdepsdgepsg}.

\begin{proof}[Proof of Theorem \ref{theorem_Local_time_conv}: $\lim_{\eps\to 0}\lim_{n\to\infty}\mathrm{III}_{g,d}=0$ for $\mathrm{III}_{d}$ in~\eqref{eq:def_I_II_III}]
Theorem~\ref{propositionConvergenceOfPairsdepsdgepsg} implies the weak convergence
\[
    g^\eps_t(Z_n)-g_t(Z_n)
    \stackrel{d}{\to }g^\eps_t(Z)-g_t(Z)
    \quad\text{and}\quad
    d_t(Z_n)-d^\eps_t(Z_n)
    \stackrel{d}{\to }d_t(Z)-d^\eps_t(Z)\quad\text{as $n\to\infty$.}
\]Therefore, upon taking the limit as $n\to\infty$ of probabilities $\mathrm{III}_d$ and $\mathrm{III}_g$ in~\eqref{eq:def_I_II_III}, 
we obtain $\mathrm{I}_d$ and $\mathrm{I}_g$. 
By the preceding proof of the cases $\mathrm{I}_d$ and $\mathrm{I}_g$,  we can then take the limit as $\eps\to 0$ to get $0$, concluding the proof of Theorem~\ref{theorem_Local_time_conv}.
\end{proof}

\section{Convergence of the hitting times of BGW and BGWI: proofs of the Yaglom limit and Theorem~\ref{propositionConvergenceOfPairsdepsdgepsg}}

\label{sectionProofsOfAuxiliaryResults} 

To conclude the proof of Theorem \ref{theorem_Local_time_conv}, it remains to establish Theorem~\ref{propositionConvergenceOfPairsdepsdgepsg}.
The proof is a succession of steps which at first glance appear unrelated. We first prove the convergence of the extinction times of BGW process (Lemma~\ref{lemmaConvergenceOfExtinctionTimesOfBGW} below). We then establish the asymptotics of the probability $\proba{Z_1(n)=0}$ for the BGWI process $Z_1$ and apply Tauberian theorems to determine the asymptotics for the tails of the first return time to zero
$\proba{d_1(Z_1)>n}$
for BGWI processes
$Z_1$, see Lemma~\ref{lemmaPreLimitAsymptotics} below. We use the asymptotics of 
$\proba{Z_1(n)=0}$ and $\proba{d_1(Z_1)>n}$
 to prove a local limit theorem for $g_t(Z_n)$ (with limit $g_t(Z)$), cf. Corollary~\ref{corPreLimitAsymptotics} below. 
Using this local limit theorem and the extinction time convergence of the BGW processes, we prove the limit of the hitting times of zero for BGWI  started away from zero (cf. Lemma~\ref{propHittingTimeConvergence_varying_Staring_points} below) and then establish the weak limit of the pair $(d^\eps_t(Z_n),d_t(Z_n))$. 
The limit of $(g^\eps_t(Z_n),g_t(Z_n))$
follows from the limit in Lemma~\ref{prop:g_t,Z(t)-limit}, which in turn requires a general fact relating excursions exceeding a given length to the excursion straddling time $t$ (see~\cite[Thm~7.35]{MR525052}), the Yaglom limit for BGWIs in Theorem~\ref{lemmaYaglomLimit} (stated above)  and a characterisation of the conditional law of $Z(t)$, given $g_t(Z)=s$, in Proposition~\ref{lemmaCBIExcursionStraddlingt} below. 
It is crucial for our proof of Thereom~\ref{propositionConvergenceOfPairsdepsdgepsg} (and hence that of Theorem~\ref{theorem_Local_time_conv}) that this conditional law coincides with the limit law in the Yaglom limit of Theorem~\ref{lemmaYaglomLimit}.
The overview of the steps in the proof of Theorem~\ref{propositionConvergenceOfPairsdepsdgepsg} is given in the diagram:

\medskip
\begin{tikzpicture}[
    node distance=3em and 2em,
    block/.style={draw, rectangle, rounded corners, minimum width=3em, minimum height=2em, align=center},
    arrow/.style={-{Stealth[scale=1.2]}, thick},
       % Define a style for the subblocks.
    subblock/.style={draw, rectangle, minimum width=2em, minimum height=1em, align=center},
    % Define a style for the container that optionally surrounds the subblocks.
    container/.style={draw, rectangle, rounded corners, inner sep=0.75em}
]

% Row 1

\node[subblock] (D) {
$\paren{d^\eps_t(Z_n),d_t(Z_n)}$\\
\quad $\stackrel{d}{\to}\paren{d^\eps_t(Z),d_t(Z)}$
};
\node[subblock, above=of D] (G) { 
$\paren{g^\eps_t(Z_n),g_t(Z_n)}$\\
\quad $\stackrel{d}{\to}\paren{g^\eps_t(Z),g_t(Z)}$ 
};

\node[container, fit=(G)(D)] (T) {Theorem~\ref{propositionConvergenceOfPairsdepsdgepsg}};

\node[block, above=of G, yshift=-1em] (P12) {
Proposition~\ref{lemmaCBIExcursionStraddlingt}:\\
Conditional law of  CBI \\ 
$Z(t)$, given $g_t(Z) = s$,\\
is Linnik law
};

\node[block, right=of G, xshift=2em] (L13) {
Lemma~\ref{prop:g_t,Z(t)-limit}:\\
$(g_t(Z_n), Z_n(t))$\\
\quad $\stackrel{d}{\to} (g_t(Z), Z(t))$
% Joint convergence ($t_n\to t$):\\
% $(g_{t_n}(Z_n),d_{t_n}(Z_n))$\\
% \quad $\stackrel{d}{\to} (g_t(Z),d_t(Z))$
};

\node[block, below=of L13, yshift=1em] (L5) {
Lemma~\ref{lemmaSkorokhodSpaceHittingTimeOfOpenSet}:\\
$ \paren{\imf{T_O}{f_n},\imf{f_n}{\imf{T_O}{f_n}}}$\\
\quad $\to \paren{\imf{T_O}{f},\imf{f}{\imf{T_O}{f}}}$
};

\node[block, right=of L5, xshift=0.25em] (L10) {
Lemma~\ref{propHittingTimeConvergence}:\\
Joint convergence ($t_n\to t$):\\
$(g_{t_n}(Z_n),d_{t_n}(Z_n))$\\
\quad $\stackrel{d}{\to} (g_t(Z),d_t(Z))$
};

\node[block, below=of L5, yshift=0.75em, xshift=6.5em] (L11) {
Lemma~\ref{propHittingTimeConvergence_varying_Staring_points}:\\
Convergence of hitting times of $0$ \\
for starting points  $z_n\to z>0$\\
of BGWI/CBI: $d_0(Z_n^{z_n})\stackrel{d}{\to} d_0(Z^z)$
};

\node[block, right=of P12, xshift=-1em] (T3) {
Theorem~\ref{lemmaYaglomLimit}: \\
Yaglom limit of $Z_n(t_n)$, \\
given $d_{1/n}(Z_n) > t_n$, as \\ 
$t_n\stackrel{n\to\infty}{\longrightarrow} t>0$ is Linnik law
};

\node[block, right=of L13, xshift=1.5em] (C9) {
Corollary~\ref{corPreLimitAsymptotics}:\\
Local limit theorem\\
$g_t(Z_n)/t\stackrel{n\to\infty}{\longrightarrow} \mathrm{Beta}(1-\delta,\delta)$
};

\node[block, left=of L11] (L7) {
Lemma~\ref{lemmaConvergenceOfExtinctionTimesOfBGW}:\\
Convergence of hitting times of $0$\\
for starting points $z_n\to z$ \\
of BGW/CB: $d_0(\tilde Z^{z_n}_n)\stackrel{d}{\to} d_0(\tilde Z^z)$
};

\node[block, right=of T3, xshift=-1em] (L8) {
Lemma~\ref{lemmaPreLimitAsymptotics}: \\
For BGWI $Z_1$, asymptotics of \\
$\proba{Z_1(n)=0}$ \&
$\proba{d_1(Z_1)>n}$
};

\draw[arrow] (L11) -- (D);
\draw[arrow] (L5) -- (D);
\draw[arrow] (L5) -- (G);
\draw[arrow] (L13) -- (G);
\draw[arrow] (L10) -- (L11);
\draw[arrow] (L10) -- (L13);
\draw[arrow] (T3) -- (L13);
\draw[arrow] (P12) -- (L13);
\draw[arrow] (L7) -- (L11);
\draw[arrow] (L8) -- (C9);
\draw[arrow] (L8) -- (T3);
\draw[arrow] (C9) -- (L10);
%\draw[arrow] (G) -- (T);
%\draw[arrow] (D) -- (T);
\end{tikzpicture}

\subsection{The local limit theorem for \texorpdfstring{$g_t(Z_n)/t$}{gt(Zn)/t} and the proof of the Yaglom limit}
Given our offspring distribution $\mu$ with generating function $f$ satisfying the domain of attraction condition \ref{assumption_SL}, let $\tilde Z_1$ be a BGW process whose reproduction dynamics are governed by $\mu$. 
This can be explicitly constructed using the random walk $X_1$ (with jump distribution given by the shifted offspring distribution $\tilde \mu$) by solving recursion \eqref{eq_discreteLampertiTransformation_1} with $Y_1\equiv0$ and a starting state $l\in\na$: 
\[
    \tilde Z_1^l= l+X_1\circ \tilde C_1\quad\text{where}\quad \tilde C_1(k)=\sum_{0\leq j<k} \tilde Z^l_1(j). 
\]We will denote the law of $\tilde Z_1^l$ by $\tilde\p_l$. The branching property of $\tilde Z_1^l$  is as follows,
\[
    \tilde \p_{l_1+l_2}=\tilde \p_{l_1}*\tilde \p_{l_2}\qquad\text{for any $l_1,l_2\in\na$,}
\]where $\tilde \p_{l_1}*\tilde \p_{l_2}$ denotes the law of the sum of independent  copies of 
$\tilde Z^{l_1}_1$ and $\tilde Z^{l_2}_1$. 
Recall also that the law of $\tilde Z^l_1(n)$ is characterised in terms of the $n$-th iteration $f^{\circ n}$ of the function $f$ in Assumption~\ref{assumption_SL} with itself (for $n=0$, we define $f^{\circ 0}(s):=s$)  for all $n, l\in\na$ as follows: 
\begin{equation}
    \label{eq:BGW_transform}
    \mathbb{E}(s^{\tilde Z_1^l(n)})=f^{\circ n}(s)^l.
\end{equation}
Since $0$ is an absorbing state for $\tilde Z_1^l$ (as no immigration is present), 
the hitting time of $0$ of $\tilde Z_1^l$, which equals $d_0(\tilde Z_1^l)$, 
then satisfies
\begin{equation}
\label{eq:BGW_extiction}
    \proba{d_0(\tilde Z_1^l)\leq n}=\proba{\tilde Z^l_1(n)=0}=f^{\circ n}(0)^l,\quad\text{for all $n,l\in\na$.} 
\end{equation}
Analogous to~\eqref{eq_discreteLampertiTransformation_1}, by e.g.~\cite{MR3098685}, we can define the CB process $\tilde Z^z$ started at $z\in[0,\infty)$ as the unique solution of
\[
    \tilde Z^z(t)=z+\imf{X}{\int_0^t \tilde Z^z(s)\, ds}.
\]
By~\cite[Thm~2.1]{MR2225068}, $\tilde Z^z$ is the (large population) scaling limit of $\tilde Z^{z_n}_n$, where $\tilde Z^{z_n}_n(t)=\tilde Z^{l_n}_1(\floor{nt})/b_n$, the scaling sequence $b_n$ satisfies equation \eqref{equationDefbn},    $l_n\coloneqq \floor{b_n z}$ and hence $z_n
\coloneqq l_n/b_n\to z$ as $n\to\infty$.
It is perhaps no surprise (although we cannot find it in the literature) that the extinction times (i.e. hitting times of zero) of $\tilde Z^{z_n}$ also converge weakly to those of $\tilde Z^z$.
\begin{lemma}
\label{lemmaConvergenceOfExtinctionTimesOfBGW}
Let $\mu$ be an offspring distribution 
with the generating function $f$ satisfying~\eqref{eq:critical_gen_functions} in Assumption~\ref{assumption_SL}. 
If $l_n\in\na $ is such that $z_n:=l_n/b_n\to z>0$,  where $(b_n)$ satisfies equation \eqref{equationDefbn}, 
then, as $n\to\infty$, 
\[
    \proba{d_0(\tilde Z^{z_n}_n)\leq t}
    =\proba{d_0(\tilde Z_1^{l_n})/n \leq t}
    \to e^{-\frac{z}{(\alpha c t)^{1/\alpha}}}
    =\proba{d_0(\tilde Z^z)\leq t}\quad\text{for any $t\in[0,\infty)$.}
\]
\end{lemma}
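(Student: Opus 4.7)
The plan is to leverage the explicit extinction-time identity~\eqref{eq:BGW_extiction}, which reduces the statement to the large-$n$ analysis of the iterates $f^{\circ n}(0)$, and then to combine this with the regular variation of $(b_n)$. Setting $u_n\coloneqq 1-f^{\circ n}(0)$, the form of $f$ in~\eqref{eq:critical_gen_functions} gives the recursion
\[
u_{n+1}=1-f(1-u_n)=u_n-c\,u_n^{1+\alpha}\,\mathcal{l}(u_n),\qquad u_0=1,
\]
with $u_n$ strictly decreasing to $0$. Identity~\eqref{eq:BGW_extiction} then reads $\proba{d_0(\tilde Z_1^{l_n})/n\leq t}=(1-u_{\lfloor nt\rfloor})^{l_n}$, so everything reduces to the joint asymptotics of $l_n$ and $u_{\lfloor nt\rfloor}$.

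The analytic heart of the proof is the sharp equivalent
\[
u_n\sim (\alpha c)^{-1/\alpha}\,b_n^{-1}\quad\text{as }n\to\infty.
\]
I would obtain it by writing $v_n\coloneqq u_n^{-\alpha}$, Taylor-expanding $(1-c\,u_n^\alpha\mathcal{l}(u_n))^{-\alpha}$ to get $v_{n+1}-v_n=\alpha c\,\mathcal{l}(u_n)+O(u_n^\alpha\mathcal{l}(u_n)^2)$, then summing and invoking the slow variation of $\mathcal{l}$ (via a Karamata/Cesaro argument) to conclude $v_n\sim \alpha c\,n\,\mathcal{l}(u_n)$, i.e.\ $(1/u_n)^\alpha/\mathcal{l}(u_n)\sim \alpha c\,n$. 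Asymptotic inversion of the regularly varying function $x\mapsto x^\alpha/\mathcal{l}(1/x)$~\cite[Thm~1.5.12]{MR898871} together with the defining relation~\eqref{equationDefbn} of $(b_n)$ then pins down $1/u_n\sim (\alpha c)^{1/\alpha}\,b_n$.

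With the equivalent in hand, since $l_n/b_n\to z$ and $(b_n)$ is regularly varying of index $1/\alpha$ (so $b_{\lfloor nt\rfloor}/b_n\to t^{1/\alpha}$), I get $l_n\,u_{\lfloor nt\rfloor}\to z/(\alpha c t)^{1/\alpha}$ for $t>0$, and the elementary limit $(1-x_n)^{k_n}\to e^{-a}$ (with $x_n\to 0$ and $k_n x_n\to a$) yields convergence to $\exp(-z/(\alpha c t)^{1/\alpha})$; the case $t=0$ is trivial as $d_0(\tilde Z_1^{l_n})\geq 1$ a.s.\ when $l_n\geq 1$. The matching with $\proba{d_0(\tilde Z^z)\leq t}$ is immediate: $0$ is absorbing for the CB process, so $\{d_0(\tilde Z^z)\leq t\}=\{\tilde Z^z(t)=0\}$, and specialising~\eqref{equationLaplaceTransformOfCBI} to the CB case ($d=0$, $\delta=0$) and letting $\lambda\to\infty$ recovers exactly $\exp(-z/(\alpha c t)^{1/\alpha})$. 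The main obstacle is the sharp asymptotic of $u_n$ in the infinite-variance regime $\alpha<1$, requiring careful Karamata-type bookkeeping; in the finite-variance case $\alpha=1$ this collapses to the classical Kolmogorov equivalent $u_n\sim 1/(cn)$.
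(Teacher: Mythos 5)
Your proposal takes essentially the same route as the paper: reduce via~\eqref{eq:BGW_extiction} to the iterates $f^{\circ\lfloor nt\rfloor}(0)^{l_n}$, establish the sharp equivalent $1-f^{\circ n}(0)\sim(\alpha c)^{-1/\alpha}b_n^{-1}$, use the regular variation $b_{\lfloor nt\rfloor}/b_n\to t^{1/\alpha}$, and match the limit with the CB expression by setting $d=0$ in~\eqref{equationLaplaceTransformOfCBI}. The only real divergence is at the intermediate asymptotic: the paper simply invokes \cite[Lem.~2]{MR0228077} (Slack's lemma) to get $c(1-f^{\circ n}(0))^\alpha\,\mathcal{l}(1-f^{\circ n}(0))\sim\frac{1}{\alpha n}$ and then reads off $u_n\sim b_n$ by asymptotic inversion, whereas you re-derive that asymptotic from scratch via the recursion for $v_n=u_n^{-\alpha}$, a Taylor expansion, and a Ces\`aro/Karamata summation. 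Your derivation is correct in outline and makes the lemma self-contained, but it buries a genuinely delicate step inside the phrase ``Karamata-type bookkeeping'': one must justify $\sum_{k<n}\mathcal{l}(u_k)\sim n\,\mathcal{l}(u_n)$ and control the error term $\sum_{k<n}u_k^\alpha\mathcal{l}(u_k)^2$, neither of which is immediate when $\mathcal{l}$ is a general slowly varying function in the infinite-variance regime $\alpha<1$. The paper's choice to cite the classical result sidesteps exactly this bookkeeping, so if you go your own way you should carry out that summation estimate carefully rather than gesturing at it; otherwise the two proofs are the same in substance.
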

\begin{proof}
Under Assumption \ref{assumption_SL},  \cite[Lem.~2]{MR0228077} tells us that, as $n\to\infty$, we have 
\begin{equation}
    \label{eqSlackAsymptotics}
    c(1-f^{\circ n}(0))^\alpha \mathcal{l}(1-f^{\circ n}(0))\sim \frac{1}{\alpha n}.
\end{equation}
Define  $u_n:=(\alpha c)^{-1/\alpha}(1-f^{\circ n}(0))^{-1}$.
Since $\mathcal{l}$ is slowly varying at zero and $u_n\to\infty$ as $n\to\infty$, by~\eqref{eqSlackAsymptotics} 
we get $n\sim u_n^\alpha /\mathcal{l}(1/u_n)$. Thus the sequence $(u_n)$ is asymptotically equivalent  to an asymptotic inverse of the sequence $( n^\alpha/\mathcal{l}(1/n))$. Since,
by the property in~\eqref{equationDefbn}, the scaling sequence $(b_n)$ is also an asymptotic inverse of the sequence  $( n^\alpha/\mathcal{l}(1/n))$, we get 
%and asymptotic equivalence of the asymptotic inverse, we see that
\[
    b_n\sim u_n=(\alpha c)^{-1/\alpha}[1-f^{\circ n}(0)]^{-1}\quad\text{as $n\to\infty$.}
\]Since the sequence $(b_n)$ is regularly varying with index $1/\alpha$, the limit $b_{\floor{nt}}/b_n\to t^{1/\alpha}$ holds by definition. 
By~\eqref{eq:BGW_extiction}, 
\begin{align*}
    \proba{d_0(\tilde Z_1^{l_n})/n \leq t}
    &=f^{\circ \floor{nt}}(0)^{l_n}
    =\paren{1-(1-f^{\circ \floor{nt}}(0))}^{l_n}
    \\&\sim \paren{1- \frac{1}{(\alpha ct)^{1/\alpha}b_n}}^{b_n z}
    \to e^{-\frac{z}{(\alpha c t)^{1/\alpha}}}\quad\text{as $n\to\infty$.} 
\end{align*}

The  Laplace transforms of the one-dimensional distributions of the CBI process under $\p_z$ specialize to those of $\tilde Z^z$ upon setting $d=0$ in \eqref{equationLaplaceTransformOfCBI}. 
Since $0$ is also absorbing for $\tilde Z^z$, 
we therefore recover the expression for the hitting times of the left-hand side: 
\[
    \proba{d_0(\tilde Z^z)\leq t}
    =\proba{\tilde Z^z(t)=0}=e^{-\frac{z}{(\alpha c t)^{1/\alpha}}}. \qedhere
\]
\end{proof}

The proof of Theorem~\ref{propositionConvergenceOfPairsdepsdgepsg} requires
a result,  analogous to Lemma~\ref{lemmaConvergenceOfExtinctionTimesOfBGW}, 
for the BGWI process $Z_1$ introduced in Section~\ref{sectionIntroduction} above, cf.~\eqref{eq_discreteLampertiTransformation_1}. 
Let $Z_1^l$ be a BGWI process started at $l\in\na$ with the same offspring and immigration mechanisms as $Z_1$. In particular, note that $Z_1=Z_1^0$.
We can explicitly construct $Z^l_1$ via the discrete Lamperti transformation as in \eqref{eq_discreteLampertiTransformation_1}:
\begin{equation}
    \label{eq:Def_BGWI_started>0}
    Z^l_1=l+X_1\circ C^l_1+Y_1, 
    \quad\text{where}\quad
    C^l_1(k)=\sum_{0\leq j<k} Z^l_1(j). 
\end{equation}
Let $\p_l$ stand for the law of $Z_1^l$. 
Recall that the generating function of $Z_1^l(n)$, for any $n,l\in\na$,
can  be expressed in terms of $g$ and the iterates of $f$  
by~\eqref{eq:Def_BGWI_started>0} as follows:
\begin{equation}
    \label{eq:BDWI_product_formula}
    \imf{\mathbb{E}}{s^{Z^l_1(n)}}
    =f^{\circ n}(s)^l\cdot \prod_{0\leq m<n} g\circ f^{\circ m}(s).
\end{equation}  By~\eqref{eq:BDWI_product_formula}, we deduce the following branching property relating BGWI and BGW processes:
\begin{equation}
\label{eq:Branching_Property_BGWI}
    \p_l=\tilde \p_l*\p_0. 
\end{equation}
Differently put, 
if $Z_1$ is independent of $\tilde Z^l$, 
we have $Z_1^l\stackrel{d}{=} \tilde Z^l_1+Z_1$. 
As a consequence, we will make use of the relation of the hitting times of zero of $Z^l_1$ and $Z_1$.

\begin{lemma}
\label{lemmaPreLimitAsymptotics}
Let the offspring and immigration generating functions $f$ and $g$, respectively, satisfy Assumption \ref{assumption_SL} and denote $\delta=\frac{d}{c\alpha}$.
Then there exists a function $\mathcal{l}^*:[0,\infty)\to (0,\infty)$, slowly varying at infinity,  such that 
\begin{align}  \label{equationLocalLimitTheoremBGWIAt0}
    \proba{Z_1(n)=0}
    &\sim \mathcal{l}^*(n)n^{-\delta}\text{ as }n\to\infty.
\end{align}    
If $\delta\in(0,1)$, then we have 
\begin{align}    
\label{equationExcursionLengthAsymptoticsBGWI}
    \proba{d_1(Z_1)>n}
    &\sim\frac{1}{\Gamma(\delta)\Gamma(1-\delta) \mathcal{l}^*(n)n^{1-\delta}} = \frac{\sin(\pi\delta)}{\pi} n^{\delta-1}/\mathcal{l}^*(n)\text{ as }n\to\infty. 
\end{align}
\end{lemma}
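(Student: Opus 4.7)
The plan is to prove the two asymptotics in succession: first~\eqref{equationLocalLimitTheoremBGWIAt0} by a direct analysis of a generating-function product, then~\eqref{equationExcursionLengthAsymptoticsBGWI} via Karamata's Tauberian theorem applied to the renewal structure of the zero set of $Z_1$.

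For~\eqref{equationLocalLimitTheoremBGWIAt0}, I start from the identity
\[
\proba{Z_1(n)=0}=\prod_{m=0}^{n-1} g\bigl(f^{\circ m}(0)\bigr),
\]
obtained from~\eqref{eq:BDWI_product_formula} with $s=0$ and $l=0$. Writing $p_m:=1-f^{\circ m}(0)$, the slack asymptotic~\eqref{eqSlackAsymptotics} gives $c\,p_m^\alpha\,\mathcal{l}(p_m)\sim 1/(\alpha m)$; combined with $g(s)=1-d(1-s)^\alpha\mathcal{k}(1-s)$ and $\mathcal{k}/\mathcal{l}\to 1$, this yields
\[
-\log g\bigl(f^{\circ m}(0)\bigr)=d\,p_m^\alpha\mathcal{k}(p_m)+O(m^{-2})=\frac{\delta}{m}(1+\varepsilon_m)+O(m^{-2}),
\]
with $\delta=d/(\alpha c)$ and $\varepsilon_m\to 0$. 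Setting $\mathcal{l}^\ast(n):=n^\delta\proba{Z_1(n)=0}$, for every fixed $\lambda>0$ the identity
\[
-\log\frac{\mathcal{l}^\ast(\floor{\lambda n})}{\mathcal{l}^\ast(n)}=\sum_{m=n}^{\floor{\lambda n}-1}\Bigl(-\log g(f^{\circ m}(0))-\frac{\delta}{m}\Bigr)+\delta\Bigl(\sum_{m=n}^{\floor{\lambda n}-1}\frac{1}{m}-\log\lambda\Bigr)
\]
tends to $0$ as $n\to\infty$: the first sum is bounded by $\delta\sup_{m\geq n}|\varepsilon_m|\cdot\sum_{n\leq m<\lambda n}1/m+O(1/n)\to 0$, and the bracketed difference in the second term vanishes by the harmonic-sum asymptotic $\sum_{m=n}^{\floor{\lambda n}-1}1/m\to\log\lambda$. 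Hence $\mathcal{l}^\ast$ (extended to $[0,\infty)$ by any convenient interpolation) is slowly varying, establishing~\eqref{equationLocalLimitTheoremBGWIAt0}.

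For~\eqref{equationExcursionLengthAsymptoticsBGWI}, I exploit the renewal structure of $\set{n\in\na: Z_1(n)=0}$. By the strong Markov property the successive return times of $Z_1$ to $0$ are i.i.d.\ copies of $T:=d_1(Z_1)$, giving
\[
\sum_{n\geq 0}\proba{Z_1(n)=0}z^n=\frac{1}{1-Q(z)},\qquad Q(z):=\esp{z^T\indi{\{T<\infty\}}},\ z\in[0,1).
\]
Karamata's Abel/Tauberian theorem~\cite[Cor.~1.7.3]{MR898871}, applied to~\eqref{equationLocalLimitTheoremBGWIAt0} with index $1-\delta\in(0,1)$, gives $\sum_n\proba{Z_1(n)=0}z^n\sim\Gamma(1-\delta)\mathcal{l}^\ast(1/(1-z))(1-z)^{\delta-1}$ as $z\uparrow 1$, so
\[
1-Q(z)\sim\frac{(1-z)^{1-\delta}}{\Gamma(1-\delta)\,\mathcal{l}^\ast\!\bigl(1/(1-z)\bigr)}\to 0;
\]
in particular $T<\infty$ almost surely and $Z_1$ is null recurrent. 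Writing $1-Q(z)=(1-z)\sum_{k\geq 0}z^k\proba{T>k}$ yields
\[
\sum_{k\geq 0}z^k\proba{T>k}\sim \frac{(1-z)^{-\delta}}{\Gamma(1-\delta)\,\mathcal{l}^\ast\!\bigl(1/(1-z)\bigr)},
\]
and, since $k\mapsto\proba{T>k}$ is non-increasing, the monotone version of Karamata's Tauberian theorem delivers $\proba{T>n}\sim n^{\delta-1}/(\Gamma(\delta)\Gamma(1-\delta)\mathcal{l}^\ast(n))$. The reflection identity $\Gamma(\delta)\Gamma(1-\delta)=\pi/\sin(\pi\delta)$ then yields~\eqref{equationExcursionLengthAsymptoticsBGWI}.

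The main obstacle is the first step, where the slowly varying factor $\mathcal{l}^\ast$ must be extracted from the product $\prod g(f^{\circ m}(0))$ with enough precision to give a genuine slow-variation property and not merely a leading-order asymptotic of the form $n^{-\delta+o(1)}$; this is achieved by working at the level of ratios $\mathcal{l}^\ast(\floor{\lambda n})/\mathcal{l}^\ast(n)$ and using the uniform smallness of $\varepsilon_m$ for $m\geq n$. Once this is in hand, the Tauberian transfer from $\proba{Z_1(n)=0}$ to $\proba{T>n}$ is routine.
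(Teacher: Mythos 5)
Your proof is correct and follows essentially the same route as the paper's: start from the product formula $\proba{Z_1(n)=0}=\prod_{m<n} g(f^{\circ m}(0))$, feed in the Slack-type asymptotic~\eqref{eqSlackAsymptotics} to extract a slowly varying factor, and then apply a Karamata Tauberian argument to the renewal structure of the zero set. The only differences are cosmetic: you verify slow variation of $\mathcal{l}^\ast$ by checking the defining ratio limit directly, where the paper constructs a Karamata-representation form of $\mathcal{l}^\ast$; and you work with the power-series generating function $\sum_n \proba{Z_1(n)=0}z^n$ and the identity $1-Q(z)=(1-z)\sum_k z^k\proba{T>k}$ followed by the monotone Tauberian theorem, where the paper uses the Laplace transform $\hat U(\lambda)$ and cites Feller~[XIII\S 5, Example~c] for the final transfer to $\proba{T>n}$ — these are equivalent via $z=e^{-\lambda}$.
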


Note that~\eqref{equationLocalLimitTheoremBGWIAt0}--\eqref{equationExcursionLengthAsymptoticsBGWI} in Lemma~\ref{lemmaPreLimitAsymptotics}  imply the asymptotic equivalence
\[
    n\proba{Z_1(n)=0}\sim \frac{1}{\Gamma(\delta)\Gamma(1-\delta) \proba{d_1(Z_1)>n}},\quad\text{as $n\to\infty$.}
\]
As  $Z_n\stackrel{d}{=}Z_1(\floor{n\cdot})/b_n$
and  $d_1(Z_1)/n\stackrel{d}{=}d_{1/n}(Z_n)$,
the scaling sequences $(c_n)$ and $(\tilde c_n)$ in Theorems~\ref{theorem_Local_time_conv} and~\ref{theorem_MUB22}, respectively, are asymptotically equivalent (up to a constant factor).

\begin{proof}[Proof of Lemma~\ref{lemmaPreLimitAsymptotics}]
We first construct the regularly varying function $\mathcal{l}^*$ and prove \eqref{equationLocalLimitTheoremBGWIAt0}. 
To this end, we start with the asymptotic analysis of the product formula~\eqref{eq:BDWI_product_formula} for $l=0$:
\[
    \proba{Z_1(n)=0}=\prod_{0\leq i<n} g\circ f^{\circ i}(0).
\]
By Assumption~\ref{assumption_SL}, $\mathcal{k}(s)/\mathcal{l}(s)\to1$ as $s\to0$ and the offspring distribution $\mu$ is critical, implying $f^{\circ n}(0)\to1$ as $n\to\infty$. The asymptotic equivalence in~\eqref{eqSlackAsymptotics} (i.e.~\cite[Lem.~2]{MR0228077}) implies
\begin{equation*}
    1-g\circ f^{\circ n}(0)
    \sim d(1-f^{\circ n}(0))^\alpha \mathcal{k}(1-f^{\circ n}(0))
    \sim \frac{d\mathcal{k}(1-f^{\circ n}(0))}{\alpha c n \mathcal{l}(1-f^{\circ n}(0))}
    \sim \frac{\delta }{n}\quad\text{as $n\to\infty$.} 
\end{equation*}
Put differently, we have $1-g\circ f^{\circ n}(0)=(1+\eps(n))\delta/n$ where $\eps(n)\to 0$ as $n\to\infty$. 
Recall that for every $s\in(0,1)$ there exists $\xi_s\in(0,s)$ such that
$\log(1-s)=-s+s^2/(2(1-\xi_s)^2)$.
Then
\begin{align*}
    \proba{Z_1(n)=0}
    &=e^{\sum_{0\leq i<n} \log(1-(1-g\circ f^{\circ i}(0)) )}
    =e^{-\sum_{0< i<n}\delta/i}\cdot
    \mathcal{l}^*(n),
\end{align*}
where  
$\mathcal{l}^*(n)\coloneqq g(0) 
    e^{-\sum_{0< i<n}\delta^2(1+\eps(i))^2/(2(1-\xi_i)^2i^2)}\cdot e^{-\sum_{0< i<n}\imf{\eps}{i}\delta/i}$
and 
$0<\xi_i<1-g\circ f^{\circ i}(0)<1$ for all $i\in\na$ (recall that  $g(0)=\nu(0)\in(0,1)$ by our standing assumption; if we had $\nu(0)=0$,  the $\bgwi$ process $Z_1$ could not return to zero, cf.~Appendix~\ref{app:example_non_conv_local_time} below).
Since $1-g\circ f^{\circ i}(0)\to0$ as $i\to\infty$, the factor $g(0) 
    e^{-\sum_{0< i<n}\delta^2(1+\eps(i))^2/(2(1-\xi_i)^2i^2)}$ in the definition of $\mathcal{l}^*$ converges to a finite positive value, 
by Karamata's representation~\cite[Thm~1.3.1]{MR898871},
$\mathcal{l}^*$ is a slowly varying function.
Note that, modulo a constant, the  factor $e^{-\sum_{0< i<n}\delta/i}$ in the expression for $\proba{Z_1(n)=0}$ above is asymptotic to $n^{-\delta}$, implying \eqref{equationLocalLimitTheoremBGWIAt0}.

The next step is to prove \eqref{equationExcursionLengthAsymptoticsBGWI}, assuming $0<\delta<1$.
%\emph{Asymptotics for local extinction times. } 
We now consider the tail asymptotics for the first return to zero of $Z_1$ (which equals $d_1(Z_1)$). 
Let $U(x)=\sum_{0\leq n\leq x} \proba{Z_1(n)=0}$ be the renewal function corresponding to the returns to zero of $Z_1$. 
The (direct half of the) Karamata integral theorem (cf. \cite[Thm~1.5.11]{MR898871}) and \eqref{equationLocalLimitTheoremBGWIAt0} imply
% for $\proba{Z_1(n)=0}$, we see that
\begin{equation}
\label{eq:asymptotics_of_U}
    U(x)\sim \frac{x^{1-\delta}}{1-\delta}\mathcal{l}^*(x)\text{ as  }x\to \infty.
\end{equation}
Let $\hat U(\lambda)\coloneqq \int_{[0,\infty)} e^{-\lambda x}\, U(dx)$ be the Laplace transform of the measure associated to $U$. Since $\{Z_1(n)=0\}=\bigcup_{k\in\na}\{D_k=n\}$,
where $D_0:=0$ and $D_k:=d_{1+D_{k-1}}(Z_1)$, $k\geq 1$, and, by the strong Markov property of $Z_1$,  we have
$\mathbb{E}(e^{-\lambda D_k})=\big(\mathbb{E}(e^{-\lambda d_1(Z_1)})\big)^{k}$. It holds that
\[
    \hat U(\lambda)
    =\sum_{n\in\na} e^{-\lambda n}\proba{Z_1(n)=0}
    =\sum_{n\in\na} e^{-\lambda n}\sum_{k\in\na}\proba{D_k=n}=
    \frac{1}{1-\mathbb{E}\paren{e^{-\lambda d_1(Z_1)}}}.
\]
%(The one line proof is found in \cite[p. 53]{MR4463082}.) 
By~\eqref{eq:asymptotics_of_U}, the Karamata Tauberian theorem implies
\[
    \hat U(\lambda) \sim \Gamma(2-\delta)\frac{\lambda^{-(1-\delta)}}{1-\delta}\mathcal{l}^*(1/\lambda) 
    \quad\text{ as  }\lambda\to 0,
\]so that
\begin{equation}
\label{eq:final_laplace_assym_equiv}
    1-\mathbb{E}\paren{e^{-\lambda d_1(Z_1)}}\sim \frac{\lambda^{1-\delta}}{\Gamma(1-\delta)\mathcal{l}^*(1/\lambda)}\quad\text{ as  }\lambda\to 0.  
\end{equation}
By~\eqref{eq:final_laplace_assym_equiv} and Example~c in \cite[XIII\S 5, p.447]{MR0270403} we get the asymptotic equivalence in \eqref{equationExcursionLengthAsymptoticsBGWI}. 
\end{proof}

%\emph{Local limit theorem for left-endpoints. } 
%We now turn to the local limit theorem in \eqref{equationArcsineLocalLimitLaw}. 

\begin{corollary}
\label{corPreLimitAsymptotics}
Let Assumption \ref{assumption_SL} hold with $\delta=
\frac{d}{\alpha c}\in(0,1)$ and assume $Z_n\stackrel{d}{=}Z_1(\floor{n\cdot})/b_n$.
The following local limit theorem holds: for any $t>0$ and  any sequence $(s_n)$ such that $ns_n\in\na$ and $s_n\to s\in (0,t)$, we have
\begin{equation}
    \label{equationArcsineLocalLimitLaw}
    \proba{g_t(Z_n)=s_n}
    \sim \frac{1}{n}\frac{1}{\Gamma(\delta)\Gamma(1-\delta) s^\delta(t-s)^{1-\delta}}\text{ as }n\to\infty. 
\end{equation}%\comment{Do we need $t=0$? Probably not. }
\end{corollary}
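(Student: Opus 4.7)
The plan is to decompose the event $\{g_t(Z_n)=s_n\}$ as an event about the BGWI $Z_1$, apply the simple Markov property of $Z_1$ at its last zero, and then combine the two asymptotic estimates provided by Lemma~\ref{lemmaPreLimitAsymptotics}.

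First I would observe that since $Z_n(s)=Z_1(\floor{ns})/b_n$ is a right-continuous step function, its zero set on $[0,t]$ is the union of half-open intervals $[k/n,(k+1)/n)\cap[0,t]$ over $k\in\na$ with $k\leq\floor{nt}$ and $Z_1(k)=0$. Because $s_n\to s\in(0,t)$ and $ns_n\in\na$, for all sufficiently large $n$ we have $s_n<t$ and the equality $g_t(Z_n)=s_n$ is equivalent to the event
\[
\{Z_1(ns_n-1)=0\}\cap\{Z_1(j)\neq 0\text{ for all } ns_n\leq j\leq\floor{nt}\},
\]
i.e.\ that the last zero of $Z_1$ inside $\{0,1,\ldots,\floor{nt}\}$ occurs at the index $ns_n-1$. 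The simple Markov property of $Z_1$ at time $ns_n-1$, together with time-homogeneity and $Z_1(0)=0$, then yields the clean factorisation
\[
\proba{g_t(Z_n)=s_n}=\proba{Z_1(ns_n-1)=0}\cdot\proba{d_1(Z_1)>\floor{nt}-ns_n+1}.
\]

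Next I would plug in the two asymptotic equivalences from Lemma~\ref{lemmaPreLimitAsymptotics}. The first factor satisfies $\proba{Z_1(ns_n-1)=0}\sim \mathcal{l}^*(ns_n-1)(ns_n-1)^{-\delta}$; writing $N_n\coloneqq\floor{nt}-ns_n+1\to\infty$, the second satisfies $\proba{d_1(Z_1)>N_n}\sim \tfrac{\sin(\pi\delta)}{\pi}N_n^{\delta-1}/\mathcal{l}^*(N_n)$. Both $ns_n-1$ and $N_n$ are of the form $n\cdot c_n$ with $c_n$ bounded away from $0$ and $\infty$, so uniform convergence of the slowly varying function $\mathcal{l}^*$ on compact subsets of $(0,\infty)$ gives $\mathcal{l}^*(ns_n-1)/\mathcal{l}^*(N_n)\to 1$ as $n\to\infty$. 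The slowly varying factors therefore cancel, and combining $ns_n-1\sim ns$ with $N_n\sim n(t-s)$ produces
\[
\proba{g_t(Z_n)=s_n}\sim \frac{\sin(\pi\delta)}{\pi}\cdot\frac{1}{(ns)^{\delta}(n(t-s))^{1-\delta}}=\frac{\sin(\pi\delta)}{\pi n\, s^{\delta}(t-s)^{1-\delta}}.
\]
Euler's reflection formula $\sin(\pi\delta)/\pi=1/(\Gamma(\delta)\Gamma(1-\delta))$ then yields~\eqref{equationArcsineLocalLimitLaw}.

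I do not anticipate a substantial obstacle: the corollary is essentially a book-keeping consequence of the pair of asymptotics established in Lemma~\ref{lemmaPreLimitAsymptotics}. The only care required lies in the combinatorial identification of $\{g_t(Z_n)=s_n\}$ as two events separable by the Markov property (handling the off-by-one that arises because the step function $Z_n$ is zero on right-open intervals and is shifted by $1/n$) and in appealing to uniform convergence of $\mathcal{l}^*$ on compact subsets of $(0,\infty)$ to ensure the two occurrences of the slowly varying function cancel in the limit.
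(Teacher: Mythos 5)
Your proof is correct and follows essentially the same route as the paper: decompose $\{g_t(Z_n)=s_n\}$ via the last zero of $Z_1$ in $\{0,\ldots,\floor{nt}\}$, apply the Markov property at $ns_n-1$, and substitute the two asymptotics from Lemma~\ref{lemmaPreLimitAsymptotics}, using uniform convergence of the slowly varying $\mathcal{l}^*$ to cancel the slowly varying factors. Your bookkeeping of the off-by-one shifts (using $\floor{nt}-ns_n+1$ where the paper writes $n(t-s_n)$) is if anything a touch more careful than the paper's, but the argument is the same.
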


\begin{proof}
 The asymptotic equivalences in Lemma~\ref{lemmaPreLimitAsymptotics}
imply the local limit theorem for $g_{t}(Z_n)$ as stated in \eqref{equationArcsineLocalLimitLaw}. 
Indeed, for any sequence $(s_n)$, such that $s_n\in\na/n$ for all $n\in\na$ and $s_n\to s\in (0,t)$,  we 
have $ns_n\geq1$ for all sufficiently large $n$ and
$$\{g_t(Z_n)=s_n\}=\{g_{nt}(Z_1)/n=s_n\}=\{Z_1(ns_n-1)=0, d_{ns_n}(Z_1)>n(t-s_n)\}.$$ 
(We assume the almost sure equality $Z_n=Z_1(\floor{n\cdot})/b_n$ holds.)
By the Markov property at time $ns_n-1$, we
obtain
\begin{align*}
    \proba{g_t(Z_n)=s_n}
   &=\proba{Z_1(ns_n-1)=0}\proba{d_1(Z_1)>n(t-s_n)}
    \\
    &\sim\frac{\mathcal{l}^*(ns_n-1)(ns_n-1)^{-\delta}}{\Gamma(\delta)\Gamma(1-\delta) \mathcal{l}^*(n(t-s_n))(n(t-s_n))^{1-\delta}} \\
    &\sim \frac{1}{n}\frac{1}{\Gamma(\delta)\Gamma(1-\delta) (s_n-1/n)^\delta(t-s_n)^{1-\delta}}\\
    &\sim \frac{1}{n}\frac{1}{\Gamma(\delta)\Gamma(1-\delta) s^\delta(t-s)^{1-\delta}}\qquad\text{as $n\to\infty$,}
\end{align*}
where the first asymptotic equivalence follows from~\eqref{equationLocalLimitTheoremBGWIAt0}--\eqref{equationExcursionLengthAsymptoticsBGWI} and the second holds since $\mathcal{l}^*$ is slowly varying.
\end{proof}

The local limit theorem in Corollary~\ref{corPreLimitAsymptotics}, Scheff\'e's Lemma and a general observation about ends of excursion straddling a given time yield the following weak limit result. 

\begin{lemma}
\label{propHittingTimeConvergence}
Recall that $Z_n\stackrel{d}{=}Z_1(\floor{n\cdot})/b_n$.
Under Assumption \ref{assumption_SL} with $\delta=\frac{d}{\alpha c}\in(0,1)$, if $t_n\to t>0$, we have the weak convergence
\begin{equation}
\label{eq:weak_almost_final_limit}
    (g_{t_n}(Z_n),d_{t_n}(Z_n))\stackrel{d}{\to} (g_t(Z),d_t(Z)) \text{ as }n\to\infty. 
\end{equation}
\end{lemma}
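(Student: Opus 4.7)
The plan is to derive a joint local limit theorem for $(g_{t_n}(Z_n), d_{t_n}(Z_n))$ using the strong Markov property and the asymptotics of Lemma~\ref{lemmaPreLimitAsymptotics}, to upgrade it to the weak convergence~\eqref{eq:weak_almost_final_limit} via Scheff\'e's lemma and dominated convergence, and finally to identify the limiting joint law with that of $(g_t(Z), d_t(Z))$ through the $(1-\delta)$-stable excursion theory of the self-similar CBI $Z$ (whose zero set is a $(1-\delta)$-stable regenerative set since $\delta\in(0,1)$).

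Exactly as in the proof of Corollary~\ref{corPreLimitAsymptotics}, for $s_n\in \mathbb{N}/n$ with $s_n\to s\in(0,t)$ and $u\geq t$, the strong Markov property of $Z_1$ at $ns_n-1$ yields the factorisation
\[
\mathbb{P}\bigl(g_{t_n}(Z_n)=s_n,\, d_{t_n}(Z_n)>u\bigr)
=\mathbb{P}(Z_1(ns_n-1)=0)\cdot \mathbb{P}\bigl(d_1(Z_1)>n(u-s_n)+1\bigr).
\]
Applying equations~\eqref{equationLocalLimitTheoremBGWIAt0}--\eqref{equationExcursionLengthAsymptoticsBGWI} together with the uniform convergence theorem for slow variation~\cite[Thm~1.2.1]{MR898871} (to the slowly varying $\mathcal{l}^*$ both in the pre-factor and in the ratio
$\mathbb{P}(d_1(Z_1)>n(u-s_n)+1)/\mathbb{P}(d_1(Z_1)>n(t_n-s_n)+1)$)
delivers the joint pointwise local limit
\[
n\cdot\mathbb{P}\bigl(g_{t_n}(Z_n)=s_n,\, d_{t_n}(Z_n)>u\bigr)
\to \frac{\sin(\pi\delta)/\pi}{s^{\delta}(u-s)^{1-\delta}},
\]
uniformly on compact subsets of $\{(s,u):0<s<t\leq u\}$. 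Setting $u=t_n$ recovers the marginal local limit of Corollary~\ref{corPreLimitAsymptotics}, and the standard Scheff\'e-type argument then yields the marginal convergence $g_{t_n}(Z_n)\stackrel{d}{\to} g_t(Z)$ in total variation. Combining this with the conditional tail
\[
\mathbb{P}\bigl(d_{t_n}(Z_n)>u\,\big|\,g_{t_n}(Z_n)=s_n\bigr)
\to\Bigl(\tfrac{t-s}{u-s}\Bigr)^{1-\delta},
\]
via dominated convergence (the left-hand side is bounded by $1$ and converges uniformly on compact subsets of $(0,t)\times[t,\infty)$) produces
\[
\mathbb{P}\bigl(g_{t_n}(Z_n)\leq s,\, d_{t_n}(Z_n)>u\bigr)
\to \int_0^s \frac{\sin(\pi\delta)/\pi}{r^{\delta}(u-r)^{1-\delta}}\,dr
\]
for all $0<s<t\leq u$, which determines the joint distribution of the weak limit.

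It remains to identify the right-hand side with $\mathbb{P}(g_t(Z)\leq s,\, d_t(Z)>u)$. Since $\delta\in(0,1)$, the zero set of $Z$ is the closed range of the $(1-\delta)$-stable subordinator $L^{-1}$ (see the discussion preceding Theorem~\ref{theorem_Local_time_conv}), so by classical excursion theory for such regenerative sets (see~\cite[Thm~7.35]{MR525052}, referenced in the overview of this section), the joint density of $(g_t(Z),d_t(Z))$ equals $\tfrac{(1-\delta)\sin(\pi\delta)/\pi}{r^{\delta}(v-r)^{2-\delta}}$ on $\{0<r<t<v\}$; integrating over $\{r\leq s,\, v>u\}$ produces precisely the limit above, establishing~\eqref{eq:weak_almost_final_limit}. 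The main obstacle is securing enough uniformity in the pointwise joint local limit so that the Riemann sum on the grid $\mathbb{N}/n$ converges to the integral in both the marginal and the conditional parts simultaneously; the uniform convergence theorem for slowly varying functions applied to $\mathcal{l}^*$ is the essential technical tool, without which one would obtain only marginal weak convergence of $g_{t_n}(Z_n)$ rather than the required joint statement.
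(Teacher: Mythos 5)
Your route is correct but genuinely different from the paper's. You derive a \emph{joint} local limit theorem for $(g_{t_n}(Z_n),d_{t_n}(Z_n))$ by pushing the strong Markov factorisation one step further, pass to the joint distribution function by combining Scheff\'e with the convergence of the conditional tail, and then identify the limit by appealing to the explicit joint density of $(g_t(Z),d_t(Z))$ for a $(1-\delta)$-stable regenerative set. The paper instead stops at the \emph{marginal} weak convergence $g_{s_2}(Z_n)\stackrel{d}{\to}g_{s_2}(Z)$ (Scheff\'e applied to Corollary~\ref{corPreLimitAsymptotics}) and upgrades it to joint convergence for free via the pathwise identity
$\set{g_{t_n}(Z_n)<s_1<s_2<d_{t_n}(Z_n)}=\set{g_{s_2}(Z_n)<s_1}$ for $s_1<t_n<s_2$, followed by Portmanteau. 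That one observation replaces your entire second half: the paper never needs to know, or even mention, the joint density of $(g_t(Z),d_t(Z))$; it only uses that the marginal arcsine law has no atoms. Your approach costs the explicit formula $\frac{(1-\delta)\sin(\pi\delta)/\pi}{r^{\delta}(v-r)^{2-\delta}}$ (which you cite essentially as folklore for stable regenerative sets rather than extracting it from~\cite{MR525052}, so it would need a precise reference or a short derivation from the potential density and L\'evy measure of the inverse local time), but it rewards you with more information in the limit, namely the joint density, and a uniformity in $(s,u)$ that the paper does not claim. One small gap to close: the ``dominated convergence'' step is not an application of DCT to a fixed measure, since the law of $g_{t_n}(Z_n)$ varies with $n$; the clean argument splits $(0,s]$ into $(0,\eps]$ and $(\eps,s]$, uses the uniform convergence of the conditional tail on $(\eps,s]$ together with the total-variation convergence of $g_{t_n}(Z_n)$ supplied by Scheff\'e, and bounds the contribution from $(0,\eps]$ by $2\,\proba{g_{t_n}(Z_n)\leq\eps}\to 2\,\proba{g_t(Z)\leq\eps}\to 0$ as $\eps\downarrow 0$, using that the arcsine density is integrable near zero. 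With that patch, your proof is sound.
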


\begin{proof}
Theorem 3.3 in \cite[Ch.1\S 3, p. 30]{MR1700749} (essentially Scheff\'e's Lemma) implies that the local limit theorem in Corollary~\ref{corPreLimitAsymptotics} above yields  
a weak limit theorem for $g_{t}(Z_n)$. By~\eqref{equationArcsineLocalLimitLaw}, the density  of the limit equals $s\mapsto 1/(\Gamma(\delta)\Gamma(1-\delta)s^{\delta}(t-s)^{1-\delta})$
on the interval $s\in(0,t)$.
This weak limit is the generalised arcsine law of parameter $1-\delta$,
which corresponds to the Beta law  $\Beta(1-\delta,\delta)$. 
Proposition 13 in \cite{MR3263091} implies that the zero set of $Z$ is the range of a stable subordinator of index $1-\delta$. Therefore, we see that $g_t(Z)/t$ also has the genralised arcsine law of parameter $1-\delta$ (as shown, for example, in Proposition 3.1 of \cite{MR1746300}) and therefore
\begin{equation}
\label{eq:weak_limit_g_t}
 g_t(Z_n)\stackrel{d}{\to} g_t(Z)\text{ as }n\to\infty,\text{ for every $t\in(0,\infty)$.}
\end{equation}

%\emph{Joint weak limit theorem for endpoints.} 
Pick any $s_1<t<s_2$ straddling $t$. Since $t_n\to t$, for all large $n$ we have  $s_1<t_n<s_2$ and 
\[
    \set{g_{t_n}(Z_n)<s_1<s_2<d_{t_n}(Z_n)}=\set{g_{s_2}(Z_n)<s_1}.
\]
Since $\set{g_{t}(Z)<s_1<s_2<d_{t}(Z)}=\set{g_{s_2}(Z)<s_1}$ and, by~\eqref{eq:weak_limit_g_t}, the following limit holds
$\proba{g_{s_2}(Z_n)<s_1}\to\proba{g_{s_2}(Z)<s_1}$ 
as $n\to\infty$ for all $s_1<s_2$, since for every $s_2$ 
the limit law in~\eqref{eq:weak_limit_g_t}
has no atoms.
%$g_{s_2}(Z_n)\stackrel{d}{\to} g_{s_2}(Z)$ for all $s_2$  (the limiting arcsine law has no atoms),
By the Portmanteau Theorem, \cite[Ch.1, \S 2, Thm~2.1]{MR1700749},
we obtain the weak limit in~\eqref{eq:weak_almost_final_limit}.
In particular, note that we then also get the weak convergence of the second coordinate $d_{t_n}(Z_n)$.
\end{proof}

We establish, using the convergence of the hitting times of zero of BGWs in Lemma~\ref{lemmaConvergenceOfExtinctionTimesOfBGW},
the limit in Lemma~\ref{propHittingTimeConvergence} above and the brancing properties of both the limit and the pre-limit processes, a limit theorem for the hitting times of zero of the scaled BGWI processes started from an arbitrary positive state. 
Recall from~\eqref{eq:Def_BGWI_started>0}
the notation $Z_1^l$ for  the BGWI process started at level $l\in\na\setminus\{0\}$ and denote by $Z^z$ the CBI process started from $z>0$.  

\begin{lemma}
\label{propHittingTimeConvergence_varying_Staring_points}
Under Assumption \ref{assumption_SL} with $\delta=\frac{d}{\alpha c}\in(0,1)$,  if the sequence $(z_n)$ is such that $b_n z_n\in\na$ and $z_n\to z\in(0,\infty)$, where the scaling sequence $(b_n)$ satisfies \eqref{equationDefbn}, and $Z_n^{z_n}\stackrel{d}{=}Z_1^{b_nz_n}(\floor{n\cdot})/b_n$ we have
\begin{equation}
\label{eq:conv_moving_starting_point_d_0}
    d_0(Z_n^{z_n})\stackrel{d}{\to} d_0(Z^z) \text{ as }n\to\infty.
\end{equation}
\end{lemma}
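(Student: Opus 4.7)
The plan is to decouple $Z_n^{z_n}$ via the branching property~\eqref{eq:Branching_Property_BGWI} into an independent BGW part (absorbed at zero in finite time) and a BGWI part with the law of $Z_n$, and to use the analogous decomposition for the limit $Z^z$. The hitting time of $0$ of the sum then equals, up to an asymptotically negligible coincidence event, the hitting time of $0$ of the BGWI component after the random extinction time of the BGW component. Lemma~\ref{lemmaConvergenceOfExtinctionTimesOfBGW} handles the convergence of the extinction times and Lemma~\ref{propHittingTimeConvergence} handles the subsequent excursion endpoint.

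On an enlarged probability space, construct independent processes $\tilde Z_n^{z_n}\stackrel{d}{=}\tilde Z_1^{b_n z_n}(\floor{n\cdot})/b_n$ and $\check Z_n\stackrel{d}{=}Z_n$, so that $\tilde Z_n^{z_n}+\check Z_n\stackrel{d}{=}Z_n^{z_n}$ by~\eqref{eq:Branching_Property_BGWI}; the Laplace transform~\eqref{equationLaplaceTransformOfCBI} (with $d=0$ recovering the CB) yields the analogous decomposition $Z^z\stackrel{d}{=}\tilde Z^z+\check Z$ with $\tilde Z^z$ a CB started at $z$ and $\check Z$ an independent CBI started at $0$. Writing $T_n\coloneqq d_0(\tilde Z_n^{z_n})$ and $T\coloneqq d_0(\tilde Z^z)$, the non-negativity of both summands and the absorption of BGW/CB at $0$ yield the pathwise identity $\{Z_n^{z_n}(t)=0\}=\{t\geq T_n\}\cap\{\check Z_n(t)=0\}$, whence
\[
    d_0(Z_n^{z_n})=T_n\mathbf{1}_{\{\check Z_n(T_n)=0\}}+d_{T_n}(\check Z_n)\mathbf{1}_{\{\check Z_n(T_n)>0\}},
\]
and analogously $d_0(Z^z)=d_T(\check Z)$ almost surely, the latter by Fubini together with the independence of $T$ and $\check Z$, the absolute continuity of the law of $T$ from Lemma~\ref{lemmaConvergenceOfExtinctionTimesOfBGW}, and the fact that $\p(\check Z(t)=0)=0$ for every $t>0$, already noted in the proof of case $\mathrm{I}_d$.

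By Lemma~\ref{lemmaConvergenceOfExtinctionTimesOfBGW}, $T_n\stackrel{d}{\to}T$ with $T>0$ a.s. Together with $\check Z_n\stackrel{d}{\to}\check Z$ and the preserved independences, a product-space Skorokhod coupling yields $(T_n,\check Z_n)\to(T,\check Z)$ almost surely with $T_n\perp\check Z_n$ for every $n$. For a.e.~realisation, $T_n(\omega)\to T(\omega)\in(0,\infty)$, so Lemma~\ref{propHittingTimeConvergence} applied to $\check Z_n$ with the deterministic sequence $t_n=T_n(\omega)$ gives the conditional convergence $d_{T_n(\omega)}(\check Z_n)\stackrel{d}{\to}d_{T(\omega)}(\check Z)$; integration in $\omega$ via dominated convergence then promotes this to the unconditional weak convergence $d_{T_n}(\check Z_n)\stackrel{d}{\to}d_T(\check Z)\stackrel{d}{=}d_0(Z^z)$.

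To dispose of the coincidence indicator, note that by independence,
\[
    \p(\check Z_n(T_n)=0)=\int\proba{Z_1(\floor{nt})=0}\,\mu_{T_n}(dt),
\]
where $\mu_{T_n}$ denotes the law of $T_n$; since Lemma~\ref{lemmaPreLimitAsymptotics} gives $\proba{Z_1(\floor{nt})=0}\to 0$ uniformly on compact subsets of $(0,\infty)$ and $\mu_T$ puts no mass at $\{0\}$ or $\{\infty\}$, we get $\p(\check Z_n(T_n)=0)\to 0$. Thus $d_0(Z_n^{z_n})$ coincides with $d_{T_n}(\check Z_n)$ with probability tending to one, and~\eqref{eq:conv_moving_starting_point_d_0} follows. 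The main obstacle is the application of Lemma~\ref{propHittingTimeConvergence} along the random sequence $T_n$, which requires a coupling that simultaneously realises both almost sure convergences while preserving the BGW/BGWI independence, together with the uniform asymptotic from Lemma~\ref{lemmaPreLimitAsymptotics} to rule out the coincidence event at the random switching time.
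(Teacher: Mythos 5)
Your proposal is correct and follows essentially the same route as the paper: decompose $Z_n^{z_n}$ via the branching property~\eqref{eq:Branching_Property_BGWI} into an independent BGW part and a BGWI part, feed the BGW extinction-time convergence of Lemma~\ref{lemmaConvergenceOfExtinctionTimesOfBGW} into the moving-time hitting-time convergence of Lemma~\ref{propHittingTimeConvergence} via a Skorokhod coupling, and conclude by dominated convergence. The one place you are more careful than the paper is the coincidence event $\{\check Z_n(T_n)=0\}$: the paper passes directly to the distributional identity $d_0(Z_n^{z_n})\stackrel{d}{=}d_{d_0(\tilde Z_n^{z_n})}(Z_n)$, which with the strict-inequality convention in~\eqref{eq_thmLocalTimeLimitTheoremFromMUB22_g_t_d_t} silently ignores the (asymptotically negligible) possibility that the BGWI component is already at zero when the BGW component dies out; you write out the pathwise identity, exhibit the indicator, and show its probability vanishes using Lemma~\ref{lemmaPreLimitAsymptotics} together with tightness of $T_n$ away from $0$ and $\infty$. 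This added step is a genuine precision and does not change the overall argument.
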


\begin{proof}
The  branching property of BGWI processes in~\eqref{eq:Branching_Property_BGWI} implies that, starting from $l\in\na$, the hitting time of $0$ of $Z_1^{l}$ equals the first zero of $Z_1$ after $\tilde Z^l_1$  hits $0$ at $d_0(\tilde Z_1^l)$. 
Succinctly put, 
\begin{equation*}
    d_0(Z_1^l)\stackrel{d}{=}d_{d_0(\tilde Z_1^l)}(Z_1), \quad\text{where $Z_1$ and $\tilde Z_1^l$ are independent.}
\end{equation*}
This equality in law  also holds for $Z_n^{z_n}$ (resp. $Z^z$) with 
$\tilde Z_n^{z_n}$ and $Z_n$ (resp. $\tilde Z^z$ and $Z$) independent:
\begin{equation}
    \label{equationHittingTimesOfGWandGWI}
    d_0(Z_n^{z_n})\stackrel{d}{=}d_{d_0(\tilde Z_n^{z_n})}(Z_n)\quad\text{(resp.
    $d_0(Z^z)\stackrel{d}{=}d_{d_0(\tilde Z^z)}(Z)$).}
\end{equation}
For a bounded continuous $h:[0,\infty)\to\re$, define  bounded functions $H_n, H:[0,\infty)\to \re$ by
\[
    H_n(t):=\mathbb{E}(h(d_t(Z_n))),\qquad H(t):= \mathbb{E}(h(d_t(Z))).
\]
Since, by~\eqref{eq:weak_almost_final_limit} in Lemma~\ref{propHittingTimeConvergence},
it holds that 
$d_{t_n}(Z_n)\stackrel{d}{\to }d_{t}(Z)$ as $n\to\infty$ for any sequence $t_n\to t$, we have 
\begin{equation}
\label{eq:lim_H_n_to_H}
\text{$H_n(t_n)\to H(t)$ as $n\to\infty$. }
\end{equation}
As $z_n\to z$, the weak convergence 
$d_0(\tilde Z^{z_n}_n)\stackrel{d}{\to} d_0(\tilde Z^z)$ holds by Lemma~\ref{lemmaConvergenceOfExtinctionTimesOfBGW}.  
 By the Skorokhod representation theorem, we may assume almost sure convergence $d_0(\tilde Z^{z_n}_n)\to d_0(\tilde Z^z)$, which, together  with~\eqref{eq:lim_H_n_to_H}, yields  an almost sure limit $H_n(d_0(\tilde Z^{z_n}_n))\to H(d_0(\tilde Z^z))$. By~\eqref{equationHittingTimesOfGWandGWI}, we obtain 
\begin{align*}
    \mathbb{E}\paren{h(d_0(Z_n^{z_n}))}
    =%\int \mathbb{E}\paren{h(d_t(Z_n))}\proba{d_0(\tilde Z_n^l)\in dt} 
    \mathbb{E}\paren{H_n(d_0(\tilde Z^{z_n}_n))}
    \to \mathbb{E}\paren{H(d_0(\tilde Z^z))}
    %\int \mathbb{E}\paren{h(d_t(Z))} \proba{d_0(\tilde Z^l)\in dt}
=\mathbb{E}\paren{h(d_0(Z^z))}\quad\text{as $n\to\infty$,} 
\end{align*}
where
the limit follows by the Dominated Convergence Theorem applied to the bounded sequence of  almost surely convergent random variables $(H_n(d_0(\tilde Z^{z_n}_n)))_{n\geq1}$.
\end{proof}

\begin{remark}
Note that the weak limit in~\eqref{eq:weak_almost_final_limit} 
appears similar to the assumptions of Theorem~\ref{theorem_MUB22} (i.e.~the convergence of $g_t(Z_n)\to g_t(Z)$ and $d_t(Z_n)\to d_t(Z)$, together with $Z_n\to Z$, in probability), which we are striving to establish, in order to apply Theorem~\ref{theorem_MUB22}. However, note also that the joint weak convergence in~\eqref{eq:weak_almost_final_limit}  of Lemma~\ref{propHittingTimeConvergence}
does not imply the requisite convergence in probability by, say, the Shorokhod representation theorem, as we have not established the weak convergence of the triplet
$(g_{t_n}(Z_n),d_{t_n}(Z_n),Z_n)\stackrel{d}{\to} (g_t(Z),d_t(Z),Z)$. 
Note also that extending directly the weak convergence in Lemma~\ref{propHittingTimeConvergence} 
to that of the triplet appears difficult.
\end{remark}

We now give a proof of Theorem~\ref{lemmaYaglomLimit}, which relies in an essential way on the asymptotics in Lemma~\ref{lemmaPreLimitAsymptotics} above.

\begin{proof}[Proof of Theorem~\ref{lemmaYaglomLimit}]
Set $\rho=d_1(Z_1)$ as the first return to zero of $Z_1$. Note that by the definition in~\eqref{eq_thmLocalTimeLimitTheoremFromMUB22_g_t_d_t} above, on the event $Z_1(1)=0$ we have $\rho=1$.  Define
$N(n,s)\coloneqq \esp{1-s^{Z_1(n\wedge \rho)}}$ and note the following: $N(0,s)=0$, the right-limit as $s\downarrow0$ equals $N(n,0)\coloneqq \lim_{s\downarrow0}N(n,s)=\proba{\rho>n}=:q_n$, 
and  $1-s^{Z_1(n\wedge \rho)}=0$ on the set $\set{\rho\leq n}=\set{Z_1(n\wedge \rho)=0}$. 
Moreover, $Z_1^k(1\wedge \rho)=Z_1^k(1)$ 
for any starting point  $k\in\na$ and,
by~\eqref{eq:BDWI_product_formula}, 
we have
$$\esp{s^{Z_1^k(1\wedge \rho)}}=\esp{s^{Z_1^k(1)}}=f(s)^k\cdot g(s).$$
Hence, by conditioning on $Z_1(n\wedge \rho)$ and the strong Markov property at $n\wedge \rho$, 
we obtain
\begin{align*}
N(n+1,s)&=\esp{1-s^{Z_1((n+1)\wedge \rho)}}
=\esp{(1-s^{Z_1((n+1)\wedge \rho)})\indi{\set{Z_1(n\wedge \rho)>0}}}\\
& = \esp{\indi{\set{Z_1(n\wedge \rho)>0}}(1-f(s)^{Z_1(n\wedge \rho)}\cdot g(s))}\\
& = \esp{\indi{\set{\rho>n}}(1-g(s)+g(s)(1-f(s)^{Z_1(n\wedge \rho)})},
\end{align*}
where $\indi{A}$ denotes the indicator of the event $A$, implying the recursion 
\[
    N(n+1,s)=\bra{1-g(s)}q_n+g(s)N(n,f(s)). 
\]
It is easily seen that by induction this recursion yields
\[
    N(n+1,s)=\sum_{k=0}^n q_{n-k}\bra{1-g\circ f^{\circ k}(s)}\Pi_k(s),
\]where by~\eqref{eq:BDWI_product_formula} we have 
\begin{equation}
\label{eq:pi_k}
\Pi_k(s)\coloneqq \prod_{0\leq i<k} g\circ f^{\circ i}(s)=\esp{s^{Z_1(k)}}. 
\end{equation}
We can rewrite the above as
\begin{equation}
    \label{equationMeanderLaplaceTransorm}
    \espc{1-e^{-\lambda Z_1((n+1)\, \wedge \rho)/b_n}}{\rho>n+1}
    =\sum_{k=0}^n \frac{q_{n-k}}{q_{n+1}}\bra{1-g\circ f^{\circ k}(e^{-\lambda/b_n})}\Pi_k(e^{-\lambda/b_n}). 
\end{equation}

We now analyse the asymptotic behaviour of each of the three factors in the summands on the right-hand side of~\eqref{equationMeanderLaplaceTransorm}.
With this in mind, pick arbitrary $\gamma\in(0,1/2)$. From the asymptotics in~\eqref{equationExcursionLengthAsymptoticsBGWI} of Lemma~\ref{lemmaPreLimitAsymptotics} above and 
the uniform convergence theorem (UCT) for slowly varying functions~\cite[Ch.1\S2,~Thm~1.2.1]{MR898871},
we obtain
\begin{align}
\frac{q_{n-k}}{q_{n+1}} & \sim \frac{\mathcal{l}^*(n(1+1/n))}{\mathcal{l}^*(n(1-k/n))} \cdot
\left(\frac{1+1/n}{1-k/n}\right)^{1-\delta}\sim (1-k/n)^{-(1-\delta)}\qquad\text{ as $n\to\infty$}
\label{eq:prob_quotient}
\end{align}
uniformly in $k\in [0, (1-\gamma) n]$ (recall that $\mathcal{l}^*$ is slowly varying at infinity). 

Since $b_n^{-1}Z_1(n)\stackrel{d}{\to} Z(1)$ as $n\to\infty$, 
the corresponding Laplace transforms converge pointwise and
the sequence of laws of $b_n^{-1}Z_1(n)$ is tight. By~\eqref{equationLaplaceTransformOfCBI} we thus get
\begin{equation}
\label{eq:loc_unif_Laplace}\Pi_n(e^{-\lambda/b_n})\to (1+\alpha c  \lambda^\alpha)^{-\delta}\quad\text{ as $n\to\infty$,}
\end{equation}
locally uniformly in $\lambda\in[0,\infty)$.
Applying the UCT~\cite[Ch.1\S2,~Thm~1.2.1]{MR898871} to the regularly varying saclling sequence $(b_n)$, satisfying~\eqref{equationDefbn}, yields 
 $b_k/b_n\sim (k/n)^{1/\alpha}$ as $n\to\infty$ uniformly for $k\in [\gamma n,(1-\gamma)n]$. 
Since convergence in~\eqref{eq:loc_unif_Laplace} is locally uniform in $\lambda$, we get 
\begin{align}
 \nonumber   \Pi_k(e^{-\lambda/b_n})&=\Pi_k(e^{-\lambda(b_k/b_n)/b_k})\sim  (1+\alpha c (b_k/b_n)^\alpha \lambda^\alpha)^{-\delta}\\
 & \sim  (1+\alpha c (k/n) \lambda^\alpha)^{-\delta}\quad\text{as $n\to\infty$,}
\label{eq:pi_k_asympt}
\end{align}
uniformly in $k\in [\gamma n,(1-\gamma)n]$.

We now analyse the asymptotic behaviour of the term $1-g\circ f^{\circ k}(e^{-\lambda/b_n})$
in the summands on the right-hand side of~\eqref{equationMeanderLaplaceTransorm}.
Recall the weak convergence of the BGW processes, started at $b_n$, to the CB process started at one: 
by~\cite[Thm~2.1]{MR2225068}, the CB process $\tilde Z^1$ is the (large population) scaling limit of $\tilde Z^1_n$, where $\tilde Z^1_n(t)=\tilde Z^{b_n}_1(\floor{nt})/b_n$, the scaling sequence $(b_n)$ satisfies the asymptotic relation~\eqref{equationDefbn} and is assumed (with out loss of generality) to be in $\na$. Thus, by~\eqref{equationLaplaceTransformOfCBI} (with $d=\delta=0$) and~\eqref{eq:BGW_transform} above, the following limit holds
\[
    f^{\circ n}(e^{-\lambda /b_n})^{b_n}
    =\esp{e^{-\lambda \tilde Z^{b_n}_1(n)/b_n}} \to \esp{e^{-\lambda \tilde Z^1(1)}}=
    e^{-\lambda/(1+\alpha c t \lambda^\alpha)^{1/\alpha}}\quad\text{as $n\to\infty$.}
\]
Taking logarithms and applying Taylor expansion at $1$ yields
\begin{equation}
\label{eq:asympt_b_n_l}
    (1-f^{\circ k}(e^{-\lambda/b_k}))
    \sim \frac{\lambda}{b_k (1+\alpha c\lambda^\alpha)^{1/\alpha}}
    %\sim
    %\frac{\lambda}{(k\mathcal{l}(1/b_k)(1+\alpha c\lambda^\alpha))^{1/\alpha}}
    \qquad\text{as $k\to\infty$.}
\end{equation}
By the representation of the immigration moment generating function $g$ in~\eqref{eq:critical_gen_functions} of  Assumption~\ref{assumption_SL} and the asymptotic relation in~\eqref{eq:asympt_b_n_l}, we obtain
\begin{align}
\nonumber
  1-g\circ f^{\circ k}(e^{-\lambda/b_n})
    &= d(1-f^{\circ k}(e^{-(b_k/b_n)\lambda/b_k }))^{\alpha}\mathcal{k}(1-f^{\circ k}(e^{-(b_k/b_n)\lambda/b_k}))\\
\nonumber
    & \sim d \frac{\lambda^\alpha}{b_n^\alpha(1+\alpha c(\lambda b_k/b_n)^\alpha)}\mathcal{k}\left(\frac{1}{b_n}\frac{ \lambda}{(1+\alpha c(\lambda b_k/b_n)^\alpha)^{1/\alpha}} \right)
    \\
\nonumber    
    &\sim \frac{d}{n\mathcal{l}(1/b_n)}\frac{\lambda^\alpha }{1+\alpha c (k/n) \lambda^\alpha}\mathcal{k}(1/b_n)\\
    &
    \sim \frac{d}{n}\frac{\lambda^\alpha}{1+\alpha c (k/n) \lambda^\alpha}\qquad\text{as $n\to\infty$,}
\label{eq:final_1_over_n_term}
\end{align}
uniformly in  $k\in [\gamma n,(1-\gamma)n]$.
The second asymptotic equivalence in the last display follows from the  property 
$b_n^\alpha \sim n\mathcal{l}(1/b_n)$
in~\eqref{equationDefbn}  and the UCT~\cite[Ch.1\S2,~Thm~1.2.1]{MR898871} applied to the slowly varying function $\mathcal{k}$, while 
the third equivalence holds by the asymptotic equivalence of $\mathcal{k}$ and $\mathcal{l}$ in Assumption~\ref{assumption_SL}.

%Coming back to \eqref{equationMeanderLaplaceTransorm}, 
The asymptotic equivalences in~\eqref{eq:prob_quotient},~\eqref{eq:pi_k_asympt} and~\eqref{eq:final_1_over_n_term} imply that for every $\eps>0$ there exists $n_\eps\in\na$
such that for all $n\geq n_\eps$ the following inequalities hold, uniformly in $k\in [\gamma n,(1-\gamma)n]$:
%$$
%\frac{q_{n-k}}{q_{n+1}}\bra{1-g\circ f^{\circ k}(s)}\Pi_k(s)\leq (1+\eps)
%\frac{d}{n}\frac{\lambda^\alpha}{1+\alpha c (k/n) \lambda^\alpha} (1-k/n)^{-(1-%\delta)}(1+\alpha c (k/n) \lambda^\alpha)^{-\delta};
%$$
\begin{align*}
    (1-\eps)
\frac{d}{n}\frac{\lambda^\alpha}{1+\alpha c (k/n) \lambda^\alpha}& (1-k/n)^{-(1-\delta)}(1+\alpha c (k/n) \lambda^\alpha)^{-\delta}\\
& \leq \frac{q_{n-k}}{q_{n+1}}\bra{1-g\circ f^{\circ k}(s)}\Pi_k(s) \\
& \leq (1+\eps)
\frac{d}{n}\frac{\lambda^\alpha}{1+\alpha c (k/n) \lambda^\alpha} (1-k/n)^{-(1-\delta)}(1+\alpha c (k/n) \lambda^\alpha)^{-\delta}.
\end{align*}
Summing the upper bounds in this display over the integers $k\in\{\lceil \gamma n\rceil,\ldots,\lfloor (1-\gamma)n\rfloor\}$ and taking the limit as $n\to\infty$ yields 
$(1+\eps)d\int_\gamma^{1-\gamma} (1-t)^{-(1-\delta)}\frac{1}{(1+\alpha c t \lambda^\alpha)^\delta} 
    \frac{\lambda^\alpha}{1+\alpha c t \lambda^\alpha}  \, dt$ (the Riemann sums converge to the integral).
The lower bound has the same limit with the factor $(1-\eps)$ instead of $(1+\eps)$.
Since $\eps>0$ was arbitrary, we obtain
\begin{equation}
	\lim_{n\to\infty}\sum_{k=\lceil\gamma n\rceil}^{\lfloor(1-\gamma)n\rfloor} \frac{q_{n-k}}{q_{n+1}}\bra{1-g\circ f^{\circ k}(s)}\Pi_k(s)
	=d\int_\gamma^{1-\gamma} (1-t)^{-(1-\delta)}\frac{1}{(1+\alpha c t \lambda^\alpha)^\delta} 
    \frac{\lambda^\alpha}{1+\alpha c t \lambda^\alpha}  \, dt.
\label{eq:most_of_integral}
\end{equation}

The limit of the right-hand side in~\eqref{equationMeanderLaplaceTransorm}
requires the analysis of terms corresponding to $k\in[0,\gamma n]\cup[(1-\gamma)n,n]$. Recall $f(s)=\sum_{n\in\na}s^n\mu(n)$, $s\in[0,1)$, where $\mu$ is a critical (i.e. $\sum_{n\in\na} n\mu(n)=1$) offspring distribution on $\na$, satisfying $\mu(1)<1$. Since $f(0)=\mu(0)<1$, $f(1)=\mu(\na)=1$, $f'(0)=\mu(1)\in(0,1)$,
$f'(s)\uparrow 1$ as $s\uparrow1$ and $f''>0$ on $(0,1)$ (making $f$ convex), the sequence 
$(f^{\circ k}(s))_{k\in\na}$ is increases  for every $s\in(0,1)$. Since the immigration generating function $g$ is increasing, 
we have the following uniform bounds for all $k\in\na$:
\begin{align}
\nonumber
	0\leq 1-g\circ f^{\circ k}(e^{-\lambda/b_n})
	&\leq 1-g(e^{-\lambda/b_n})
	\sim d(1-e^{-\lambda/b_n})^\alpha \mathcal{k}(1-e^{-\lambda/b_n})\\
    \nonumber
	& \sim d\lambda^\alpha \frac{\mathcal{k}(1/b_n)}{b_n^\alpha}
    \sim d\lambda^\alpha \frac{\mathcal{k}(1/b_n)}{\mathcal{l}(1/b_n)}\frac{\mathcal{l}(1/b_n)}{b_n^\alpha}\\
    &
	\sim \frac{d\lambda^\alpha}{n}\qquad \text{as }n\to\infty.
    \label{eq:asym_bound}
\end{align}
The last equivalence follows from the asymptotic equivalence of the slowly varying functions $\mathcal{k}$ and $\mathcal{l}$ in Assumption~\ref{assumption_SL} and the property  $b_n^\alpha \sim n\mathcal{l}(1/b_n)$
in~\eqref{equationDefbn}.

Consider $k\in[0,\gamma n]$. By~\eqref{eq:pi_k} we have $\Pi_k(s)<1$ for all $s\in(0,1)$ and $k\in\na$.
By UCT~\cite[Ch.1\S2,~Thm~1.2.1]{MR898871} applied to the slowly varying function $\mathcal{l}^*$ in~\eqref{eq:prob_quotient}, the limit  $\lim_{n\to\infty}\sup_{k\leq \gamma n}\mathcal{l}^*(n+1)/\mathcal{l}^*(n(1-k/n))=1$ holds. Together with~\eqref{eq:asym_bound}, this implies
\begin{align*}
	0&\leq \limsup_{n\to\infty}\sum_{k=0}^{\lfloor \gamma n\rfloor} \frac{q_{n-k}}{q_{n+1}}\bra{1-g\circ f^{\circ k}(s)}\Pi_k(s)
	\\ 
    &\leq  \limsup_{n\to\infty} \bra{\sup_{k\leq \gamma n}\frac{\mathcal{l}^*(n+1)}{\mathcal{l}^*(n-k)}} \frac{d\lambda^\alpha}{n}  \sum_{k=0}^{\lfloor\gamma n\rfloor} (1-\frac{k}{n})^{\delta-1}
	= d\lambda^\alpha (1-(1-\gamma)^\delta)/\delta
\end{align*}
Since $\gamma\in(0,1/2)$ was arbitrary, we obtain
\begin{equation}
\label{eq:asym_bound_1}
   0\leq \limsup_{\gamma\to 0}\limsup_{n\to\infty}\sum_{k=0}^{\lfloor \gamma n\rfloor} \frac{q_{n-k}}{q_{n+1}}\bra{1-g\circ f^{\circ k}(s)}\Pi_k(s)
   \leq \limsup_{\gamma\to 0}d\lambda^\alpha (1-(1-\gamma)^\delta)/\delta
	=0. 
\end{equation}

Consider now $k\in[(1-\gamma)n,n]$. 
An integral of a regularly varying function $m\mapsto m^{\delta-1}\mathcal{l}^*(m)$ is again regularly varying with the same slowly varying function $\mathcal{l}^*$
and index $\delta$, see~\cite[Prop.~1.5.8]{MR898871}. 
Thus
by the asymptotic equivalence~\eqref{equationExcursionLengthAsymptoticsBGWI} of Lemma~\ref{lemmaPreLimitAsymptotics} above, giving the asymptotic behaviour of the tail probability $q_k$,
we obtain
\begin{equation}
\label{eq:int_slow_vary}
\delta \Gamma(\delta)\Gamma(1-\delta) \sum_{k=0}^{\floor{\gamma n}}
q_k
\sim
\delta \sum_{k=0}^{\floor{\gamma n}}k^{\delta-1}/\mathcal{l}^*(k)\sim 
 (\gamma n)^\delta/ \mathcal{l}^*(\gamma n)\sim 
\gamma^\delta n^\delta /\mathcal{l}^*(n)\quad\text{as $n\to\infty$.}    
\end{equation}
Recalling $\Pi_k(s)<1$ (for all $s\in(0,1)$ and $k\in\na$) and the asymptotic bound in~\eqref{eq:asym_bound}, the asymptotic equivalence in~\eqref{eq:int_slow_vary}
implies
\begin{align*}
	0 &\leq \limsup_{n\to\infty}\sum_{k=\ceil{(1-\gamma)n}}^{n} \frac{q_{n-k}}{q_{n+1}}\bra{1-g\circ f^{\circ k}(s)}\Pi_k(s)
    \leq\limsup_{n\to\infty}\frac{d\lambda^\alpha}{n} \sum_{k=\ceil{(1-\gamma)n}}^{n}\frac{q_{n-k}}{q_{n+1}}
	 \\&= \limsup_{n\to\infty}\frac{d\lambda^\alpha}{n} \frac{1}{q_{n+1}}\sum_{k=0}^{\floor{\gamma n}}q_k
     = \limsup_{n\to\infty} \frac{d\lambda^\alpha}{n} \frac{\gamma^\delta n^\delta}{\mathcal{l}^*(n)\delta \Gamma(\delta)\Gamma(1-\delta) } \frac{1}{q_{n+1}} 
	\\& = \limsup_{n\to\infty}\frac{d\lambda^\alpha\gamma^\delta }{\delta}
  \frac{n^{\delta-1}}{\mathcal{l}^*(n)} (n+1)^{1-\delta}\mathcal{l}^*(n+1).
\end{align*}
The  last equality in the previous display follows from the asymptotic equivalence in~\eqref{equationExcursionLengthAsymptoticsBGWI} of Lemma~\ref{lemmaPreLimitAsymptotics} applied to the tail probability $q_{n+1}$. As $\gamma\in(0,1/2)$ was arbitrary, we may take the following limit:
\begin{align}
\nonumber
0 &\leq \limsup_{\gamma\to 0}\limsup_{n\to\infty}\sum_{k=\ceil{(1-\gamma)n}}^{n} \frac{q_{n-k}}{q_{n+1}}\bra{1-g\circ f^{\circ k}(s)}\Pi_k(s)
\\& \leq \limsup_{\gamma\to 0}\limsup_{n\to\infty}\frac{d\lambda^\alpha\gamma^\delta }{\delta}
  \frac{n^{\delta-1}}{\mathcal{l}^*(n)} (n+1)^{1-\delta}\mathcal{l}^*(n+1) 
	=0. 
\label{eq:asym_bound_2}
\end{align}

The limits in~\eqref{eq:most_of_integral},~\eqref{eq:asym_bound_1} and~\eqref{eq:asym_bound_2}, together with the representation of the conditional expectation 
$\espc{1-e^{-\lambda Z_1((n+1)\, \wedge \rho)/b_n}}{\rho>n+1}$ in~\eqref{equationMeanderLaplaceTransorm}, imply
\begin{align}
\nonumber
    \lim_{n\to\infty }\espc{1-e^{-\lambda Z_1((n+1)\, \wedge \rho)/b_n}}{\rho>n+1}
    &=d\int_0^1 (1-t)^{-(1-\delta)}\frac{1}{(1+\alpha c t \lambda^\alpha)^\delta} 
    \frac{\lambda^\alpha}{1+\alpha c t \lambda^\alpha}  \, dt
    \\&\nonumber
   = d\lambda^\alpha\int_0^1 (1-t)^{\delta-1}(1+\alpha c t \lambda^\alpha)^{-(1+\delta)}
   \, dt\\ &
    =\frac{\alpha c \lambda^\alpha}{1+\alpha c\lambda^\alpha}. 
    \label{eq:explicit_formula}
\end{align}
The formula in~\eqref{eq:explicit_formula} follows from
Euler's integral formula for the hypergeometric series,
\[
	\int_0^1 (1-t)^{\delta-1}(1+zt)^{-(1+\delta)}\, dt=\Beta(1,\delta)\, _2F_1(1+\delta,1,1+\delta;-z),
\] 
and the facts that $_2F_1(1+\delta,1,1+\delta;-z)$ is in fact a geometric series equal to $1/(1-z)$ (cf. \cite[Def.~2.1.5 \& Thm~2.2.1]{MR1688958}), the Beta function $\Beta$ satisfies $\Beta(1,\delta)=1/\delta$ and  $\delta=d/\alpha c$.
%We also note that the formula in~\eqref{eq:explicit_formula} can be obtained directly via
%the binomial theorem  applied to the function $t\mapsto (1+\alpha c t \lambda^\alpha)^{-(1+\delta)}$ and term-by-term integration, which yields Beta functions that eventually cancel out to get a geometric series. 
%It is the simplest case of Euler's integral representation of the hypergeometric series $_2F_1$ (cf. \cite[Thm. 2.2.1]{MR1688958}). 
Note that the right-hand side of the equality in~\eqref{eq:explicit_formula} depends neither on $\delta$ nor on $d$.

Since the limit in~\eqref{eq:explicit_formula} is that of the Laplace transforms of probability measures on $[0,\infty)$, the convergence in~\eqref{eq:explicit_formula} holds locally uniformly in $\lambda$. 
Since the scaling sequence $(b_n)$ is regularly varying (see~\eqref{equationDefbn} above),  the limit of the Laplace transforms in Theorem~\ref{lemmaYaglomLimit} for the sequence of times $t_n=1$ with limit $t=1$ holds:
\begin{equation}
\label{eq:final_laplace_Trans_1}
	\lim_{n\to\infty} \espc{e^{-\lambda Z_1(n)/b_n}}{\rho>n}=\lim_{n\to\infty} \espc{e^{-\lambda \frac{b_n}{b_{n+1}}Z_1(n+1)/b_n}}{\rho>n+1}=\frac{1}{1+\alpha c \lambda^\alpha}. 
\end{equation}
For an arbitrary sequence of times $(t_n)$, satisfying $t_n\to t>0$, consider an eventually monotonically increasing subsequence $\floor{nt_n}$ of integers and note that the locally uniform convergence of the Laplace transforms in~\eqref{eq:final_laplace_Trans_1}
implies
\begin{align*}
    \lim_{n\to\infty} \espc{e^{-\lambda Z_1(\floor{nt_n})/b_n}}{\rho>\floor{nt_n}}
    &=\lim_{n\to\infty} \espc{e^{-\lambda(b_{\floor{nt_n}}/b_n) Z_1(\floor{nt_n})/b_{\floor{nt_n}}}}{\rho>\floor{nt_n}}
    \\&=\lim_{n\to\infty} \espc{e^{-(\lambda t^{1/\alpha} Z_1(\floor{nt_n})/b_{\floor{nt_n}}}}{\rho>\floor{nt_n}}
    =\frac{1}{1+\alpha ct \lambda^\alpha},
\end{align*}
since 
$b_{\floor{nt_n}}/b_n\to t^{1/\alpha}$ as $n\to\infty$ by~\eqref{equationDefbn} above.

Since, assuming $Z_n=Z_1(\floor{n\cdot})/b_n$, we have $\{d_{1/n}(Z_n)>t_n\}=\{\rho=d_1(Z_1)>\floor{nt_n}\}$, the formula in~\eqref{equationYaglomLimit} follows concluding the proof of the theorem.
\end{proof}

The following proposition is crucial in the proof of Theorem~\ref{propositionConvergenceOfPairsdepsdgepsg} and hence also in the proof of our main result in Theorem~\ref{theorem_Local_time_conv}.
The proof of Proposition~\ref{lemmaCBIExcursionStraddlingt} is based on the following fact from excursion theory: the excursion of $Z$ straddling a time $t$, conditioned on its age at time $t$, has the same law as the first excursion of $Z$ exceeding that given age~\cite[Thm~7.35]{MR525052}.

\begin{proposition}
\label{lemmaCBIExcursionStraddlingt}
Under Assumption \ref{assumption_SL} with $\delta=\frac{d}{\alpha c}\in(0,1)$, for any $0<s<t$, the conditional law of $Z(t)$, given $g_t(Z)=s$, is the Linnik law $\nu^{t-s}$ on positive reals with Laplace transform $\esp{e^{-\lambda Z(t)}|g_t(Z)=s}=(1+\alpha c (t-s) \lambda^\alpha)^{-1}$.
\end{proposition}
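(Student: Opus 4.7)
The plan is to combine Maisonneuve's excursion-theoretic identification (flagged in the paragraph preceding the statement) with the self-similarity of $Z$ and the generalised arcsine density of $g_t(Z)$ to derive an integral equation for the conditional Laplace transform, then to solve it using Euler's hypergeometric integral as in the closing display of the proof of Theorem~\ref{lemmaYaglomLimit} above.

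First, I would invoke \cite[Thm~7.35]{MR525052}: conditionally on the age $t - g_t(Z) = u > 0$, the excursion of $Z$ straddling time $t$ has the same law as the first excursion of $Z$ whose lifetime exceeds $u$. Denoting It\^o's excursion measure of $Z$ away from $0$ by $n$ and an excursion lifetime by $\zeta$, this identifies the conditional law of $Z(t)$ given $g_t(Z) = s$ with the law of $\epsilon(t-s)$ under the normalised measure $n(\cdot \mid \zeta > t-s)$. Write $\psi(u,\lambda) := n(e^{-\lambda \epsilon(u)} \mid \zeta > u)$. Because the CBI $Z$ is self-similar of index $1/\alpha$ under $\mathbb{P}_0$ (immediate from~\eqref{equationLaplaceTransformOfCBI}), applying the scaling $(Z(ct))_{t\geq 0} \stackrel{d}{=} (c^{1/\alpha} Z(t))_{t\geq 0}$ to the pair $(g_t(Z), Z(t))$ gives $\psi(cu,\lambda) = \psi(u, c^{1/\alpha}\lambda)$ and hence the one-parameter reduction $\psi(u,\lambda) = \phi(u^{1/\alpha}\lambda)$ with $\phi := \psi(1,\cdot)$.

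Next, using that $g_t(Z)$ has density $s^{-\delta}(t-s)^{\delta-1}/(\Gamma(\delta)\Gamma(1-\delta))$ on $(0,t)$ (already obtained in the proof of Lemma~\ref{propHittingTimeConvergence} above, via \cite[Prop.~13]{MR3263091}) and the marginal $\mathbb{E}_0[e^{-\lambda Z(t)}] = (1+\alpha c\lambda^\alpha t)^{-\delta}$ from~\eqref{equationLaplaceTransformOfCBI}, conditioning on $g_t(Z)$ and substituting $v = (t-s)/t$ yields --- thanks to the scaling form of $\psi$ --- the self-similar integral equation (in the rescaled argument $t^{1/\alpha}\lambda$, relabelled $\lambda$):
\begin{equation*}
    (1+\alpha c\lambda^\alpha)^{-\delta} = \frac{1}{\Gamma(\delta)\Gamma(1-\delta)} \int_0^1 \phi(v^{1/\alpha}\lambda)\, v^{\delta-1}(1-v)^{-\delta}\, dv.
\end{equation*}
That the candidate $\phi(\lambda) = (1+\alpha c\lambda^\alpha)^{-1}$ solves this equation is essentially a re-run of the Euler-integral step at the end of the proof of Theorem~\ref{lemmaYaglomLimit} above (using ${}_2F_1(1,\delta,1;-z) = (1+z)^{-\delta}$ with $z = \alpha c\lambda^\alpha$).

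The main obstacle will be uniqueness: verifying that the Linnik candidate solves the integral equation does not a priori rule out other solutions. I expect this to be resolved by a Mellin-transform argument --- after the further change of variables $w = v^{1/\alpha}$, the integral operator on $\phi$ becomes a fixed Mellin convolution whose multiplier is a non-vanishing Beta-type factor on an appropriate vertical strip; its injectivity on the class of Laplace transforms of probability measures on $[0,\infty)$ forces $\phi(\lambda) = (1+\alpha c\lambda^\alpha)^{-1}$. Undoing the scaling yields $\psi(t-s,\lambda) = (1+\alpha c(t-s)\lambda^\alpha)^{-1}$, the Laplace transform of the Linnik law $\nu^{t-s}$, as claimed.
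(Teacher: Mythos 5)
Your plan is essentially the paper's own proof: both invoke \cite[Thm~7.35]{MR525052}, reduce to a one-parameter unknown law $\phi$ via self-similarity, rewrite the Laplace transform identity $(1+\alpha c\lambda^\alpha)^{-\delta}=\esp{e^{-\lambda(1-G)^{1/\alpha}R}}$ using the arcsine law of $G\stackrel{d}{=}g_1(Z)$ and the marginal in~\eqref{equationLaplaceTransformOfCBI}, and extract the law of $R$ by a Mellin-transform argument. For the uniqueness step you flag as the main obstacle, the paper gives a concrete, computation-free execution that you may prefer to the abstract injectivity sketch: it writes $1-G\stackrel{d}{=}\Theta_\delta/\Theta_1$ as a beta-gamma ratio independent of the sum $\Theta_1$, represents the left-hand side as $(\alpha c\,\Theta_\delta)^{1/\alpha}\Sigma$ with independent gamma and positive-stable factors, multiplies both sides of the resulting identity in law by an independent copy of $(\Theta_\delta/\Theta_1)^{1/\alpha}$, and cancels the (finite on $[0,\alpha)$) Mellin transforms to deduce $R\stackrel{d}{=}(\alpha c\,\Theta_1)^{1/\alpha}\Sigma$ directly, sidestepping both the Euler-integral verification and the need to delimit the strip on which the Beta multiplier is non-vanishing.
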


\begin{proof}[Proof of Proposition~\ref{lemmaCBIExcursionStraddlingt}]
Let $\tau_l(Z):=\inf\set{t\geq 0: t-g_t(Z)>l}$ be the first time an excursion away from $0$ of $Z$ exceeds length $l$. 
As mentioned above, Theorem~7.35 of \cite{MR525052} shows that the conditional law of $Z(t)$, given $g_t(Z)=t-l$, equals the law of $Z(\tau_{l}(Z))$. This implies
\begin{equation}
\label{eq:last_zero_Decomp}
\esp{e^{-\lambda Z(t)}}
	=\int_0^t \proba{g_t(Z)\in ds} \esp{e^{-\lambda Z(\tau_{t-s}(Z))}}\qquad\text{for all $t\geq0$.}
\end{equation}

The process $Z$ is self-similar of index $\alpha$, so that for each $c>0$ we have that the process $\bar Z^c=(\bar Z^c(t))_{t\in[0,\infty)}$, $\bar Z^c(t):=c^{-1/\alpha}Z(ct)$, satisfies $\bar Z^c\stackrel{d}{=} Z$, implying
$g_t(\bar Z^c)=g_{ct}(Z)/c$
and
%. In particular, for $c=1/t$, we have $g_t(Z)\stackrel{d}{=}tg_1(Z)$.  Moreover, 
$\tau_l(\bar Z^c)=\tau_{cl}(Z)/c$. 
Hence
$Z(\tau_l(Z))
	\stackrel{d}{=} \bar Z^c(\tau_l(\bar Z^c))=c^{-1/\alpha}Z(c\tau_l(\bar Z^c))=c^{-1/\alpha}Z(\tau_{cl}(Z))$. Setting $c=1/l$  yields
\begin{equation}
\label{eq:equalities_in_law_g_t}
	Z(\tau_l(Z))
	\stackrel{d}{=} 
	l^{1/\alpha}Z(\tau_{1}(Z))%\quad\&\quad g_t(Z)\stackrel{d}{=}tg_1(Z)
    \quad\text{for any $l>0$.}
\end{equation}
The Laplace transform of $Z(1)$ in~\eqref{equationLaplaceTransformOfCBI}, the identity in~\eqref{eq:last_zero_Decomp} (for $t=1$) 
and the equality in law in~\eqref{eq:equalities_in_law_g_t} (for $l=1-s$)
%and $A$ is generalized arcsine distributed on $(0,1)$, 
imply
\begin{align*}
(1+\alpha c \lambda^\alpha)^{-\delta}=	\esp{e^{-\lambda Z(1)}}
	&=\int_0^1 \proba{g_1(Z)\in ds} \esp{e^{-\lambda (1-s)^{1/\alpha}Z(\tau_1(Z)) }} 
%	\\&=\int_0^1 \proba{g_1(Z)\in du} \esp{e^{-\lambda t^{1/\alpha}(1-u)^{1/\alpha}Z(\tau_1(Z)) }}\\
%    &
%=\int_0^1 \proba{g_1(Z)\in du} \esp{e^{-\lambda t^{1/\alpha}Z(\tau_{1-u}(Z)) }}=
	%\int_0^1 \proba{tg_1\in ds} 
	%\esp{e^{-\lambda (t-tg_1)^{1/\alpha} R}}
	%\, ds= 
=	\esp{e^{-\lambda (1-G)^{1/\alpha} R}},
\end{align*}
where $G$ and $R$ are independent variables with the laws of $g_1(Z)$ and $Z(\tau_1(Z))$, respectively.

Let $\Theta_\delta$ (resp. $\Sigma$) be a gamma (resp. 
  positive stable) random variable with Laplace transform $\esp{e^{-\lambda \Theta_\delta}}= 1/(1+\lambda)^\delta$  (resp. $\esp{e^{-\lambda \Sigma}}= e^{-\lambda^\alpha}$) for $\lambda\geq0$. Assuming $\Theta_\delta$ and $\Sigma$ are independent, we get
$\esp{e^{-\lambda (\alpha c  \Theta_\delta)^{1/\alpha} \Sigma}}=1/(1+\alpha c \lambda^\alpha)^\delta$, which implies
\begin{equation}
\label{eq:equality_law}
	(\alpha c  \Theta_\delta)^{1/\alpha}\Sigma
	\stackrel{d}{=}
	 (1-G)^{1/\alpha} R.
\end{equation}
It follows for example from the local limit theorem in Corollary~\ref{corPreLimitAsymptotics} and Lemma~\ref{propHittingTimeConvergence} that 
$1-G$ 
has the generalised arcsine law with parameter $\delta$. Equivalently, $1-G$ has a density on $(0,1)$ proportional to $s\mapsto s^{-(1-\delta)}(1-s)^{-\delta}$, yielding the representation $1-G
\stackrel{d}{=}\Theta_\delta/\Theta_1$, where 
$\Theta_1\coloneqq \Theta_\delta+\Theta_{1-\delta}$ and  
$\Theta_\delta$, $\Theta_{1-\delta}$ 
are independent gamma distributed  random variables  with Laplace transforms $1/(1+\lambda)^\delta$, $1/(1+\lambda)^{1-\delta}$, respectively. It is well known that in this case the quotient $\Theta_\delta/\Theta_1$ is independent of $\Theta_1$, and thus also of $(\alpha c  \Theta_1 )^{1/\alpha}\Sigma$.
We may assume that $R$ is also independent of $\Theta_\delta/\Theta_1$.
Then,
by~\eqref{eq:equality_law}, we obtain
\[
    (\Theta_\delta/\Theta_1)^{1/\alpha}(\alpha c  \Theta_1 )^{1/\alpha}\Sigma
    \stackrel{d}{=}
    (\Theta_\delta/\Theta_1)^{1/\alpha} R. 
\]
Since $0<\Theta_\delta/\Theta_1<1$ almost surely and 
$\esp{(\Theta_\delta )^{s/\alpha}\Sigma^s}<\infty$
for all $s\in[0,\alpha)$,
we obtain
$$
\esp{((\alpha c  \Theta_1 )^{1/\alpha}\Sigma)^s}=
\frac{\esp{((\alpha c  \Theta_\delta)^{1/\alpha}\Sigma)^s}}{\esp{(\Theta_\delta/\Theta_1)^{s/\alpha}}}
=
\frac{\esp{(\Theta_\delta/\Theta_1)^{s/\alpha}R^s}}{\esp{(\Theta_\delta/\Theta_1)^{s/\alpha}}}=\esp{R^s}\quad\text{for all $s\in [0,\alpha)$.}
$$
In particular, since the Mellin transform determines a law on $[0,\infty)$ uniquely, we get
\[
    (\alpha c  \Theta_1)^{1/\alpha}\Sigma
    \stackrel{d}{=}
    R
\]
(cf. Exercise 1.14 in \cite{MR2016344}).
Note that $\delta$ is no longer present on the left-hand side of the last  identity, implying that the law of $R$ does not depend on $\delta$. 
Since $\Theta_1$
is exponential with unit mean, its Laplace transform equals
$\esp{e^{-\lambda \Theta_1}}=1/(1+\lambda)$, implying 
\[
\esp{e^{-\lambda R}}=\esp{e^{-\lambda (\alpha c  \Theta_1)^{1/\alpha}\Sigma}}=
\esp{e^{-\lambda^\alpha \alpha c  \Theta_1}}=
1/(1+\alpha c \lambda^{\alpha})\qquad\text{for all $\lambda\geq0$.}
\]
Hence $R$ follows the Linnik law $\nu^1$.
By~\eqref{eq:equalities_in_law_g_t} for any $l>0$ we have
$Z(\tau_l(Z)) \stackrel{d}{=}l^{1/\alpha}R$.
Hence for any $0<s<t$, we obtain
$$
\esp{e^{-\lambda Z(t)}|g_t(Z)=s}=\esp{e^{-\lambda Z(\tau_{t-s}(Z))}}=\esp{e^{-\lambda (t-s)^{1/\alpha}R}}=1/(1+\alpha c (t-s)\lambda^{\alpha}),
$$
as claimed in the lemma.
\end{proof}

\subsection{Conclusion of the proof of Theorem~\ref{propositionConvergenceOfPairsdepsdgepsg}}
Recall that Theorem~\ref{propositionConvergenceOfPairsdepsdgepsg} consists of two weak limits for any fixed $t\geq0$ and $\eps>0$ as $n\to\infty$:
\begin{align}
\label{eq:second_weak_limit_in_Thm_5}
   & (d^\eps_t(Z_n), d_t(Z_n))\stackrel{d}{\to} (d^\eps_t(Z), d_t(Z)),\\
 &   (g^\eps_t(Z_n), g_t(Z_n))\stackrel{d}{\to} (g^\eps_t(Z), g_t(Z)).
 \label{eq:first_weak_limit_in_Thm_5}
\end{align}

\subsubsection{Proof of the weak limit in~\eqref{eq:second_weak_limit_in_Thm_5}}
Note that, conditionally on $d_t^\eps(Z_n)$, the increment $d_t(Z_n)-d^\eps_t(Z_n)$ has the law of the hitting time of zero of the scaled BGWI started at $Z_n(d^\eps_t(Z_n))$.
By using two continuous and bounded functions $h_1,h_2:[0,\infty)\to\re$,  defining $H_n(z):= \mathbb{E}(h_2(d_0(Z_n^{z})))$ for any $z\in\na/b_n$, and applying the strong Markov property at the stopping time $d^\eps_t(Z_n)$, we get
\begin{align}\label{eq:joint_limit}   \mathbb{E}\paren{h_1(d^\eps_t(Z_n))h_2(d_t(Z_n)-d^\eps_t(Z_n))}    =\mathbb{E}\paren{h_1(d^\eps_t(Z_n))H_n(Z_n(d_t^\eps(Z_n)))}.  
\end{align}

In the proof of the case  $\lim_{\eps\to 0}\lim_{n\to\infty}\mathrm{II}_{d}=0$ in Section~\ref{sectionProofOverview} above we showed that Assumption~\eqref{eq:Skorkhod_lem_ass1} of Lemma~\ref{lemmaSkorokhodSpaceHittingTimeOfOpenSet} holds at $T_O(Z(t+\cdot))$, where 
$Z(t+\cdot)$ is a CBI process, started at $Z(t)$, with the same semigroup as $Z$, the open set $O$ equals the interval $[0,\eps)$
and the equality
$d_t^\eps(Z)=t+T_O(Z(t+\cdot))<\infty$
holds.
In order to verify Assumption~\eqref{eq:Skorkhod_lem_ass2} of Lemma~\ref{lemmaSkorokhodSpaceHittingTimeOfOpenSet}, we need to prove that the process $Z(t+\cdot)$ is continuous at  $T_O(Z(t+\cdot))$, which is clearly equivalent to the continuity of $Z$ at $d_t^\eps(Z)$. 
On the event $\{Z(t)> \eps\}$, $d^\eps_t(Z)$ is predicted by strictly increasing stopping times: $d^{\eps+\eps'}_t(Z)\uparrow d^\eps_t(Z)$ almost surely on $\{Z(t)> \eps\}$ as $\eps'\downarrow0$. 
On the complement $\{Z(t)\leq \eps\}$,  since $Z$ is downwards regular, we have $d^\eps_t(Z)=t$.
In both cases $Z$ is continuous at $d^\eps_t(Z)$ by quasi left continuity, implying   that assumption~\eqref{eq:Skorkhod_lem_ass2} of Lemma~\ref{lemmaSkorokhodSpaceHittingTimeOfOpenSet} also holds. Thus, by Lemma~\ref{lemmaSkorokhodSpaceHittingTimeOfOpenSet} we obtain
$(d^\eps_t(Z_n),Z_n(d^\eps_t(Z_n)))\stackrel{\text{a.s.
}}{\longrightarrow}(d^\eps_t(Z),Z(d^\eps_t(Z)))$  as $n\to\infty$.

For any sequence $(z_n)$ converging to $z\in(0,\infty)$, such that $z_nb_n\in\na$, the
weak limit in~\eqref{eq:conv_moving_starting_point_d_0}  of Lemma~\ref{propHittingTimeConvergence_varying_Staring_points} yields  
\begin{equation}
\label{eq:z_n_convergence}
H_n(z_n)= \mathbb{E}(h_2(d_0(Z_n^{z_n})))\to \mathbb{E}(h_2(d_0(Z^z)))\eqqcolon H(z)\quad\text{as $n\to\infty$.}
\end{equation}
Thus $h_1(d^\eps_t(Z_n))H_n(Z_n(d_t^\eps(Z_n)))\stackrel{\text{a.s.
}}{\longrightarrow}h_1(d^\eps_t(Z))H(Z(d^\eps_t(Z)))$  as $n\to\infty$
(recall that  $h_1$ is continuous and set
$z_n=Z_n(d_t^\eps(Z_n))\to Z(d^\eps_t(Z))=z$ in~\eqref{eq:z_n_convergence}).
Since, 
conditional on $d_t^\eps(Z)$, the increment $d_t(Z)-d^\eps_t(Z)$ has the law of the hitting time of zero of the CBI process started at $Z(d^\eps_t(Z))$ and the functions 
$h_1$ and $h_2$ are bounded, as $n\to\infty$, the Dominated Convergence Theorem implies
\begin{align*}
\mathbb{E}\paren{h_1(d^\eps_t(Z_n))H_n(Z_n(d_t^\eps(Z_n)))}
&\to\mathbb{E}\paren{h_1(d^\eps_t(Z))H(Z(d^\eps_t(Z)))}
  \\&=\mathbb{E}\paren{h_1(d^\eps_t(Z))h_2(d_t(Z)-d^\eps_t(Z))}. 
\end{align*} 
This limit and~\eqref{eq:joint_limit} 
imply the weak limit in~\eqref{eq:second_weak_limit_in_Thm_5}.

%\emph{Weak convergence of the age process } 
\subsubsection{Proof of the weak limit in~\eqref{eq:first_weak_limit_in_Thm_5}}
In the case $t=0$, the limit in~\eqref{eq:first_weak_limit_in_Thm_5} holds by definition since $Z(0)=Z_n(0)=0$ and thus $g_0(Z)=g_t^\eps(Z)=g_0(Z_n)=g_t^\eps(Z_n)=0$ (see definitions in~\eqref{eq_thmLocalTimeLimitTheoremFromMUB22_g_t_d_t} and~\eqref{eq_thmLocalTimeLimitTheoremFromMUB22_geps_t_deps_t} above).  In the remainder of the proof of~\eqref{eq:first_weak_limit_in_Thm_5} we assume $t>0$.
 
%\noindent \textbf{Claim-(gZ)}. 
\begin{lemma}
\label{prop:g_t,Z(t)-limit}
Recall that $Z_n\stackrel{d}{=}Z_1(\floor{n\cdot})/b_n$.
Under Assumption \ref{assumption_SL} with $\delta=\frac{d}{\alpha c}\in(0,1)$,
for $t>0$,
the following weak limit holds 
\begin{equation}
\label{eq:weak_limit_g_and_Z}
	(g_t(Z_n), Z_n(t))\stackrel{d}{\to} (g_t(Z), Z(t))\quad\text{as $n\to\infty$.}
\end{equation}
\end{lemma}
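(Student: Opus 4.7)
The plan is to reduce the joint weak convergence to a convergence of joint Laplace transforms, and to evaluate those by conditioning on the last zero before $t$. For any bounded continuous $h\colon[0,t]\to\re$ and any $\theta\ge 0$, I want to show $\mathbb{E}[h(g_t(Z_n))\,e^{-\theta Z_n(t)}]\to\mathbb{E}[h(g_t(Z))\,e^{-\theta Z(t)}]$. The first ingredient is the identification of the conditional Laplace transform $\psi_n(s)\coloneqq\mathbb{E}[e^{-\theta Z_n(t)}\mid g_t(Z_n)=s]$ for $s=k_n/n\in(0,t)$. Since $\{g_t(Z_n)=s\}=\{Z_1(k_n)=0\}\cap\{Z_1(j)\neq 0,\ k_n<j\le\lfloor nt\rfloor\}$, the discrete Markov property of $Z_1$ at $k_n$ turns the post-$k_n$ trajectory into an independent copy of $Z_1$ started from $0$. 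This gives
\[
\psi_n(s_n)=\mathbb{E}\bigl[e^{-\theta Z_n(t'_n)}\bigm| d_{1/n}(Z_n)>t'_n\bigr],\qquad t'_n\coloneqq\bigl(\lfloor nt\rfloor-ns_n\bigr)/n.
\]
For any $s_n\to s\in(0,t)$ with $ns_n\in\na$, we have $t'_n\to t-s>0$, so Theorem~\ref{lemmaYaglomLimit} yields $\psi_n(s_n)\to (1+\alpha c(t-s)\theta^\alpha)^{-1}$. By Proposition~\ref{lemmaCBIExcursionStraddlingt}, the same expression equals $\psi(s)\coloneqq\mathbb{E}[e^{-\theta Z(t)}\mid g_t(Z)=s]$; this coincidence of the Yaglom limit with the excursion-theoretic conditional law is what makes the whole argument close.

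Next, I would invoke the marginal weak convergence $g_t(Z_n)\stackrel{d}{\to}g_t(Z)$, which is already established inside the proof of Lemma~\ref{propHittingTimeConvergence} via the local limit theorem in Corollary~\ref{corPreLimitAsymptotics}. Crucially, the limit law is a rescaled $\Beta(1-\delta,\delta)$, with no atoms at $\{0,t\}$. By Skorokhod's representation I pass to a probability space on which $g_t(Z_n)\to g_t(Z)$ almost surely, with $g_t(Z)\in(0,t)$ almost surely. On this space, for each $\omega$ the deterministic numerical sequence $s_n(\omega)\coloneqq g_t(Z_n)(\omega)$ converges to $g_t(Z)(\omega)\in(0,t)$, so the deterministic limit of the previous paragraph gives $\psi_n(g_t(Z_n))\to\psi(g_t(Z))$ almost surely.

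To conclude, I observe that, for any bounded continuous $h$ and $\theta\ge 0$,
\[
\mathbb{E}[h(g_t(Z_n))\,e^{-\theta Z_n(t)}]=\mathbb{E}\bigl[h(g_t(Z_n))\,\psi_n(g_t(Z_n))\bigr]
\]
is a functional of the marginal law of $g_t(Z_n)$ alone. Since $|\psi_n|,|\psi|\le 1$ and $h$ is bounded, bounded convergence on the Skorokhod coupling yields convergence to $\mathbb{E}[h(g_t(Z))\,\psi(g_t(Z))]=\mathbb{E}[h(g_t(Z))\,e^{-\theta Z(t)}]$. Allowing $h$ and $\theta$ to vary, the class $\{(s,z)\mapsto h(s)e^{-\theta z}\}$ is determining on $[0,t]\times[0,\infty)$, so the joint weak convergence $(g_t(Z_n),Z_n(t))\stackrel{d}{\to}(g_t(Z),Z(t))$ follows.

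The hard part is the first paragraph: identifying $\psi_n$ through the Markov-at-$k_n$ step (being careful that $\{g_t(Z_n)=s\}$ encodes \emph{both} a visit to zero at $s$ and a forbidden return to zero in $(s,t]$), and then matching its limit with the conditional law of $Z(t)$ given $g_t(Z)=s$. Without the two nontrivial inputs Theorem~\ref{lemmaYaglomLimit} and Proposition~\ref{lemmaCBIExcursionStraddlingt}, each obtained by independent methods, this route would not close. Everything else is a standard Skorokhod-plus-bounded-convergence step, enabled by the atomless character of the limiting arcsine distribution on $(0,t)$.
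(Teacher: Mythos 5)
Your proposal is correct and follows essentially the same route as the paper: both identify the conditional law of $Z_n(t)$ given $g_t(Z_n)=s$ (via the Markov property at the last zero, which the paper phrases in the excursion-theoretic language of \cite[Thm~7.35]{MR525052} while noting it can be seen directly), apply the Yaglom limit of Theorem~\ref{lemmaYaglomLimit} and match its Linnik limit with the conditional law from Proposition~\ref{lemmaCBIExcursionStraddlingt}, and then combine with the marginal convergence $g_t(Z_n)\stackrel{d}{\to} g_t(Z)$ from Lemma~\ref{propHittingTimeConvergence} via Skorokhod representation and bounded convergence. The only cosmetic difference is that you work with Laplace-transform test functions in the second coordinate whereas the paper uses general bounded continuous $h_2$.
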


We will establish  limit~\eqref{eq:weak_limit_g_and_Z} in Lemma~\ref{prop:g_t,Z(t)-limit} via  excursion theory and the  Yaglom limit for BGWIs in Theorem~\ref{lemmaYaglomLimit}. We now prove the weak limit in~\eqref{eq:first_weak_limit_in_Thm_5} using Lemma~\ref{prop:g_t,Z(t)-limit}.

Let $z_n\to z$ in $[0,\infty)$ and assume $b_nz_n\in\na$ for all $n\in\na$. Then, by~\cite[Thm~2.1]{MR2225068}, $Z_n^{z_n}\stackrel{d}{\to} Z^z$ and we may assume (by e.g. the Skorokhod representation) that the convergence is in fact almost sure. 
%y by extending the coupling construction $(Z_n,Z)$
%given in Section~\ref{sectionIntroduction} (just before the proof of %Theorem~\ref{theorem_Local_time_conv}) to varying starting points. 
As in the proof of the case  $\lim_{\eps\to 0}\lim_{n\to\infty}\mathrm{II}_{d}=0$ in Section~\ref{sectionProofOverview} above,
it follows that Assumption~\eqref{eq:Skorkhod_lem_ass1} of Lemma~\ref{lemmaSkorokhodSpaceHittingTimeOfOpenSet} holds at $d_0^\eps(Z^z)=T_O(Z^z)$ (where $O=[0,\eps)$) for the CBI process $Z^z$ started at $z\in[0,\infty)$.
By Lemma~\ref{lemmaSkorokhodSpaceHittingTimeOfOpenSet} 
we obtain almost sure convergence $d_0^\eps(Z_n^{z_n})\to d_0^\eps(Z^z)$, and thus for any continuous and bounded $h:[0,\infty)\to\re$ we have 
\begin{equation}
\label{eq:weak_limit_hitting_time_eps}
H_n(z_n)\coloneqq \mathbb{E}(h(d_0^\eps(Z_n^{z_n})))\to\mathbb{E}(h(d_0^\eps(Z^z)))\eqqcolon H(z)
%    \mathbb{P}(s<d_0^\eps(Z_n^{z_n}))\to 
%    \mathbb{P}(s<d_0^\eps(Z^z))
\qquad \text{as $n\to\infty$.}
\end{equation}

For any $0<s_1<s_2<t$, we have $\{g_t(Z_n)<s_1, g^\eps_t(Z_n)<s_2\}=\{g_{s_2}(Z_n)<s_1, t<d_{s_2}^\eps(Z_n)\}$
and 
$\{g_t(Z)<s_1, g^\eps_t(Z)<s_2\}=\{g_{s_2}(Z)<s_1, t<d_{s_2}^\eps(Z)\}$.
Hence~\eqref{eq:first_weak_limit_in_Thm_5} holds once we prove the following weak limit for any $s\in(0,\infty)$:
\begin{equation}
\label{eq:lim_g_d_eps}
(g_s(Z_n),d_s^\eps(Z_n))\stackrel{d}{\to}(g_s(Z),d_s^\eps(Z)) \qquad\text{as $n\to\infty$.}
\end{equation}
To establish~\eqref{eq:lim_g_d_eps},  pick bounded, continuous functions $h_1,h_2:[0,\infty)\to\re$.
The weak limit in~\eqref{eq:weak_limit_g_and_Z} of Lemma~\ref{prop:g_t,Z(t)-limit} and Skorokhod's representation imply that we may assume 
almost sure 
convergence 
$(g_s(Z_n), Z_n(s))\to (g_s(Z), Z(s))$ holds.
By the Markov property at time $s$ and the limit in~\eqref{eq:weak_limit_hitting_time_eps} (for $h=h_2(\cdot+s)$),  we 
obtain the following limit as $n\to\infty$,
\begin{align*}
\mathbb{E}(h_1(g_s(Z_n))h_2(d_s^\eps(Z_n)))&=
\mathbb{E}(h_1(g_s(Z_n))\mathbb{E}_{Z_n(s)}(h_2(d_0^\eps(Z_n)+s)))= \mathbb{E}(h_1(g_s(Z_n)) H_n(Z_n(s)))\\
& \to
\mathbb{E}(h_1(g_s(Z)) H(Z(s)))=\mathbb{E}(h_1(g_s(Z))\mathbb{E}_{Z(s)}(h_2(d_0^\eps(Z)+s)))\\
&=
\mathbb{E}(h_1(g_s(Z))h_2(d_s^\eps(Z))),
\end{align*}
implying~\eqref{eq:lim_g_d_eps} and thus~\eqref{eq:first_weak_limit_in_Thm_5}. 

It remains to establish the limit in~\eqref{eq:weak_limit_g_and_Z} of Lemma~\ref{prop:g_t,Z(t)-limit}, which will imply Theorem~\ref{propositionConvergenceOfPairsdepsdgepsg}.

%\smallskip

\begin{proof}[Proof of Lemma~\ref{prop:g_t,Z(t)-limit}]
 We will  establish~\eqref{eq:weak_limit_g_and_Z} using  Theorem~\ref{lemmaYaglomLimit} and Proposition~\ref{lemmaCBIExcursionStraddlingt} above.
The key insight here is as follows: given
$t-g_t(Z_n)=l$, 
the law of the excursion straddling $t$ equals 
the law of the first excursion $E_n^l$ of $Z_n$ exceeding the length $l\in(0,t)$. 
This well-known fact can be proved directly and simply for the BGWI process $Z_1$ and its scaled continuous-time version  $Z_n\stackrel{d}{=}Z_1(\floor{n\cdot})/b_n$. 
We note that this fact has been  rigorously established for Hunt processes with transition densities (such as $Z$) in Theorem~7.35 of \cite{MR525052}.% (using the exit systems of~\cite{MR0400417}). 

Let $\nu_n^{l_n}$ denote the law of $E_n^{l_n}(l_n)$ and 
note that 
%However, the law of the first $l$ steps of the first excursion of $Z_1$ exceeding length $l$ is that of 
it coincides with the law of $Z_n(l_n)$, conditioned on $d_{1/n}(Z_n)>l_n$.
%(the shift $1/n$ comes from the definition of $d_{1/n}(Z_n)$ in~\eqref{eq_thmLocalTimeLimitTheoremFromMUB22_g_t_d_t} above).
Theorem~\ref{lemmaYaglomLimit} thus implies that for any sequence $l_n\to l$, $\nu_n^{l_n}$ converges weakly to the Linnik law $\nu^l$ 
with Laplace transform given by $\lambda\mapsto 1/(1+\alpha c l \lambda^\alpha)$. 
By Lemma~\ref{propHittingTimeConvergence} we have $t-g_t(Z_n)\stackrel{d}{\to} t-g_t(Z)$ and,
by the Skorokhod representation, we may assume that
$l_n=t-g_t(Z_n)\to t-g_t(Z)=l$ almost surely. 
For arbitrary continuous and bounded functions $h_1,h_2:[0,\infty)\to\re$, we have: 
$\nu_n^{t-g_t(Z_n)}(h_2)\to\nu^{t-g_t(Z)}(h_2)$ almost surely and 
the Dominated Convergence Theorem implies the limit
%we can use the local limit theorem for $g_t(Z_n)$, seen as a variation of Scheff\'e's in \cite[Thm 3.3, p. 30]{MR1700749}, 
\begin{align}
\label{eq:key_lomiot_g_t}
   \mathbb{E}(h_1(g_t(Z_n))\nu_n^{t-g_t(Z_n)}(h_2))
    \to \mathbb{E}(h_1(g_t(Z))\nu^{t-g_t(Z)}(h_2))\qquad\text{as $n\to\infty$,}
\end{align}
where $\nu^l(h_2)$ 
denotes the integral of $h_2$ with respect to the
Linnik law $\nu^l$ and 
$\nu_n^{l_n}(h_2)$  is 
the integral of $h_2$ with respect to  the 
law $\nu_n^{l_n}$ of the first excursion $E_n^{l_n}$ of $Z_n$
exceeding $l_n$, evaluated at $l_n$ (i.e. $\nu_n^{l_n}$ is the law of  the variable $E_n^{l_n}(l_n)$).
Moreover, since
$E_n^{l_n}(l_n)$
has the same law as $Z_n(l_n)$ conditioned on $d_{1/n}(Z_n)>l_n$,
the equality
$ \mathbb{E}(h_1(g_t(Z_n))h_2(Z_n(t)))
    =\mathbb{E}(h_1(g_t(Z_n))\nu_n^{t-g_t(Z_n)}(h_2))$
    holds.
By Proposition~\ref{lemmaCBIExcursionStraddlingt} above, the Linnik law $\nu^l$ equals that of $Z(t)$, given $g_t(Z)=t-l$. 
Thus  
$\mathbb{E}(h_1(g_t(Z))\nu^{t-g_t(Z)}(h_2))=\mathbb{E}(h_1(g_t(Z)) h_2(Z(t)))$, and
by~\eqref{eq:key_lomiot_g_t} we get
\[
\mathbb{E}(h_1(g_t(Z_n))h_2(Z_n(t)))\to
	\mathbb{E}(h_1(g_t(Z)) h_2(Z(t)))\quad\text{as $n\to\infty$.}
\]
Since the functions $h_1$ and $h_2$ were arbitrary,
the weak limit
	%$(g_t(Z_n), Z_n(t))\stackrel{d}{\to} (g_t(Z), Z(t))$
    in~\eqref{eq:weak_limit_g_and_Z} holds, concluding the proof. 
    \end{proof}

\section*{Acknowledgements}
AM is supported by EPSRC grants EP/V009478/1 and EP/W006227/1. GUB is supported by EPSRC grant EP/V009478/1 and UNAM-DGAPA-PAPIIT grant IN110625. BP is supported by the Warwick Statistics PhD award. AM and GUB would like to thank the Isaac Newton Institute for Mathematical Sciences, Cambridge, for support  during the INI satellite programme \textit{Heavy tails in machine learning}, hosted by The Alan Turing Institute, London, and the INI programme \textit{Stochastic systems for anomalous diffusion} hosted at INI in Cambridge, where work on this paper was undertaken. These programmes were supported by EPSRC grant EP/R014604/1.

\appendix
\section{On the tail behaviour of \texorpdfstring{$\mu$}{mu} and \texorpdfstring{$\nu$}{nu} and regular variation of a certain scaling sequence}
\label{appendix_aroundRegularVariation}
In this section, we will prove the  asserted equivalence between \ref{assumption_SL} and the following one when $\alpha\in (0,1)$. 
We will also see why the sequence $a_n$ normalizing the random walk local time $L_1(X_1)$, is regularly varying. 

\subsection{Regular variation of the tails of \texorpdfstring{$\mu$}{mu} and \texorpdfstring{$\nu$}{nu} in Assumptions~\ref{assumption_SL}}
\label{subsec:equivalence_SL_SL'}
Let $\mu$ and $\nu$ be the offspring and immigration distributions associated to $f$ and $g$. 
Consider the following assumption on $\mu$ and $\nu$. 
\begin{namedassumption}{\textbf{(SL')}}\label{assumption_SL'}
%\mbox{}
\begin{enumerate}
\item $\mu$ is %non-trivial (i.e. $\mu(0)>0$) and 
critical (i.e. has mean one); 
\item its tail $\overline \mu$  is regularly varying with index $-(1+\alpha)$; 
\item the following tail balance condition holds:
\[
\frac{k\overline \nu(k)}{\overline \mu(k)}\text{ converges to a limit in $(0,\infty)$ as $k\to\infty$.}
\]
\label{item:ass_SL'(3)}
\end{enumerate}
\end{namedassumption}

\begin{proposition}
\label{porp:A1}
For $\alpha\in (0,1)$,
Assumptions~\ref{assumption_SL} and~\ref{assumption_SL'} are equivalent. In this case $\mu$ and $\nu$ are in the stable domains of attraction with index $1+\alpha$ and $\alpha$, respectively.
\end{proposition}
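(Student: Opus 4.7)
The approach I would take is to invoke Karamata's Tauberian theorem and the monotone density theorem, orchestrated through the generating-function identities that convert between the behaviour near $s=1$ of a probability generating function and the tails of the underlying distribution. Set $U_\mu(s):=\sum_{n\in\na}s^n\overline\mu(n)$, $U_\nu(s):=\sum_{n\in\na}s^n\overline\nu(n)$ and $V_\mu(s):=\sum_{n\in\na}s^n\overline{\overline{\mu}}(n)$, with $\overline{\overline{\mu}}(n):=\sum_{k>n}\overline\mu(k)$; standard manipulations give
\[
1-f(s)=(1-s)U_\mu(s),\quad 1-g(s)=(1-s)U_\nu(s),\quad\text{and}\quad f(s)-s=(1-s)^2 V_\mu(s)
\]
whenever $\mu$ has mean one. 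The sequences $\overline{\overline{\mu}}$ and $\overline\nu$ are non-negative and monotone decreasing, which is precisely the setting in which Karamata's theorem applies.

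For (SL) $\Rightarrow$ (SL'), criticality of $\mu$ follows on differentiating the formula in (SL) and letting $s\uparrow 1$: this yields $f'(1^-)=1$ because $(1-s)^\alpha\mathcal{l}(1-s)\to 0$. Substituting (SL) into the three identities above then gives $V_\mu(s)=c(1-s)^{\alpha-1}\mathcal{l}(1-s)$ and $U_\nu(s)=d(1-s)^{\alpha-1}\mathcal{k}(1-s)$, both regularly varying of positive index $1-\alpha$ at $1^-$. Karamata's Tauberian theorem~\cite{MR898871} applied to these monotone sequences yields
\[
\overline{\overline{\mu}}(n)\sim \frac{c\,\mathcal{l}(1/n)}{\Gamma(1-\alpha)}\,n^{-\alpha}\qquad\text{and}\qquad\overline\nu(n)\sim \frac{d\,\mathcal{k}(1/n)}{\Gamma(1-\alpha)}\,n^{-\alpha}.
\]
The monotone density theorem~\cite{MR898871}, applied to the decreasing tail $\overline\mu$, then gives $\overline\mu(n)\sim \alpha c\,\mathcal{l}(1/n)\,n^{-(1+\alpha)}/\Gamma(1-\alpha)$, i.e.\ regular variation of index $-(1+\alpha)$. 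The tail balance condition $\overline\nu(k)/(k\overline\mu(k))\to d/(\alpha c)=\delta$ then follows at once from the asymptotic $\mathcal{k}/\mathcal{l}\to 1$.

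For (SL') $\Rightarrow$ (SL), I would reverse these steps. Starting from $\overline\mu(n)\sim n^{-(1+\alpha)}L_1(n)$ with $L_1$ slowly varying, Karamata's theorem on tail sums of regularly varying sequences~\cite{MR898871} provides $\overline{\overline{\mu}}(n)\sim n^{-\alpha}L_1(n)/\alpha$; the Abelian direction of Karamata's Tauberian theorem then translates this into $V_\mu(s)\sim \Gamma(1-\alpha)L_1(1/(1-s))/(\alpha(1-s)^{1-\alpha})$, and $f(s)-s=(1-s)^2V_\mu(s)$ yields the (SL)-form of $f$ with $c>0$ arbitrary and $\mathcal{l}(u)=\Gamma(1-\alpha)L_1(1/u)/(\alpha c)$. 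The same recipe applied to $\overline\nu$ produces the (SL)-form of $g$ with some slowly varying $\mathcal{k}$; the tail balance condition constrains the ratio of the slowly varying parts, which can be absorbed in the choice of $c,d>0$ to enforce $\mathcal{k}/\mathcal{l}\to 1$. The stable-domain-of-attraction conclusions then follow from the tail asymptotics together with classical criteria~\cite{MR0270403}.

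The main obstacle is the criticality of $\mu$: because $\sum_n\overline\mu(n)=1$ is finite, $U_\mu(s)\to 1$ as $s\uparrow 1$ and Karamata's theorem cannot be applied to $U_\mu$ directly. The remedy is to pass to $V_\mu$, which does diverge at $s=1$, and to recover the regular variation of $\overline\mu$ only at the end by a monotone-density step. A secondary point is the bookkeeping of slowly varying functions and constants, which is settled by reading off the explicit prefactors $\alpha c/\Gamma(1-\alpha)$ and $d/\Gamma(1-\alpha)$ from the Karamata asymptotics and matching them through the condition $\mathcal{k}/\mathcal{l}\to 1$.
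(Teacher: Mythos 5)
Your proposal is correct and follows essentially the same Tauberian strategy as the paper: pass to the \emph{iterated} tail of $\mu$ (because criticality forces $\sum_n\overline\mu(n)=1<\infty$, so $U_\mu$ stays bounded and is unusable directly), apply a Karamata-type Tauberian theorem, and recover the regular variation of $\overline\mu$ via a monotone-density step. The only cosmetic difference is that you work in the power-series variable $s\uparrow 1$ with the discrete iterated tail $\overline{\overline\mu}(n)=\sum_{k>n}\overline\mu(k)$ and the exact identities $1-f(s)=(1-s)U_\mu(s)$, $f(s)-s=(1-s)^2 V_\mu(s)$, $1-g(s)=(1-s)U_\nu(s)$, whereas the paper passes to $\lambda=-\log s\downarrow 0$ and the continuous iterated tail via the Laplace identity $\int_0^\infty e^{-\lambda x}\int_x^\infty\overline\mu(y)\,dy\,dx=\lambda^{-2}\bigl[\psi(\lambda)-1+\lambda\bigr]$; the two framings are interchangeable, and your identities make the prefactor bookkeeping marginally more transparent. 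One step needs a small repair: you cannot ``differentiate the formula in (SL)'' to obtain criticality, since $\mathcal{l}$ is not assumed differentiable, and even for smooth $\mathcal{l}$ the extra term $(1-s)^{1+\alpha}\mathcal{l}'(1-s)$ appearing in $f'(s)$ need not vanish as $s\uparrow 1$ for a general slowly varying function. The clean argument uses the difference quotient: for a probability generating function one always has $f'(1^-)=\lim_{s\uparrow 1}(1-f(s))/(1-s)$, and by (SL) this equals $\lim_{s\uparrow 1}\bigl(1-c(1-s)^\alpha\mathcal{l}(1-s)\bigr)=1$ because $(1-s)^\alpha\mathcal{l}(1-s)\to 0$ when $\alpha>0$.
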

\begin{proof}
We rely on Tauberian theory, as found in \cite[XIII.5]{MR0270403} or \cite{MR898871}. 
This is simplified by a shift in  perspective from the offspring and immigration generating functions $f$ and $g$ to the Laplace transforms $\psi$ and $\phi$ of $\mu$ and $\nu$. 
They are related by $\psi(\lambda)=\imf{f}{e^{-\lambda}}$ and $\phi(\lambda)=g(e^{-\lambda})$. 
Note that, upon setting $s=e^{-\lambda}$, 
we have $1-s\sim \lambda$ as $\lambda\to 0$.  
In the case of $\mu$, it is convenient to define the tail and  iterated tail of $\mu$ for any $x\in(0,\infty)$ as follows: 
\[
    \overline \mu(x)
    =\imf{\mu}{(x,\infty)}
    =\int_{(x,\infty)}\, \imf{\mu}{dy} 
    \quad\text{and}\quad
    \overline{\overline{\mu}}(x)=\int_x^\infty \overline\mu(y)\, dy. 
\]When $\alpha\in (0,1)$, 
%by Assumptions~\ref{assumption_SL} and~
\cite[Lemma, \S XIII.5, p. 446]{MR0270403} 
%(or the monotone density theorem)
implies the equivalence, as $x\to\infty$, of the following
\[
    \overline{\overline{\mu}}(x)\sim \frac{\mathcal{l}(1/x)}{x^\alpha }
    \quad\text{and}\quad
    \overline\mu(x)\sim \frac{\mathcal{l}(1/x)}{\alpha x^{1+\alpha} },
\]
implying $\alpha \overline{\mu}(x)\sim \overline{\overline{\mu}}(x)/x$.
The tail balance condition 
in~\eqref{item:ass_SL'(3)} of Assumption~\ref{assumption_SL'}
is then equivalent to the existence of the limit $\overline\nu(x)/\overline{\overline{\mu}}(x)$ as $x\to \infty$. 

On the other hand, a direct computation gives
\[
    \int_0^\infty  \overline{\overline{\mu}}(x) e^{-\lambda x} \, dx
    %=\int_{(0,\infty)} (\lambda x-1+e^{-\lambda x}) \, \imf{\mu}{dx}=\lambda -1-\psi(\lambda), 
    =\frac{1}{\lambda^2}\int_0^\infty [e^{-\lambda x}-1+\lambda x]\, \mu(dx)
    =\frac{1}{\lambda^2}[\psi(\lambda)-1+\lambda ], 
\]so that the Tauberian theorem (for densities as in \cite[Thm~XIII.5.4]{MR0270403}), 
gives us the equivalence (for $\alpha\in (0,1)$) of
\[
    \lambda -1-\psi(\lambda)
    %\psi(\lambda)-1+\lambda
    \sim \lambda^{1+\alpha}\mathcal{l}(\lambda)
    \quad\text{and}\quad
   \overline{\overline{\mu}}(x)\sim \frac{\mathcal{l}(1/x)}{\Gamma(1-\alpha) x^{\alpha}}. 
\]%\comment{Upstairs, $\mathcal{l}$ is evaluated at $1/x$!} 
But then, the tail balance condition of \ref{assumption_SL'} and the asymptotic equivalence condition of \ref{assumption_SL} are seen to be equivalent. 

%\comment{What else can be added? Perhaps the cute proof of the Tauberian theorem. It requires Potter's bounds to justify it instead of the sequential compactness used in Feller. }
Of course, when $\alpha\in (0,1)$, either \ref{assumption_SL} or \ref{assumption_SL'}
imply that both $\mu$ and $\nu$ are in the domain of attraction of stable distribution. 
Indeed, 
define $b_n$ implicitly by $b_n^\alpha=n \mathcal{l}(1/b_n)$ 
and compute Laplace transforms: 
\begin{equation}
\begin{split}
    g(e^{-\lambda/b_n})^n
    &=\left[ 1-(1-g(e^{-\lambda/b_n})) \right]^n
    =\left[ 1-d(1-e^{-\lambda/b_n})^\alpha \mathcal{l}(1-e^{-\lambda/b_n}) \right]^n
    \\&\sim \left[ 1-d(\lambda/b_n)^\alpha \mathcal{l}(1/b_n) \right]^n
    \to e^{-d\lambda^\alpha}\quad\text{as $n\to\infty$.} 
    \label{eq:g_conv}
\end{split}
\end{equation} 
A similar argument
%\comment{replicate by hand} 
using the moment generating function $f$ of $\mu$, 
tells us that for the function $\tilde f(s)=f(s)/s$ we have
\begin{equation}
\label{eq:f_conv}
    \tilde f(e^{-\lambda/b_n})^{n b_n}\to e^{c \lambda^{1+\alpha}}\quad\text{as $n\to\infty$.}
    \qedhere
\end{equation}
\end{proof}

\subsection{Behaviour of \texorpdfstring{$\mu$}{mu} and \texorpdfstring{$\nu$}{nu} for \texorpdfstring{$\alpha=1$}{alpha equals 1} in Assumptions~\ref{assumption_SL}}
\label{app:alpha=1}
We identify the cases when the immigration law $\nu$ has a finite mean and the offspring law $\mu$ has a finite variance. Recall
\begin{equation*}
\sum_{n\in\na}n^2\mu(n)=\infty\iff f''(1)=\infty \quad\&\quad 
\sum_{n\in\na}n\nu(n)=\infty\iff g'(1)=\infty.
\end{equation*}
\begin{lemma}\label{lemma_infinite_activity} Under Assumption \ref{assumption_SL}, for any $\alpha\in(0,1]$ we have 
\begin{equation}
  \label{eq:first_equiv}  
f''(1) = \infty \iff 
g'(1) = \infty \iff
%\lim_{x\to0+}{\mathcal{l}}(x)=\infty \iff
%    \lim_{x\to0+}{\mathcal{k}}(x)=\infty \Leftrightarrow 
\text{$\alpha < 1$ or $\lim_{u\downarrow0}{\mathcal{l}}(u) = \infty$.}
\end{equation}
Thus, $\mu$ and $\nu$ have finite variance and mean, respectively, if and only if $\alpha=1$ and   $$\exists \lim_{u\downarrow0}{\mathcal{l}}(u)\in(0,\infty)\quad\text{(or equivalently $\exists\lim_{u\downarrow0}{\mathcal{k}}(u) \in(0, \infty)$).}$$
% $f''(1) = \infty$ if and only if $g'(1) = \infty$. Moreover, $g'(1) = \infty$ if and only if either $\alpha < 1$ or ${\mathcal{l}}(0) = \infty$.
\end{lemma}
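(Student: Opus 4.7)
The plan is to translate the conditions $f''(1^-)=\infty$ and $g'(1^-)=\infty$ (equivalent to $\mu$ having infinite variance and $\nu$ having infinite mean by monotone convergence applied to the power series for $f''$ and $g'$) into explicit limits involving only the slowly varying functions $\mathcal{l}$ and $\mathcal{k}$, by combining the structural form of $f$ and $g$ from Assumption~\ref{assumption_SL} with elementary ``telescoping tail'' identities for probability generating functions. The trichotomy in~\eqref{eq:first_equiv} will then be immediate from three cases: $\alpha<1$, $\alpha=1$ with bounded $\mathcal{l}$, and $\alpha=1$ with $\mathcal{l}(u)\to\infty$.

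For the immigration piece, the identity $\frac{1-g(s)}{1-s}=\sum_{k\geq 0}s^k\overline{\nu}(k)$ (from expanding $1-g(s)=\sum_n\nu(n)(1-s^n)$ and factoring out $1-s$) together with monotone convergence gives
\[
g'(1^-)=\lim_{s\uparrow 1}\frac{1-g(s)}{1-s}=\sum_{k\geq 0}\overline{\nu}(k)=\sum_n n\nu(n).
\]
Simultaneously, Assumption~\ref{assumption_SL} identifies the left-hand side with $du^{\alpha-1}\mathcal{k}(u)$ (setting $u=1-s$), so $g'(1^-)=d\lim_{u\downarrow 0}u^{\alpha-1}\mathcal{k}(u)$, and this limit exists in $(0,\infty]$ since $g'$ is non-decreasing on $[0,1)$. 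It equals $+\infty$ exactly when $\alpha<1$ (because $u^{\alpha-1}\to\infty$ and $\mathcal{k}>0$) or when $\alpha=1$ and $\mathcal{k}(u)\to\infty$.

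Iterating this trick one step handles $f''$. Using criticality $f'(1)=1$ together with $\frac{1-f(s)}{1-s}=\sum_{k\geq 0}s^k\overline{\mu}(k)$ and forming
\[
\frac{1-\tfrac{1-f(s)}{1-s}}{1-s}=\sum_{j\geq 0}s^j\overline{\overline{\mu}}(j),\qquad \overline{\overline{\mu}}(j)\coloneqq\sum_{k>j}\overline{\mu}(k),
\]
monotone convergence and a Fubini swap yield the limit $\sum_{k\geq 1}k\overline{\mu}(k)=\tfrac{1}{2}f''(1^-)$. On the other hand, the structural form gives $\frac{1-f(s)}{1-s}=1-cu^\alpha\mathcal{l}(u)$, so the displayed double ratio equals $cu^{\alpha-1}\mathcal{l}(u)$; hence $f''(1^-)=2c\lim_{u\downarrow 0}u^{\alpha-1}\mathcal{l}(u)$ and the same trichotomy applies. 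Because Assumption~\ref{assumption_SL} imposes $\mathcal{l}(u)/\mathcal{k}(u)\to 1$, the conditions ``$\lim_{u\downarrow 0}\mathcal{l}(u)=\infty$'' and ``$\lim_{u\downarrow 0}\mathcal{k}(u)=\infty$'' coincide, producing the chain of equivalences in~\eqref{eq:first_equiv}; the final characterisation of finite-variance offspring and finite-mean immigration is then the contrapositive in the boundary case $\alpha=1$. The only delicate point is the justification of the two telescoping identities and the interchange of limit and summation, both of which reduce to monotone convergence thanks to nonnegativity of all coefficients, so no Tauberian or monotone-density input is required.
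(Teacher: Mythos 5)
Your proof is correct, and it takes a genuinely different and more elementary route than the paper's. The paper splits into two cases: for $\alpha<1$ it applies Lagrange's mean value theorem to the chord slope of $g$ (resp.\ $f'$); for $\alpha=1$ it differentiates the assumed forms to write $f''(1-x)$ and $g'(x)$ in terms of $\mathcal{l},\mathcal{l}',\mathcal{l}''$ and $\mathcal{k},\mathcal{k}'$, then invokes a nontrivial result on the index of regularly varying functions with monotone derivative (the limit $xH'(x)/H(x)\to\gamma$ for $H(x)=x^\gamma\mathcal{h}(x)$) to kill the derivative terms and conclude $f''(1-x)\sim 2c\,\mathcal{l}(x)$, $g'(x)\sim d\,\mathcal{k}(x)$. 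Your argument sidesteps all of that: the twice-telescoped identity $\frac{1}{1-s}\bigl(1-\frac{1-f(s)}{1-s}\bigr)=\sum_j s^j\overline{\overline{\mu}}(j)$ and the once-telescoped $\frac{1-g(s)}{1-s}=\sum_k s^k\overline{\nu}(k)$ turn the second factorial moment of $\mu$ and the mean of $\nu$ directly into $\lim_{u\downarrow 0}cu^{\alpha-1}\mathcal{l}(u)$ and $\lim_{u\downarrow 0}du^{\alpha-1}\mathcal{k}(u)$ by monotone convergence, uniformly in $\alpha\in(0,1]$, with no differentiation of $\mathcal{l}$ or $\mathcal{k}$ and only the trivial fact that a slowly varying function cannot offset $u^{\alpha-1}\to\infty$. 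This is cleaner and more self-contained; what the paper's version buys is a stronger by-product (the asymptotic equivalences $f''(1-x)\sim 2c\mathcal{l}(x)$ and the vanishing of $x\mathcal{l}'(x)/\mathcal{l}(x)$, $x^2\mathcal{l}''(x)/\mathcal{l}(x)-2$), which are not needed for the lemma itself but could be useful elsewhere. One tiny phrasing nit: the existence of the limit $\lim_{u\downarrow 0}u^{\alpha-1}\mathcal{k}(u)$ in $(0,\infty]$ is most cleanly attributed to the monotonicity in $s$ of the nonnegative-coefficient power series $\sum_k s^k\overline{\nu}(k)$ (equivalently, convexity of $g$ makes the chord slope to $1$ non-decreasing), rather than ``$g'$ non-decreasing''---same content, but worth stating precisely.
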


% \begin{lemma}
% \label{claim_simulation_claim_xdLL_prime}
% Let $\mathcal{l}$ and $\mathcal{k}$ be some  slowly varying functions satisfying Assumption~\ref{assumption_SL}. Then $\lim_{x\to0+} \paren{x
%     \mathcal{l}'(x)}/{\mathcal{l}(x)} = 0$ and $\lim_{x\to0+} \paren{x \mathcal{k}'(x)}/{\mathcal{k}(x)} = 0$.
% \end{lemma}
% \begin{proof}
% Let $F: x \mapsto f(1 - x) - (1 - x)$ and note that $F'(x) = 1 - f'(1-x)$. Since
%     $f'$ is increasing, $F'$ is monotone in the neighbourhood of $0$. By Lemma~\ref{lemma_Lamperti_slow_variation} applied to $F(x) = x^{1+\alpha} \mathcal{l}(x)$ 
%     \begin{equation}\label{eq_lamperti_slowly_varying} 1+\alpha = \lim_{x\to0+} \frac{x F'(x)}{F(x)} = \lim_{x\to0+} \frac{x
%     \paren{x^{1+\alpha} \mathcal{l}(x)}'}{x^{1+\alpha} \mathcal{l}(x)} = 1 + \alpha +
%     \lim_{x\to0+} \frac{x \mathcal{l}'(x)}{\mathcal{l}(x)}. \end{equation}
%     Similarly, considering $G(x) \coloneqq 1 -
%     g(1-x)$, we get $\lim_{x\to0+} \paren{x \mathcal{k}'(x)}/{\mathcal{k}(x)} = 0$.
% \end{proof}

\begin{remark}
As observed in~\cite[p.~142,~Remark]{MR0228077},  it is possible to have $\alpha=1$  with offspring distribution of infinite variance, e.g.~$f(s) \coloneqq s + c (1-s)^2 \paren{1 -  c'\log\paren{1-s}}$
for $c'\in(0,2/3)$  and $0<c'(2-c)<1$. 
Similarly,   for any $d\in(0,1)$, $g(s) \coloneqq 1-d(1-s)(1 - c'
\log(1-s))$ is a  generating function with $g'(1)=\infty$. The corresponding pair of the offspring and immigration laws $\mu$ and $\nu$ with  infinite variance and mean, respectively,   satisfy Assumptions~\ref{assumption_SL} with $\alpha=1$.
\end{remark}

\begin{proof}[Proof of Lemma~\ref{lemma_infinite_activity}]
Assume $\alpha\in(0,1)$. Then, by Lagrange's theorem, for any $s\in(0, 1)$ there exists $\theta_s\in (s, 1)$  such that
$g'(\theta_s) = \frac{1 - g(s)}{1 - s} = d (1 - s)^{\alpha - 1} {\mathcal{k}}(1- s)$.
Hence, since $\mathcal{k}$ is slowly varying at zero, letting $s\uparrow 1$ we see that $g'(1) =\infty$. Similarly, we can show that $f''(1) = \infty$. 

Now assume $\alpha= 1$. By a straightforward computation, we obtain
\begin{equation}\label{lemma_infinite_activity:f_2prime}
f''(1-x) = {\mathcal{l}}(x) \paren{2 c + 4 c \frac{x {\mathcal{l}}'(x)}{{\mathcal{l}}(x)}+ c \frac{x^2
{\mathcal{l}}''(x)}{{\mathcal{l}}(x)}} \ \text{and} \ g'(x) = {\mathcal{k}}(x)\paren{d+ d\frac{x {\mathcal{k}}'(x)}{{\mathcal{k}}(x)}}.
\end{equation} 

The following statement holds by~\cite[Thm~2]{MR0094863}:
% The proof of the lemma relies on the following two results, the first of which can be found in \cite[Theorem~2]{MR94863}.
for any $\gamma>0$ and any slowly varying function $\mathcal{h}$ (at zero), such that the regularly varying function $H(x)\coloneqq x^\gamma \mathcal{h}(x)$ has a derivative which is monotone near $0$, we have
\begin{equation}
\label{eq:slow_vay_H}
    \lim_{x\to0} \frac{x H'(x)}{H(x)} = \gamma.
\end{equation}
% \begin{proof}
% Wlog suppose that $f'$ is non-decreasing. By the mean value theorem, we have for any $x < x_1$
% \[
% f'(y) = \frac{f(x_1) - f(x)}{x_1 - x} \ \ \text{for some $x < y < x_1$}.
% \]
% Fix $c>1$ and let $x_1 := c x$.
% Since $f'$ is non-decreasing, we have
% \[ 
% f'(x) \leq f'(y) = \frac{f(c x) - f(x)}{x(c - 1)} \leq f'(c x).
% \]
% Whence 
% \[ 
% \frac{x f'(x)}{f(x)} \leq \paren{\frac{f(c x)}{f(x)} - 1} \frac{1}{(c - 1)} \leq c x \frac{f'(c x)}{f(cx)} \frac{1}{c} \frac{f(c x)}{f(cx)}.
% \]
% Using the form of the function $f$ we compute that as $x\to0$
% \[ 
% \limsup_{x\to0} \frac{x f'(x)}{f(x)} \leq \frac{c^\alpha - 1}{c - 1} \leq c^{\alpha - 1} \liminf_{x\to0} \frac{x f'(x)}{f(x)}.
% \]
% Finally, letting $c\to 1$ yields the result.
% \end{proof}

To understand the behaviour of $f''$ near $1$, consider  $F(x)  \coloneqq  f(1 - x) - (1 - x)= x^2 \mathcal{l}(x)$ and note that $F'(x) = 1 - f'(1-x)$. Since
$f'$ is increasing, $F'$ is monotone in the neighbourhood of $0$. By~\eqref{eq:slow_vay_H}  
applied to $F$ (with $\gamma =2$ and $\mathcal{h}=\mathcal{l}$), we obtain
\begin{equation}
\label{eq_lamperti_slowly_varying} 
2 = \lim_{x\to0+} \frac{x F'(x)}{F(x)} = \lim_{x\downarrow0} \frac{x
    \paren{x^{2} \mathcal{l}(x)}'}{x^{2} \mathcal{l}(x)} = 2 +
    \lim_{x\downarrow0} \frac{x \mathcal{l}'(x)}{\mathcal{l}(x)},\qquad\text{implying $\lim_{x\downarrow0}\frac{x \mathcal{l}'(x)}{\mathcal{l}(x)}=0$.} 
    \end{equation}
Similarly, applying~\eqref{eq:slow_vay_H}
to $G(x) \coloneqq 1 -
g(1-x)$, we get $\lim_{x\to0+} \paren{x \mathcal{k}'(x)}/{\mathcal{k}(x)} = 0$.

% To understand the behaviour of ${\mathcal{l}}''$, c
It is clear that  $F''(x) = f''(1-x)$ is monotone and, by~\eqref{eq_lamperti_slowly_varying},  $F'$ is proportional at zero to $F(x)/x$ and thus regularly varying with index $\alpha=1$.
 By~\eqref{eq:slow_vay_H} applied to $F'$ (with $\gamma=1$ and $\mathcal{h}(x)=2c\mathcal{l}(x)+cx\mathcal{l}'(x))$ we get
\[ 2 = \lim_{x\downarrow0} \frac{x F''(x)}{F'(x)} = 
    \lim_{x\downarrow0} \frac{2 c x {\mathcal{l}}(x) + 4 c x^2 {\mathcal{l}}'(x) + c x^3 {\mathcal{l}}''(x)}{2 c x {\mathcal{l}}(x) + c x^2 {\mathcal{l}}'(x) } = 1 + 
    \lim_{x\downarrow0} \frac{x^2 {\mathcal{l}}''(x)}{2 {\mathcal{l}}(x)}, \]
    where the last equality holds using the last limit in~\eqref{eq_lamperti_slowly_varying}.
We thus get $\lim_{x\downarrow0} \frac{x^2 {\mathcal{l}}''(x)}{{\mathcal{l}}(x)} = 2$. 

Recall that $c$ and $d$ are positive and that $\mathcal{l}(x)\sim \mathcal{k}(x)$ as $x\to 0$. Then~\eqref{lemma_infinite_activity:f_2prime} implies: 
$ f''(1) = \infty \Leftrightarrow \lim_{x\downarrow0}{\mathcal{l}}(x)=\infty \Leftrightarrow
    \lim_{x\downarrow0}{\mathcal{k}}(x)=\infty \Leftrightarrow g'(1) = \infty$,
    proving~\eqref{eq:first_equiv}.

It follows from~\eqref{eq:first_equiv} that $\nu$ and $\mu$ have finite mean and variance, respectively, if and only if $\alpha=1$ and $\limsup_{u\downarrow0}\mathcal{l}(u)<\infty$. If the latter holds, by~\eqref{lemma_infinite_activity:f_2prime} we have  $\lim_{u\downarrow0}\mathcal{l}(u)=\frac{1}{2c}f''(1)<\infty$, concluding the proof of the lemma.
\end{proof}

\subsection{Regular variation of the sequence \texorpdfstring{$(a_n)$}{(an)} in Theorem~\ref{septupleLimitTheorem}}
\label{subce:a_n_reg_var}
Since the finite-dimensional distributions of $L_1(X_1)(\floor{nt})/a_n$  converge to the non-trivial limit $L$, 
\cite[Thm~2]{MR0138128} 
%or Theorem 2 \cite{MR0503953} 
implies the regular variation of $(a_n)$
with index equal to the self-similarity index of $L(X)$.
We now give two proofs that the latter equals $1+1/\alpha$.

% However, the regular variation of $(a_n)_{n\in\na}$
% can be argued as follows: 
Since $X$ is self-similar of index $\beta=1+\alpha$, the density $f_t$ of $X_t$ satisfies:
\[
    f_t(x)=f_1(xt^{-1/\beta})t^{-1/\beta}. 
\]With this, one sees that the resolvent densities
\[
    u_\lambda(x,y)=\int_0^\infty e^{-\lambda t} f_t(y-x)\, dt
\]are bicontinuous and one can use the occupation times formula for the local time field of $X$ to get that the Laplace exponent of inverse local time is $1/u_\lambda(0,0)$. 
Since
\[
    u_\lambda(0,0)
    =\int_0^\infty e^{-\lambda t} f_t(0)\, dt
    =\int_0^\infty e^{-\lambda t} f_1(0)t^{-1/\beta}\, dt
    =c \lambda^{-(1-1/\beta) },
\]where $c=\Gamma(1-1/\beta)f_1(0)$, we see that the inverse local time at zero of $X$ is a stable subordinator of index $1-1/\beta$. Hence, the local time $L(X)$ is self-similar of index $1/(1-1/\beta)=1+1/\alpha$.

\section{A sequence of BGWIs with a self-similar CBI limit and local times that do not converge to the local time of the CBI}
\label{app:example_non_conv_local_time}

We construct a sequence of critical BGWIs which, although convergent to a self-similar CBI, is such that its sequence of counting local times does not converge to the local time of the  CBI. The construction is based on a BGWI in the domain of attraction (i.e. satisfying Assumption~\ref{assumption_SL}), which we perturb by a single immigrant with large positive probability at each time step, see~\eqref{eq:one_more_immigrant} below. The new immigration law  retains the tails of the original  immigration distribution, while the offspring distribution remains the same along the sequence. Displacing the pre-limit process away from $0$ with positive probability (which tends to one along the sequence of the BGWIs) makes the corresponding scaled BGWIs eventually almost surely not visit zero during  a given compact time interval. 

More specifically, it is easy to see from Taylor's theorem that 
\begin{equation}
\label{eq:gen_functions}
f(s) := s + c (1 - s)^{1 +\alpha}, \quad  g(s) := 1 - d (1 - s)^{\alpha} \quad \&\quad 
g_m(s) := 1 - p_m + p_m s g(s),
\end{equation}
for $s\in(0,1)$,
are generating functions if $\alpha\in(0,1)$ and
\begin{equation*}
%\label{eq:conditions_for_f_g}
d, c (1 + \alpha)\in(0,1)  \ \ \text{and $0<p_m<1$ for all $m\in\na$.} 
\end{equation*}
The immigration generating function  $g$ describes the law of the increment the random walk $Y_1$ in the discrete Lamperti transform in~\eqref{eq_discreteLampertiTransformation_1}. 
The increment of the  immigration random walk $Y^{(m)}$, corresponding to
the  generating function $g_m$,
satisfies:
\begin{equation}
\label{eq:one_more_immigrant}
\text{$\proba{Y^{(m)}(1)=1+Y_1(1)}=p_m$}\qquad\&\qquad \text{$\proba{Y^{(m)}(1)=0}=1-p_m$.}
% \mathbb{P}\paren{\eta_m = 0} = 1- p_m \ \text{ and } \ \mathbb{P}\paren{\eta_m = k} = p_m \mathbb{P}\paren{\eta = k - 1},\ \text{for all $k\geq1$,}
\end{equation}

Let
$Z^{(m)}$
be a discrete-time BGWI process with offspring  $f$ and immigration $g_m$.
Consider its scaled continuous-time extension $Z^{(m)}(\floor{m\cdot})/b_m$, where 
the sequence $(b_m)$ satisfies~\eqref{equationDefbn} as $m\to\infty$.
If $p_m\to1$ as $m\to\infty$,
the sequence $(Z^{(m)}(\floor{m\cdot})/b_m)_{m\geq1}$ converges weakly to the same limit as the sequence $(Z_1(\floor{m\cdot})/b_m))_{m\geq1}$, where 
$Z_1$ is the BGWI in~\eqref{eq:cont_time_bgwi} corresponding to $f$ and $g$ in~\eqref{eq:gen_functions}.
Indeed, by~\cite[Thm~2.1]{MR2225068}, it suffices to check that $m/b_m\to0$  and that the functions $F_{b_m}(\lambda) \coloneqq m \paren{ 1- g_m(1 - \lambda / b_m)}$ converge,
\[
\begin{split}
F_{b_m}(\lambda) & = m p_m (1 - (1 - \lambda / b_m) g(1 - \lambda / b_m)) 
= m p_m (1 - (1 - \lambda / b_m) (1 - d (\lambda / b_m)^\alpha)) \\
& = m p_m (\lambda / b_m + d (\lambda / b_m)^\alpha - d (\lambda / b_m)^{1+\alpha}) 
\to d  \lambda^\alpha, \ \ \text{as $m\to\infty$,} 
\end{split}
\]
 for all $\lambda>0$,
where in both limits we used $b_m^\alpha \sim m$ as $m\to\infty$ from~\eqref{equationDefbn} and $\alpha < 1$. Thus the weak limit of the sequence $(Z^{(m)}(\floor{m\cdot})/b_m)_{m\geq1}$ of BGWIs is a self-similar CBI with $\delta = \frac{d }{\alpha c}$ and marginals given by~\eqref{equationLaplaceTransformOfCBI}. As noted in the introduction, by~\cite[\S5.2.1]{MR3263091}, if $\delta \in (0, 1)$, the limiting process is point recurrent at $0$ with non-degenerate Markov local time $L$. 

\begin{lemma}
\label{lem:final}
Set $p_m\coloneqq 1-m^{-3}$ for $m\geq1$. Let  $L^{(m)}(t)  \coloneqq  \left|\set{k \in\na: Z^{(m)}(k) = 0}\cap [0, t]\right|$ be the counting local time at zero of the BGWI process $Z^{(m)}$ constructed above. Then 
\begin{equation}
\label{eq:final}
\proba{\cup_{k=1}^\infty\cap_{m=k}^\infty\{L^{(m)}(m)=1\}}=1.
\end{equation}
\end{lemma}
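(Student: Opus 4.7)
The plan is a direct application of the first Borel--Cantelli lemma on a probability space supporting the (independent) processes $(Z^{(m)})_{m\ge 1}$, after a union bound over the only mechanism that can produce a zero of $Z^{(m)}$ on $\{1,\dots,\floor{m}\}$.

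First I would extract from~\eqref{eq:gen_functions} the immigration law $\nu_m$ associated with $g_m$: writing $g_m(s)=(1-p_m)+p_m s\, g(s)=\sum_{n\in\na}s^n\nu_m(n)$ gives $\nu_m(0)=1-p_m$ and $\nu_m(n+1)=p_m\nu(n)$ for $n\ge 0$, where $\nu$ is the immigration law of $g$. Equivalently, by~\eqref{eq:one_more_immigrant}, the immigration increment of $Z^{(m)}$ at any given time step equals $0$ with probability $1-p_m=m^{-3}$ and is otherwise at least $1$. Consequently, for any $k\ge 1$, the event $\set{Z^{(m)}(k)=0}$ forces the immigration increment of $Z^{(m)}$ at step $k$ to vanish (otherwise $Z^{(m)}(k)\ge 1$).

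Set $A_m\coloneqq\set{L^{(m)}(m)\ge 2}$. Since $Z^{(m)}(0)=0$ forces $L^{(m)}(m)\ge 1$ deterministically, the event in~\eqref{eq:final} is precisely the complement $(\limsup_{m\to\infty}A_m)^c$. Moreover, on $A_m$ there exists $k\in\set{1,\dots,\floor{m}}$ with $Z^{(m)}(k)=0$, and by the previous paragraph the immigration increment at step $k$ must be $0$. A union bound over $k\in\set{1,\dots,\floor{m}}$ therefore yields
\[
\p(A_m)\le \floor{m}(1-p_m)\le m\cdot m^{-3}=m^{-2}.
\]
Since $\sum_{m\ge 1}m^{-2}<\infty$, the first Borel--Cantelli lemma gives $\p(\limsup_{m\to\infty}A_m)=0$, which is equivalent to~\eqref{eq:final}.

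There is no serious obstacle: the perturbation in~\eqref{eq:one_more_immigrant} is engineered precisely so that a zero immigration increment carries the summable probability $m^{-3}$, which comfortably absorbs the factor of $\floor{m}$ produced by the union bound. The only conceptual point worth stressing is that the immigration perturbation does not affect the weak limit (as already established in the paragraph preceding the lemma), yet it forces the counting local times to degenerate to $1$ along the sequence, exhibiting the promised failure of convergence to the Markov local time of the limiting self-similar CBI.
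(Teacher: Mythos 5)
Your proof is correct and follows essentially the same route as the paper: both proofs rest on the observation that a zero of $Z^{(m)}$ at some $k\in\{1,\dots,m\}$ forces the immigration increment at step $k$ to vanish, and both conclude by summability and the first Borel--Cantelli lemma. The only cosmetic difference is that you bound $\p(L^{(m)}(m)\ge 2)$ directly by a union bound $m(1-p_m)=m^{-2}$, whereas the paper computes the exact probability $p_m^m$ of the event that \emph{all} $m$ immigration increments are positive and then extracts the asymptotics $1-p_m^m\sim m^{-2}$; the union bound is marginally more elementary but yields the same order.
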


Under the assumption of Lemma~\ref{lem:final}, the sequence of counting local times 
$L^{(m)}(\floor{m\cdot})$ at time $1$, equal to $L^{(m)}(m)$, is  eventually equal to one for all large values of  $m$ and hence almost surely converges to $1$  as $m\to\infty$.  Thus, when scaled by any sequence that tends to infinity, $L^{(m)}(m)$ cannot converge weakly to the local time  $L$ of the limiting CBI at time $1$, which is a non-trivial random variable.

\begin{proof}[Proof of Lemma~\ref{lem:final}]  
It is clear from the Lamperti transform in~\eqref{eq_discreteLampertiTransformation_1}
for BGWI $Z^{(m)}$ that the inclusion 
$\{L^{(m)}(m)=1\}^c=\{L^{(m)}(m)>1\}=\{\min_{1 \leq k < m} Z^{(m)}(k) = 0\}\subset A_m^c$ holds, where $$A_m\coloneqq \{\forall k\in\{1,\ldots,m\}:Y^{(m)}(k)-Y^{(m)}(k-1)>0\}.$$
By~\eqref{eq:one_more_immigrant} and our assumption on $p_m$ we have  
$$\proba{A_m}=p_m^m=(1-m^{-3})^m=\left((1-m^{-3})^{m^3}\right)^{m^{-2}}\sim e^{-m^{-2}},\quad\text{as $m\to\infty$,}$$
implying 
$\proba{A_m^c}\sim 1-e^{-m^{-2}}\sim m^{-2}$. 
In particular, $\sum_{m=1}^\infty\proba{A_m^c}<\infty$.
The Borel-Cantelli lemma implies
$\proba{\cap_{k=1}^\infty\cup_{m=k}^\infty\{L^{(m)}(m)=1\}^c}\leq \proba{\cap_{k=1}^\infty\cup_{m=k}^\infty A_m^c}=0$ and~\eqref{eq:final} follows.
\end{proof}

\bibliography{GenBib}
\bibliographystyle{amsalpha}
\end{document}